\theoremstyle{plain}
\newtheorem{theorem}{Theorem}[section]
\newtheorem{lemma}[theorem]{Lemma}
\newtheorem{proposition}[theorem]{Proposition}
\theoremstyle{definition}
\newtheorem{assumption}[theorem]{Assumption}
\newtheorem{definition}[theorem]{Definition}
\theoremstyle{remark}
\newtheorem{remark}{Remark}
\def\bs{}
\newcommand{\p}{\varphi}
\renewcommand{\d}{\partial}
\newcommand{\dd}{\partial}
\renewcommand{\O}{\Omega}
\newcommand\I{I}
\newcommand\QQ{\mathbb{S}}
\newcommand\B{B}
\newcommand\Q{Q}
\newcommand\V{V}
\newcommand\A{A}
\newcommand\Psib{\Psi}
\newcommand\R{\mathbb{R}}
\newcommand\M{\mathbb{M}}
\newcommand\N{\mathbb{N}}
\newcommand\Z{\mathbb{Z}}
\newcommand\taub{\tau}
\newcommand\etab{\eta}
\newcommand\Ma{M}
\newcommand\Ra{R}
\renewcommand\AA{{\mathbf S}}
\newcommand{\HH}{\mathcal{H}}
\newcommand\zetaa{{\boldsymbol \zeta}}
\newcommand\otto{\xrightharpoonup{osc,\gamma}}
\newcommand\stto{\xrightarrow{2,\gamma}}
\newcommand\hx{x}
\renewcommand\boldsymbol{}
\newcommand{\bbv}{b_V} 
\newcommand\ya{y}
\newcommand\ua{u}
\newcommand\phia{\phi} 
\newcommand\ra{r}
\newcommand\wa{w}
\newcommand\na{n}
\newcommand\ca{c}
\newcommand\va{v}
\newcommand\pa{p}
\newcommand\weak{\rightharpoonup}
\renewcommand{\t}{\widetilde}
\renewcommand{\o}{\tilde}
\newcommand\calB{\mathcal B}
\newcommand\calW{\mathcal{W}}
\newcommand\calY{\mathcal{Y}}
\newcommand\ttan{\textrm{tan}}
\newcommand{\SO}[1]{\operatorname{SO}(#1)}
\newcommand\id{I}
\newcommand\sym{\operatorname{sym}}
\newcommand{\so}[1]{\operatorname{so}(#1)}
\newcommand\wto{\rightharpoonup}
\newcommand\wtto{\xrightharpoonup{2,\gamma}}
\newcommand\esssup{\mathop{\operatorname{ess\,sup}}}
\newcommand\ud{\,\mathrm{d}}
 \newcommand\dist{\operatorname{dist}}
\newcommand{\e}{\varepsilon}
\newcommand{\eh}{{\varepsilon{\scriptstyle(h)}}}
\DeclareMathOperator{\curl}{curl}
\DeclareMathOperator{\cof}{cof}
\newcommand\ee{\mathbf e}
\title{Derivation of a homogenized von-K\'arm\'an shell theory from 3D elasticity}
\date{\today}
\author{Peter Hornung\\[1ex]
Max Planck Institute for Mathematics in the Sciences\\
  Inselstr. 22 \\ D-04103 Leipzig\\Germany\\
email: hornung@mis.mpg.de\\[2ex]
Igor Vel\v{c}i\'c\\[1ex]
Max Planck Institute for Mathematics in the Sciences\\
  Inselstr. 22 \\ D-04103 Leipzig\\Germany\\
\newline \newline
  BCAM-Basque Center for Applied Mathematics\\ Alameda de Mazarredo 14\\
48009 Bilbao\\ Bizkaia (Basque-Country, Spain)\\
email: velcic@mis.mpg.de}
\begin{document}

\maketitle

\begin{abstract}
We derive the model of homogenized von K\'arm\'an shell theory,
starting from three dimensional nonlinear elasticity.
The original three dimensional model contains two small parameters:
the oscillations of the material $\e$ and the thickness of the shell $h$.
Depending on the asymptotic ratio of these two parameters,
we obtain different asymptotic theories.
In the case $h\ll\e$ we identify two different asymptotic theories,
depending on the ratio of $h$ and $\e^2$.
In the case of convex shells we obtain a complete picture in the whole
regime $h\ll\e$.

 \vspace{10pt}

 \noindent {\bf Keywords:}
 elasticity, dimension reduction, homogenization, shell theory,
 two-scale convergence.

\end{abstract}

\tableofcontents[1]

\section{Introduction} \label{prvi}

This paper is about von-K\'arm\'an theory for thin elastic shells.
There is a vast literature on shell theory in elasticity. An
overview about the derivation of models for linear and nonlinear shells by the method of formal
asymptotic expansions can be found in \cite{Ciarletshell00}.
In the case of linearly elastic shells, the models thus obtained can also be justified by a rigorous convergence result,
starting from three dimensional linearized elasticity (see also \cite{Ciarlet-Lods-96,Ciarlet-Lods-Miara-96}).
\\
In the last two decades, rigorous justifications of nonlinear models for
rods, curved rods, plates and shells were obtained by means of $\Gamma$-convergence,
starting from three dimensional nonlinear
elasticity. The first papers in that direction are
\cite{AcBuPe91,LDRa95,LDRa96} for the string model, membrane plate and
shell model respectively. The rigorous derivation of
nonlinear bending theory of plate was achieved in \cite{FJM-02}; see also \cite{Pa01} for
an earlier result in this direction. F\"oppl-von K\'arm\'an theory for plates was derived in \cite{FJM-06}.
In \cite{MoMu03,MoMu04}, bending and von K\'arm\'an theories for rods were derived.
In \cite{FJMMshells03} the nonlinear bending theory shell model was derived, and
in \cite{Lewicka1} the von K\'arm\'an shell model was derived.

Here we are interested in an the ansatz-free derivation of a homogenized von K\'arm\'an
shell theory by simultaneous homogenization and dimension reduction.
Our starting point is the energy functional from $3d$ nonlinear elasticity. It attributes to a
deformation $u$ of a given shell $S^h \subset \R^3$ of small thickness $h>0$ around a surface $S \subset \R^3$
the stored elastic energy
\begin{equation}\label{intro:1}
  \frac{1}{h^4\,|S^h|}\int_{S^h}W_\e(x,\nabla \bs u(x))\,dx,\qquad
  \bs u\in H^1(S^h,\R^3).
\end{equation}
Here $W_\e$ is a non-degenerate stored energy function that oscillates periodically
on the surface, with some period $\e\ll 1$. We are interested in the
effective behavior when both the thickness $h$ and the period $\e$ are
small. The separate limits $h\to 0$ and $\e\to 0$ are reasonably well understood:
In \cite{Lewicka1} it is shown that, when $W_{\e}$ does not depend on $\e$, then
the functionals \eqref{intro:1}  $\Gamma$-converge as
$h\to 0$ to a two-dimensional
von K\'arm\'an shell theory. Regarding the limit $\e\to
0$, which is related to homogenization, the first rigorous results relevant in nonlinear
elasticity were obtained by Braides~\cite{Braides-85} and
independently by M\"uller~\cite{Mueller-87}. They proved that, under suitable growth
assumptions on $W_\e$, the energy \eqref{intro:1} $\Gamma$-converges as $\e\to 0$ (and $h$
fixed) to the functional obtained by replacing $W_\e$ in
\eqref{intro:1} with the homogenized energy
density given by an infinite-cell homogenization formula.

In this paper we study the asymptotic behavior when both the
thickness $h$ and the period $\e$ tend to zero \textit{simultaneously}. As
a $\Gamma$-limit we obtain a two-dimensional von K\'arm\'an shell model with
homogenized material properties.
Recently, the von K\'arman plate model (see \cite{NeuVel-12}), the bending
plate models (see \cite{Horneuvel12,Vel13}), and bending rod models (see \cite{Neukamm-10,Neukamm-11}),
were analyzed in this way. Simultaneous homogenization and dimensional reduction was also done in
the case of periodically wrinkled plate (see \cite{Velcic-12}).
As explained there, in these cases one does not obtain infinite-cell homogenization formula like in the membrane case
(see \cite{Braides-Fonseca-Francfort-00,Babadjian-Baia-06}). The basic reason for that is the
fact since we are in small strain regimes, the energy is essentially convex in the strain.
This is the main reason why we can use two scale convergence techniques in all these cases.
However, every case has its own peculiarities. In the von K\'arm\'an theory of plates, one obtains
a limiting quadratic energy density which is continuous in the asymptotic ratio $\gamma$ between $h$ and $\e$, for all $\gamma \in [0,\infty]$.
Moreover, the case $\gamma=0$ corresponds to the situation when the dimensional reduction dominates and the
obtained model is just the homogenized von K\'arm\'an plate model.
The situation $\gamma=\infty$ corresponds to the case when homogenization dominates and the
obtained model is the von K\'arm\'an plate model of the homogenized functional.
The case of bending plate is more involving;
we are able to obtain the models in the case $\gamma \in (0,\infty]$ (see \cite{Horneuvel12}) and in the case
$\gamma=0$ under the additional assumption that $\e^2 \ll h\ll \e$ (see \cite{Vel13}). This model does not correspond to the situation of the homogenized bending plate model, but is the limiting situation of the models when $\gamma \to 0$ and $\gamma>0$.

In case of von K\'arm\'an shell theory studied in the present paper,
we encounter two different scenarios in the regime $h\ll\e$, depending whether $h\sim\e^2$ or $h\ll\e^2$.
Our main result is presented in Theorem \ref{tm:glavni}.
We are not able to cover the case
$h\ll\e^2$ in a generic way for arbitrary reference surfaces $S$. A stronger influence of the geometry of the reference surface $S$ is expected in this case.
In fact, in the case when $S$ is a convex surface, we succesfully derive the limiting model even for the regime $h\ll\e^2$, see Theorem \ref{tm:glavni2}.

Our analysis requires both techniques from {dimension reduction}, in particular, the quantitative rigidity estimate and approximation schemes developed in
\cite{FJM-02,FJM-06}; and techniques from {homogenization methods}, in particular, two-scale
convergence \cite{Nguetseng-89, Allaire-92,Visintin-06,Visintin-07}. To our knowledge our result is the first rigorous result combining homogenization
and dimension reduction for shells in the von K\'arm\'an regime. The homogenization for linearly elastic shells was carried out in \cite{Lutoborski-85}.

This paper is organised as follows: after introducing the setting and basic objects in Section \ref{drugi} and \ref{treci} we state the main result in Section \ref{treci}. In Section \ref{Compactness} we identify the two scale limit of the strain and prove lower bound for $\Gamma$-limit. In Section \ref{peti} we construct the recovery sequences and thus prove the upper bound.
All these results are given for general surfaces and the cases $h \gg \e^2$ or $h \sim \e^2$.
In the last section we analyze the case of convex shells for the situation when $h\ll \e^2$.

\subsection*{Notation}

The notation $A\lesssim B$ means that $A\leq CB$ with $C$ depending only on quantities regarded as constant
in the context in question.

In this paper we
frequently encounter function spaces of periodic functions. We
denote by $\calY$ the real line $\R$ equipped with the torus topology, that is
$y{+}1$ and $y$ are identified in $\calY$. We write $C(\calY)$
to denote the space of continuous functions $f:\,\R\to\R$
satisfying $f(y+1)=f(y)$ for all $y\in\R$. Clearly, $C(\calY)$
endowed with the norm $||f||_\infty:=\sup_{y\in Y}|f(y)|$ is a
Banach space. Moreover, we set $C^k(\calY):=C^k(\R)\cap
C(\calY)$ and denote by $L^2(\calY)$, $H^1(\calY)$ and
$H^1(S{\times}\calY)$ the closure of $C^\infty(\calY)$ and
$C^\infty(\bar S;C^\infty(\calY))$ w.~r.~t. the norm in
$L^2(Y)$, $H^1(Y)$ and $H^1(S{\times} Y)$, respectively. By
$\dot{L}^2 (\calY)$, $\dot{H}^k(\calY)$ we denote the subspace of functions
$H^k(\calY)$ whose mid-value over $\calY$ is zero. Obviously,
all these spaces are Banach spaces.
For $A\subset\R^d$ measurable and $X$ a Banach space, $L^2(A;X)$ is understood in the
sense of Bochner. We tacitly identify the spaces $L^2(A;L^2(B))$ and
$L^2(A\times B)$; since whenever $f\in L^2(A\times B)$, then there
exists a function $\tilde f\in L^2(A;L^2(B))$ with $f=\tilde f$
almost everywhere in $A\times B$.
By $(e_1,e_2,e_3)$ we denote the standard basis on $\R^3$.

\section{Geometric preliminaries and general framework} \label{drugi}

In this subsection we do not always display the explicit regularity assumptions; the minimal requirements are obvious.
We assume that $\omega\subset\R^2$ is a bounded domain with boundary of class $C^3$.
We set $I:=(-\tfrac{1}{2},\tfrac{1}{2})$ and $\Omega^h:=\omega\times(hI)$, and $\Omega:=\omega\times I$.
The variables on $\omega$ (resp. $\Omega$) will be denoted by
$\xi_1,\xi_2$ (resp. $\xi_1, \xi_2, t$).
For a function $f:\Omega \to \R^3$ we define $\nabla_h f:=(\partial_1 f\, , \partial_2 f\, ,\,\tfrac{1}{h} \partial_3 f)$.

Let $S$ be a compact connected oriented surface with boundary which is embedded in $\R^3$.
For convenience we assume that $S$ is parametrized by a single chart: From now on,
$\psi\in C^3(\overline{\omega}; \R^3)$ denotes an embedding 
with $\psi(\omega) = S$. The inverse of $\psi$ is denoted by $\ra: S \to \omega$, and we assume it to be of class $C^3$.
We leave it to the interested reader to verify to which extent these regularity assumptions on $S$ can be weakened without altering our arguments.
\\
The nearest point retraction of a tubular neighbourhood of $S$ onto $S$ will be denoted by $\pi$.
Hence
$$
\pi (\hx + t\na(\hx)) = \hx \quad  \mbox{ whenever $|t|$ is small enough.}
$$
We introduce the basis vectors of the tangent bundle determined by $\psi$, namely the push-forwards
$
\taub_i = \psi_*e_i
$. Explicitly, this means
$$
\taub_i(x) = (\partial_i \psi)(r(x)) \mbox{ for }i = 1, 2\mbox{ and all }x\in S.
$$
By our hypotheses on $\psi$ there exist $\eta_1, \eta_2>0$ such that
\begin{eqnarray} \label{eq:0}
\eta_1 \leq \det ([\taub_1\ \taub_2]^T
[\taub_1\ \taub_2]) \leq \eta_2,\
\|\taub_1\|_{W^{2, \infty}(S)} \leq \eta_2, \|\taub_2\|_{W^{2, \infty}(S)} \leq \eta_2.
\end{eqnarray}
We denote by $(\taub^1(\hx),\taub^2(\hx))$
the dual base to the base
$(\taub_1(\hx),\taub_2(\hx))$, that is,
$$
\tau^i(\hx) = (\dd^i\psi)(\ra(\hx)).
$$
By $\na : S\to\mathbb{S}^2$ we denote the unit
normal, that is,
$$
n(x) = \frac{\tau_1(x)\wedge\tau_2(x)}{|\tau_1(x)\wedge\tau_2(x)|}\mbox{ for all }x\in S.
$$
By $T_{\hx} S = \mbox{span }\{\tau_1(x), \tau_2(x)\}$
we denote the tangent space to $S$ at $\hx$
For each
$\hx \in S$, the vectors $\taub_1(\hx),\taub_2(\hx)$ and $\na(\hx)$ form a
basis of $\R^3$. Its dual basis is $(\taub^1(\hx),\taub^2(\hx),
\na(\hx))$. We define $\taub_3(\hx)=\taub^3(\hx)=\na(\hx)$.

For a subset $A \subset S$ we set
$$
A^h=\{\hx + t\na (\hx); \ \hx \in S,-h/2<t<h/2 \}.
$$
In particular, the shell is given by
$$
S^h=\{\hx+t\na (\hx); \ \hx \in S, -h/2<t<h/2 \}.
$$
We introduce the function $t : S^1\to\R$ by
\begin{equation}
\label{deft}
t(x) = (x - \pi(x))\cdot n(x)\mbox{ for all }x\in S^1.
\end{equation}
By $\ra_e:S^{1}  \to
	\Omega$ we denote the map  (see below why we assume that $\pi$ and $t$ is well-defined on $S^1$)
	$$\ra_e(x)=\ra(\pi(x))+t(x) e_3. $$
Clearly,
	\begin{equation}
	\ra_e^{-1}(\xi_1,\xi_2,t)= \psi(\xi_1,\xi_2) + t \na \big(\psi(\xi_1,\xi_2)\big),
	\end{equation}
	and thus:
	\begin{eqnarray}
	\label{eq:400}\nabla \ra_e^{-1} (\xi_1,\xi_2,t)&=&(\I+t\AA(x))[\taub_1(x),\taub_2(x),\taub_3(x)], \quad \textrm{where } x=\psi(\xi_1,\xi_2) \\
	\label{eq:500}\nabla \ra_e (x)&=&[\taub^1(x),\taub^2(x),\taub^3(x)]^T (\I+t(x)\AA(x))^{-1}.
	\end{eqnarray}

We denote by
$$
T_S(x) := I - n(x)\otimes n(x)
$$
the orthogonal projection from $\R^3$ onto $T_x S$.
We will frequently deal with  vector fields $V : S\to\R^3$ on the surface.
We extend all such vector fields trivially from $S$ to $S^1$, simply by
defining $V(x) = V(\pi(x))$ for all $x\in S^1$.
By $V_{\ttan}$ we denote the projection of vector field $V$ on the tangential space i.e. $V_{\ttan}=T_S V$.
We will denote by $\o{V}$ the corresponding vector field along $\omega$, i.e. we set
$
\o{V}(x) = V(\psi(\xi))\mbox{ for all } \xi\in\omega.
$

The space of quadratic forms on $S$ is denoted by $\QQ$. It consists of all maps $B$ on $S$ such that, for
each $x\in S$, the map
$$
B(x) : T_xS\times T_xS\to\R
$$
is symmetric and bilinear. We will frequently
regard $B$ as a map from $S$ into $\R^{3\times 3}$ via the embedding $\iota$ defined by
$$
\iota(B) = B(T_S, T_S).
$$
On the right-hand side and elsewhere we identify bilinear maps from $\R^3$ into itself with $\R^{3\times 3}$.
By definition, $B(T_S, T_S) : S\to\R^{3\times 3}$ takes the vector fields
$v, w : S\to\R^3$ into the function
$$
B(T_Sv, T_Sw).
$$
By definition, $B\in L^2(S; \QQ)$ means that (using the above embedding) $B\in L^2(S; \R^{3\times 3})$ and $B\in\QQ$.
The spaces $H^1(S; \QQ)$ etc. are defined similarly.
By $\QQ(x)$ we denote the set of all quadratic forms on $T_xS$ which can be embedded in the space $\R^{3\times3}$. By $\QQ(x)_{\sym}$ we denote the set of symmetric quadratic forms on $T_x S$ which can be embedded in $\R^{3 \times 3}_{\sym}$.
\\
For a function $f : S\to\R^3$ we regard its tangential derivative $\nabla_{tan} f(x)$ as a linear map from
$T_x S$ into $\R$. For a tangent vector field $\tau$ along $S$ we write $\dd_{\tau} f = \nabla_{tan} f\ \tau$.
A similar notation applies to vector fields instead of functions.
By $\nabla_{\ttan} \nabla_{\ttan} f$ we denote the triilinear  form $\nabla_{\ttan}{\nabla_{\ttan}} f (\eta_1, \eta_2,\eta_3)= \partial_{\eta_2} \partial_{\eta_3} f \cdot \eta_1$. For scalar $f$, $\nabla_{\ttan} \nabla_{\ttan} f$ is just the bilinear form $\nabla_{\ttan} \nabla_{\ttan} f (\eta_1, \eta_2)=\partial_{\eta_1} \partial_{\eta_2} f$

The Weingarten map $\AA$ on the surface $S$
is given by $\AA = \nabla_{tan} n$, i.e.,
$$
\AA(x)\tau = (\dd_{\tau} n)(x) \mbox{ for all }x\in S,\ \tau\in T_x M.
$$
We extend $\AA$ to a linear map on $\R^3$ by setting $\AA = \AA\ T_S$, i.e., we define $\AA(x)n(x) = 0$.
Moreover, we extend $\AA$ trivially from $S$ to $S^1$, i.e., we have $\AA(x) = \AA(\pi(x))$.
With a slight abuse of notation, we denote by $\AA$ also the (negated) second fundamental form of $S$ defined by
$$
\AA_{ij}(\hx) :=  \AA (\hx) \taub_i(\hx) \cdot\taub_j(\hx). 
$$
In general, for a given bilinear form $\B$ on $S$ we denote its local coordinates by
$$
\B_{ij} := \B \taub_j \cdot \taub_i.
$$
Obviously $\B = \sum_{i,j=1}^2 \B_{ij} \taub^i \otimes \taub^j$.
\\
After rescaling the ambient space, we may assume that the curvature of $S$ is as small as we please.
In particular, we may assume without loss of generality that $\pi$ is
well-defined on a domain containing the closure of $\{\hx+t\na (\hx); \ \hx \in S, -1 < t < 1\}$,
and that
$$
1/2 < |Id + t\AA(\hx)| < 3/2
$$
for all $t\in (-1, 1)$ and all $\hx\in S$.

\begin{lemma}\label{dpi}
For all $x\in S^1$
we have

$$
(\nabla \pi)(x) = T_S(\pi(x)) \left( I + t(x) \AA(\pi(x))\right)^{-1}.
$$
\end{lemma}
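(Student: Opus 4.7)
The plan is to differentiate the defining relation between $x$, $\pi(x)$ and $n(\pi(x))$, and then invert. On $S^1$, the nearest-point property of $\pi$ means that $x-\pi(x)$ is parallel to $n(\pi(x))$, with signed length $t(x)$, so we have the pointwise identity
\begin{equation*}
x = \pi(x) + t(x)\, n(\pi(x)) \quad\text{for all } x\in S^1.
\end{equation*}
Differentiating this with respect to $x$, and using that $\nabla[n\circ\pi](x)=A(\pi(x))\,\nabla\pi(x)$ (since the Weingarten map $A=\nabla_{\tan}n$ is extended by $An=0$, so applies correctly after composition with $\pi$), gives
\begin{equation*}
I = \nabla\pi(x) + n(\pi(x))\otimes\nabla t(x) + t(x)\,A(\pi(x))\,\nabla\pi(x).
\end{equation*}

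Next I compute $\nabla t(x)$. Since $\nabla\pi(x)$ has image in $T_{\pi(x)}S$, we have $n(\pi(x))^{T}\nabla\pi(x)=0$. Differentiating $t(x)=(x-\pi(x))\cdot n(\pi(x))$ and invoking this, together with $A(\pi(x))n(\pi(x))=0$ and $x-\pi(x)=t(x)n(\pi(x))$, all three non-trivial looking terms reduce and one obtains $\nabla t(x)=n(\pi(x))$. Substituting this back into the previous display yields
\begin{equation*}
(I + t(x)\,A(\pi(x)))\,\nabla\pi(x) = I - n(\pi(x))\otimes n(\pi(x)) = T_S(\pi(x)).
\end{equation*}

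Finally I invert $I+tA$. The bound $1/2<|I+tA|<3/2$ stated just before the lemma guarantees invertibility for $|t|<1$, hence on $S^1$. Moreover, in the orthogonal splitting $\R^3=T_{\pi(x)}S\oplus\R\,n(\pi(x))$, both $A(\pi(x))$ and $T_S(\pi(x))$ are block-diagonal (with $An=0$ and $A(T_S)\subset T_S$), so $I+t(x)A(\pi(x))$ and $T_S(\pi(x))$ commute, as do their inverses. Therefore
\begin{equation*}
\nabla\pi(x) = (I+t(x)A(\pi(x)))^{-1} T_S(\pi(x)) = T_S(\pi(x))(I+t(x)A(\pi(x)))^{-1},
\end{equation*}
which is the claim.

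The only mildly delicate step is the computation $\nabla t=n\circ\pi$; once this is in hand everything else is algebraic. No real obstacle here — it is a short computation relying on the tangent/normal orthogonality and the extension convention $An=0$.
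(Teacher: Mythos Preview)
Your proof is correct and follows essentially the same idea as the paper: differentiate the defining relation between $x$, $\pi(x)$, $t(x)$ and $n(\pi(x))$, then invert using the commutativity of $T_S$ and $I+t\AA$. The only organizational difference is that the paper parametrizes points of $S^1$ as $x+tn(x)$ with $x\in S$ and $t$ fixed, and differentiates along a tangential curve (so $\nabla t$ never appears explicitly), whereas you work directly in the $S^1$ variable and compute $\nabla t = n\circ\pi$ as an intermediate step; both routes yield the same identity $(I+t\AA)\nabla\pi = T_S$ (equivalently $\nabla\pi(I+t\AA)=T_S$, since the factors commute).
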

\begin{proof}
Let $x\in S$, let $\tau\in T_x S$
and let $\gamma\in C^1((-1, 1), M)$ with $\gamma(0) = x$ and $\dot{\gamma}(0) = \tau$. Then
$$
\pi(\gamma + sn(\gamma)) = \gamma \mbox{ on }(-1, 1).
$$
Taking the derivative with respect to the arclength of $\gamma$, this implies
$$
(\nabla \pi)(\gamma + tn(\gamma))(\tau + t\AA(\gamma)\tau) = \tau.
$$
As $x\in S$ and $\tau\in T_x S$ were arbitrary, we conclude that
\begin{equation}
\label{dpi-1}
 (\nabla \pi)(x + tn(x))(I + t\AA(x)) = T_S(x)
\end{equation}
on $T_xS$. But by definition $\AA(x)n(x) = 0$, and clearly $(\nabla \pi)(x + tn(x))n(x) = 0$, too. Hence
both sides of \eqref{dpi-1} agree on all of $\R^3$.
\end{proof}

We will frequently extend functions $f : S\to \R$ defined on $S$ only
to functions defined on $S^1$ in the following way:
$$
f(x) = f(\pi(x)) \mbox{ for all }x\in S^1,
$$
with a slight abuse of notation on the left-hand side.
When referring to this extension, we will say that we extend $f$ trivially to $S^1$.
By Lemma \ref{dpi} we have for all $x\in S^1$ the
following formula for the full derivative of $f$ in terms of its tangential derivative:
$$
(\nabla  f)(x) = (\nabla _{tan}f)(\pi(x)) T_S(\pi(x)) (I + t(x) \AA(\pi(x)))^{-1}.
$$
Extending $\nabla_{tan} f$, $T_S$ and $\AA$ trivially from $S$ to $S^1$, as we will do from now on, this formula reads
\begin{equation}
\label{dpi-2}
(\nabla  f)(x) = (\nabla _{tan}f)(x) T_S(x) (I + t(x) \AA(x))^{-1}.
\end{equation}
From now on we tacitly also extend $r$ trivially to $S^1$.

\subsection{Displacements and infinitesimal bendings}

For a given displacement $V : S\to\R^3$ we introduce the quadratic form $(dV)^2$ on $S$ which is defined by
$$
(dV)^2(x)(\tau, \eta) = \dd_{\tau} V(x)\cdot\dd_{\eta} V(x) \mbox{ for all }\tau, \eta\in T_xS.
$$
We also introduce the quadratic form $q_V$ on $S$ which is defined
by its action on tangent vectors $\taub, \etab\in T_{\hx} S$ as follows:
$$
q_V(\hx)(\taub, \etab) = \tfrac{1}{2} \left( \etab\cdot\partial_{\taub} V (\hx)  + \taub\cdot\partial_{\etab} V (\hx) \right)
$$
In the geometry literature, this form is usually denoted by $d\psi\cdot dV$. In local coordinates, it is given by the matrix field
$$
\sym \left( (\nabla\psi^T)\nabla \o{V}\right)
$$
on $\omega$.
Obviously,
\begin{equation}
\label{merd-1}
\sym\left( (\nabla\psi)^T\nabla \o{V}_{tan} \right) = \sym \nabla \bar{V} - \Gamma\cdot \bar{V},
\end{equation}

where $\bar{V}_\alpha=\o{V} \cdot \partial_\alpha \psi$ for $\alpha=1,2$ and
where for brevity we have set $(\Gamma\cdot \bar{V})_{ij} := \sum_{k = 1, 2}\Gamma^k_{ij}\bar{V}_k$.
Here $\Gamma_{ij}^k$ denote the Christoffel symbols of the metric induced by $\psi$. For our purposes it will be enough
to know that $\Gamma\in L^{\infty}(\omega; \R^{2\times 2\times 2})$.
Using \eqref{merd-1} we see that
\begin{equation}
\label{merd-1b}
\sym\left( (\nabla\psi)^T\nabla\o{V} \right) = \sym \nabla \bar{V} - \Gamma\cdot \bar{V} + (\tilde{V}\cdot n)\AA,
\end{equation}
were $\AA$ denotes the pulled back (negated) second fundamental form.
Equivalently, we have the following equality between quadratic forms on $S$:
\begin{equation} \label{eq:identity}
q_V = q_{V_{\ttan}} + (V \cdot \na)\AA.
\end{equation}
It is well-known that the quadratic form $q_V$ typically arises in the context of thin elastic shells,
because it is just the first variation of the metric of $S$ under the displacement $V$.
For example, in \cite{Geymonat-Sanchez-95} it is denoted (in coordinates) by $\gamma_{\alpha\beta}$ and in \cite{Lewicka1} it is denoted
by $\sym\nabla V$.
\\
A displacement $V : S\to\R^3$ is called an {\em infinitesimal bending} of $S$ provided that $q_V = 0$, i.e., that
$$
\sym \left( (\nabla\psi)^T\nabla\o{V} \right)_{ij} = \dd_j\psi\cdot\dd_i\o{V} + \dd_i\psi\cdot\dd_j\o{V} = 0\mbox{ for all }i, j = 1, 2.
$$
Infinitesimal bendings have been studied extensively both in the applied literature (see e.g. \cite{Ciarletshell00}, \cite{Choi-97}, \cite{Geymonat-Sanchez-95}) and in
the geometry literature (see e.g. the references in \cite{Hornung12}).
Recently, they have been found to be relevant as well to fully nonlinear bending theories, cf. \cite{Hornung-CPDE}.
\\
For any displacement $V$,

we define $\mu_V : S\to\R^3$ by setting
\begin{equation}
\label{defmu}
\mu_V = T_S\,\frac{\dd_{\tau_1}V\wedge\tau_2 + \tau_1\wedge\dd_{\tau_2}V}{|\tau_1\wedge\tau_2|}.
\end{equation}
Note that
\begin{equation}
\label{idp-1}
\mu(x)\cdot\tau = - n(x)\cdot\dd_{\tau}V(x) \mbox{ for all }\tau\in T_xS.
\end{equation}
In fact, we compute
\begin{align*}
\mu\cdot\tau &= \frac{1}{|\tau_1\wedge\tau_2|}\left( \dd_{\tau_1}V\cdot\tau_2\wedge\tau - \dd_{\tau_2}V\cdot\tau_1\wedge\tau\right).
\end{align*}
Hence $\mu\cdot\tau_i = - n\cdot\dd_{\tau_i}V$ for $i = 1, 2$. This proves \eqref{idp-1}.
\\
For a given displacement $V : S\to\R^3$ we define $\Omega_V : S\to\R^{3\times 3}$ by
\begin{equation}
\label{defO}
\O_V = \nabla_{tan}V\ T_S + \mu_V\otimes n.
\end{equation}
\begin{lemma}
\label{irre}
If $V\in H^1(S; \R^3)$ then
$
\sym\O_V = q_V\left( T_S, T_S \right)
$
almost everywhere on $S$.
\end{lemma}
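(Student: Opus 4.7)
My plan is to verify the identity pointwise, treating both sides as symmetric bilinear forms on $\R^3$ (via the embedding $\iota$). Since $V\in H^1(S;\R^3)$, all derivatives are defined a.e., and the claim becomes an algebraic identity at a.e.~point of $S$.

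The first step is to evaluate $\Omega_V$ on an arbitrary vector $v\in\R^3$. Decomposing $v = T_S v + (v\cdot n) n$, the definition \eqref{defO} gives
\begin{equation*}
\Omega_V v = (\nabla_{\ttan} V)\,T_S v + (v\cdot n)\,\mu_V = \dd_{T_S v}V + (v\cdot n)\,\mu_V,
\end{equation*}
since $n\otimes n$ picks out the normal component of $v$ and $T_S v$ is tangent. Given a second vector $w\in\R^3$, applying the same decomposition yields
\begin{equation*}
2\,\sym\Omega_V(v,w) = w\cdot\dd_{T_S v}V + v\cdot\dd_{T_S w}V + (v\cdot n)(w\cdot\mu_V) + (w\cdot n)(v\cdot\mu_V).
\end{equation*}

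Next I would exploit that $\mu_V$ is tangent by construction (the leading $T_S$ in \eqref{defmu}), so $w\cdot\mu_V = T_S w\cdot\mu_V$ and similarly for $v$. The identity \eqref{idp-1} then converts these into normal components of derivatives of $V$:
\begin{equation*}
w\cdot\mu_V = -n\cdot\dd_{T_S w}V, \qquad v\cdot\mu_V = -n\cdot\dd_{T_S v}V.
\end{equation*}
Simultaneously, I would expand $w\cdot\dd_{T_S v}V = T_S w\cdot\dd_{T_S v}V + (w\cdot n)\,n\cdot\dd_{T_S v}V$ and likewise for the symmetric term. Substituting everything back, the contributions proportional to $(v\cdot n)$ and $(w\cdot n)$ cancel exactly, leaving
\begin{equation*}
2\,\sym\Omega_V(v,w) = T_S w\cdot\dd_{T_S v}V + T_S v\cdot\dd_{T_S w}V = 2\,q_V(T_S v, T_S w),
\end{equation*}
which is precisely $2\,q_V(T_S,T_S)(v,w)$ under the embedding $\iota$. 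Hence $\sym\Omega_V = q_V(T_S,T_S)$ a.e.

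There is no real obstacle: the content of the lemma is already encoded in \eqref{idp-1}, and the only care required is tracking the tangential/normal decomposition so as to see the normal cross-terms cancel. The estimate \eqref{eq:0} together with $V\in H^1$ ensures all quantities are well-defined elements of $L^2$, so the pointwise identity extends to the a.e.~statement.
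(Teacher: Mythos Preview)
Your proof is correct and follows essentially the same approach as the paper: both rely on the tangential/normal decomposition and on \eqref{idp-1} to produce the cancellation of the mixed terms. The only difference is organizational---the paper checks the three blocks ($n$-$n$, $\tau$-$n$, $\tau$-$\sigma$) separately, while you handle them simultaneously by decomposing arbitrary vectors $v,w$; the computations coincide line for line once unpacked.
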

\begin{proof}
Clearly $n\cdot \O_V n = n\cdot\mu_V = 0$ and for any tangent vector field $\tau$ along $S$ we have
$$
\tau\cdot \O_V n + n\cdot \O_V \tau = \tau\cdot\mu_V + n\cdot\dd_{\tau}V = 0
$$
by \eqref{idp-1}. For any tangent vector field $\sigma$ we have
$$
\tau\cdot \O_V\sigma + \sigma\cdot \O_V\tau = 2q_V(\sigma, \tau).
$$
\end{proof}

If $V$ is an infinitesimal bending, then
\begin{equation}
\label{form-2}
\O_V^2(T_S, T_S) = -(dV)^2(T_S, T_S),
\end{equation}
that is, $\O_V^2\left( \tau, \sigma \right) = -\dd_{\tau} V\cdot\dd_{\sigma} V$
for all tangent vector fields $\tau$, $\sigma$ along $S$.
\\
In fact, by skew symmetry, $\O_V^2\left( \tau, \sigma \right) = -\O_V\tau\cdot\O_V\sigma$, and
$\dd_{\tau} V = \O_V\tau$.

For any displacement $V$, the {\em linearised Weingarten map} $b_V(x)$ is the linear map on $T_x S$ given by
\begin{equation}
\label{linwein}
b_V = \nabla_{tan}V \AA -\nabla_{tan}\mu_V.
\end{equation}
An infinitesimal bending $V$ determines a linearized second fundamental form $b_V$, which can be regarded as the first order change of the second
fundamental form of the surface $\psi$ under the displacement $V$.
In coordinates, the (negated)
linearized second fundamental form of $V$ is given by
\begin{equation}
\label{defbV}
(b_V)_{ij} = n\cdot (\dd_i\dd_j\o{V} - \Gamma_{ij}^k\dd_k\o{V}),
\end{equation}

 cf. \cite{Hornung12} and
the references therein. The linearized second fundamental form also occurs e.g. in the analysis in \cite{Geymonat-Sanchez-95}.

The following lemma justifies our use of the symbol $b_V$ here.

\begin{lemma}
If $V$ is an infinitesimal bending, then $\dd_i\psi\cdot b_V\dd_j\psi = (b_V)_{ij}$.
\end{lemma}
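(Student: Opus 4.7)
The strategy is to show both sides of the identity reduce to $n(x)\cdot D^2V(x)[\taub_i,\taub_j]$, where $V$ is understood via its trivial extension from $S$ to $S^1$. The essential feature of this extension, $V(x)=V(\pi(x))$, is that $V$ is constant along normal rays, so $\partial_n V\equiv 0$.

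First I would reduce $(b_V)_{ij}$ from its coordinate definition \eqref{defbV}. Writing $\o{V}=V\circ\psi$ and applying chain and product rules,
\begin{equation*}
\partial_j\partial_i\o{V}=D^2V[\taub_i,\taub_j]+DV\cdot\partial_j\partial_i\psi.
\end{equation*}
Substituting the Gauss decomposition $\partial_j\partial_i\psi=\Gamma_{ij}^k\taub_k-\AA_{ij}n$ and using $\partial_n V=0$ gives $n\cdot\partial_j\partial_i\o{V}=n\cdot D^2V[\taub_i,\taub_j]+\Gamma_{ij}^k\,n\cdot\partial_{\taub_k}V$, so the subtraction in \eqref{defbV} cancels the Christoffel contribution and produces $(b_V)_{ij}=n\cdot D^2V[\taub_i,\taub_j]$. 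Notice that this step does not invoke the bending hypothesis.

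Next I would expand $\taub_i\cdot b_V\taub_j$ directly from the intrinsic definition \eqref{linwein}. Because $\AA\taub_j=\partial_{\taub_j}n$ is tangent to $S$, the infinitesimal bending condition $q_V\equiv 0$ applied to the pair $(\taub_i,\partial_{\taub_j}n)$ forces
\begin{equation*}
\taub_i\cdot\partial_{\AA\taub_j}V=-\partial_{\taub_j}n\cdot\partial_{\taub_i}V,
\end{equation*}
and this is the single point at which the bending hypothesis genuinely enters. For the other term $\taub_i\cdot\partial_{\taub_j}\mu_V$, I would differentiate the identity $\mu_V\cdot\taub_i=-n\cdot\partial_{\taub_i}V$ from \eqref{idp-1} along $\taub_j$ and solve for it, producing contributions that involve $\partial_{\taub_j}\taub_i=\partial_j\partial_i\psi$ and the full Hessian of $V$.

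Adding the two contributions, the cross term $\partial_{\taub_j}n\cdot\partial_{\taub_i}V$ cancels immediately, leaving $n\cdot\partial_{\taub_j}\partial_{\taub_i}V+\mu_V\cdot\partial_{\taub_j}\taub_i$. Expanding the first summand by the product rule into $n\cdot D^2V[\taub_i,\taub_j]+n\cdot\partial_{\partial_{\taub_j}\taub_i}V$ and then inserting the Gauss decomposition of $\partial_{\taub_j}\taub_i$ in both summands, the $\AA_{ij}$ contribution vanishes (from $\partial_n V=0$ together with $\mu_V\perp n$, which follows from the projection $T_S$ in \eqref{defmu}), while the Christoffel pieces cancel between the two summands via $n\cdot\partial_{\taub_k}V=-\mu_V\cdot\taub_k$. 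What remains is exactly $n\cdot D^2V[\taub_i,\taub_j]$, matching step one and proving the claim. The main bookkeeping obstacle is to keep separate the honest Hessian $D^2V$ from the commutator between the directional derivative $\partial_{\taub_j}$ and the spatially varying vector field $\taub_i$; once that is done, all the algebra is routine.
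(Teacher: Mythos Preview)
Your proof is correct. The organization differs from the paper's but the underlying computation is the same.

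The paper works entirely in the pulled-back coordinate picture on $\omega$: it expands $\partial_i\psi\cdot b_V\partial_j\psi$ directly, manipulates via the product rule and the identity $n\cdot\partial_iV + \mu\cdot\partial_i\psi = 0$, and arrives at $(b_V)_{ij} - h_j^k(\partial_iV\cdot\partial_k\psi + \partial_i\psi\cdot\partial_kV)$; the bending hypothesis is invoked only at the last line to kill this symmetric remainder. Your route instead introduces the trivially extended $V$ and the ambient Hessian $D^2V$, identifies $n\cdot D^2V[\tau_i,\tau_j]$ as a common target, and reduces each side to it separately; the bending hypothesis enters at the $\nabla_{\ttan}V\,\AA$ term rather than at the end. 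Your packaging is a bit more conceptual (it isolates the covariant normal Hessian as the invariant object both formulas compute), at the cost of bringing in the extension to $S^1$; the paper's version stays purely two-dimensional. Either way the same three ingredients---\eqref{idp-1}, the Gauss decomposition of $\partial_i\partial_j\psi$, and $q_V=0$---do all the work.
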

\begin{proof}
We write $\mu$, $V$, $n$ etc. instead of $\t\mu_V$, $\t V$, $\t n$, and the coordinates of the second fundamental form
are denoted $h_{ij}$, and we use the common convention regarding the raising and lowering of indices.
By definition of the linearised Weingarten map, we have
\begin{align*}
b_V\dd_j\psi &=
- \dd_j\mu + \nabla_{\dd_j n}V
= - \dd_j\mu - h_j^k \dd_k V.
\end{align*}
Hence using $n\cdot\dd_i V + \mu\cdot\d_i\psi = 0$ (which follows from $n\cdot\d_i\psi = 0$), we see
\begin{align*}
\dd_i\psi\cdot b_V\dd_j\psi &=
- \dd_i\psi\cdot \dd_j\mu - h_j^k \dd_i\psi\cdot\dd_k V
\\
&= - \dd_j (\dd_i \psi\cdot \mu) + \dd_j\dd_i\psi\cdot\mu - h_j^k \dd_i\psi\cdot\dd_k V
\\
&= \dd_j (\dd_i V\cdot n) + \Gamma_{ij}^k\d_k\psi\cdot\mu - h_j^k \dd_i\psi\cdot\dd_k V
\\
&= \dd_j\dd_i V\cdot n + \dd_i V\cdot \dd_j n - \Gamma_{ij}^k\d_k V\cdot n - h_j^k \dd_i\psi\cdot\dd_k V
\\
&= n\cdot\left( \dd_j\dd_i V - \Gamma_{ij}^k\d_k V  \right) - h_j^k \dd_i V\cdot \dd_k \psi - h_j^k \dd_i\psi\cdot\dd_k V
\end{align*}
This indeed agrees with $(b_V)_{ij}$ as defined in \eqref{defbV}, because the last terms cancel by the definition of infinitesimal bendings.
\end{proof}

We will frequently need the following diffeomorphism $\Phi^h : S^h \to S^1$:
$$
\Phi^h(x) = \pi(x) + \frac{t(x)}{h}n(x).
$$

The following lemma summarizes a computation that will later be used for the generic type of ansatz functions.

\begin{lemma}
\label{ansatz}
Let $h\in (0, 1/2)$, let $V\in H^2(S; \R^3)$, and for $x\in S^h$ define
$$
\rho(x) = V(x) + t(x)\mu_V(x).
$$
Then the following equality holds on $S^h$:
\begin{equation}
\label{Drho}
\begin{split}
\nabla\rho = \O_V &- t b_V(T_S, T_S) - t^2\nabla_{tan}\mu_V \AA
\\
& + \left( \nabla_{tan}V + t\nabla_{tan}\mu_V \right)T_S
\left( \left(\I + t\AA\right)^{-1} - \left( \I-t\AA \right) \right),
\end{split}
\end{equation}
where we extend $V$, $\mu_V$, $\O_V$, $b_V$, $\nabla_{tan} V$ etc. trivially from $S$ to $S^h$.
\end{lemma}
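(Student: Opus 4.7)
I would prove the identity by direct differentiation and then isolate the terms matching the right-hand side using the expansion $(I+t\AA)^{-1}=(I-t\AA)+[(I+t\AA)^{-1}-(I-t\AA)]$. Since $\rho(x)=V(x)+t(x)\mu_V(x)$ is a product of a scalar function and a vector field, the chain rule gives
\begin{equation*}
\nabla\rho=\nabla V+\mu_V\otimes\nabla t+t\,\nabla\mu_V.
\end{equation*}
The first task is to show $\nabla t=n$ (with $n$ extended trivially to $S^1$). This follows from the defining identity $x=\pi(x)+t(x)n(\pi(x))$: differentiating yields $I=(I+t\AA)\nabla\pi+\nabla t\otimes n$, and contracting with $n$ together with $\nabla\pi\,n=0$ (Lemma \ref{dpi}) and $\AA n=0$ delivers $\nabla t=n$.

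Next I would apply formula \eqref{dpi-2} to the trivially extended fields $V$ and $\mu_V$, giving
\begin{equation*}
\nabla V=(\nabla_{tan}V)\,T_S(I+t\AA)^{-1},\qquad \nabla\mu_V=(\nabla_{tan}\mu_V)\,T_S(I+t\AA)^{-1}.
\end{equation*}
Substituting these back into the expression for $\nabla\rho$ we obtain
\begin{equation*}
\nabla\rho=\bigl(\nabla_{tan}V+t\nabla_{tan}\mu_V\bigr)T_S(I+t\AA)^{-1}+\mu_V\otimes n.
\end{equation*}
The key algebraic step is to split $(I+t\AA)^{-1}$ as $(I-t\AA)+\bigl[(I+t\AA)^{-1}-(I-t\AA)\bigr]$. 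The error piece will supply exactly the last term in \eqref{Drho}, so I only need to verify that the contribution from $(I-t\AA)$ reproduces $\O_V-tb_V(T_S,T_S)-t^2\nabla_{tan}\mu_V\AA$.

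Expanding, the $(I-t\AA)$ part contributes
\begin{equation*}
(\nabla_{tan}V)T_S+\mu_V\otimes n\;-\;t(\nabla_{tan}V)T_S\AA\;+\;t(\nabla_{tan}\mu_V)T_S\;-\;t^2(\nabla_{tan}\mu_V)T_S\AA.
\end{equation*}
Using $T_S\AA=\AA$ (because $\AA$ maps into $T_S$), the first two terms are $\O_V$ by \eqref{defO}, the last term is $-t^2\nabla_{tan}\mu_V\AA$, and the middle pair equals $-t\bigl(\nabla_{tan}V\AA-\nabla_{tan}\mu_VT_S\bigr)$, which after extending trivially on the normal direction is precisely $-tb_V(T_S,T_S)$ by \eqref{linwein}. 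Collecting everything yields \eqref{Drho}.

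The main obstacle is purely bookkeeping: the correct placement of $T_S$ on the right of the various tangential gradients and the identification of the linear map $\nabla_{tan}V\AA-\nabla_{tan}\mu_VT_S$ (extended by zero on $n$) with $b_V(T_S,T_S)$ under the embedding $\iota$. Everything else is linear algebra applied term by term, so no estimates or limiting arguments are needed.
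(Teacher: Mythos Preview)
Your proof is correct and follows essentially the same route as the paper: compute $\nabla\rho$ via the chain rule and \eqref{dpi-2}, then split $(I+t\AA)^{-1}=(I-t\AA)+Q$ and identify the resulting terms with $\O_V$ and $b_V$ using their definitions. The paper simply asserts $\nabla t=n$ without your short derivation, but otherwise the argument is line-for-line the same.
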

\begin{proof}
For all $x\in S^h$ define
\begin{equation}
\label{defQ}
Q(x) = (\I+t(x)\AA(x))^{-1} - \left( \I-t(x)\AA(x) \right).
\end{equation}
Since clearly $\nabla t = n$, formula \eqref{dpi-2} shows that on $S^h$:
\begin{align*}
\nabla\rho &= \left( \nabla_{tan}V + t\nabla_{tan}\mu_V \right) T_S \left(I + t\AA\right)^{-1}
+ \mu_V\otimes n
\\
&= \left( \nabla_{tan}V + t\nabla_{tan}\mu_V \right) T_S \left(I - t\AA\right)
+ \mu_V\otimes n + \left( \nabla_{tan}V + t\nabla_{tan}\mu_V \right) T_S Q
\\
&= \nabla_{tan}V + \mu_V\otimes n - t\nabla_{tan}V\AA + t\nabla_{tan}\mu_V T_S
- t^2\nabla_{tan}\mu_V\AA + \left( \nabla_{tan}V + t\nabla_{tan}\mu_V \right) T_S Q.
\end{align*}
By the definition of $\Omega_V$ and $b_V$ this is the claim.
\end{proof}

\section{Elasticity framework and main result} \label{treci}

Throughout this paper we assume that the limit
$$
\gamma := \lim_{h \to 0} \tfrac{h}{\eh}
$$
exists in $[0,\infty]$. We will frequently write $\e$ instead of $\eh$, but always with the understanding that $\e$ depends on $h$ via $\gamma$.

\begin{definition}[nonlinear material law]\label{def:materialclass}
  Let $0<\alpha\leq\beta$ and $\rho>0$. The class
  $\calW(\alpha,\beta,\rho)$ consists of all measurable functions
  $W\,:\,\R^{3 \times 3}\to[0,+\infty]$ that satisfy the following properties:
  \begin{align}
    \tag{W1}\label{ass:frame-indifference}
    &W\text{ is frame indifferent, i.e.}\\
    &\notag\qquad W(\Ra\boldsymbol F)=W(\boldsymbol F)\quad\text{ for
      all $\boldsymbol F\in\R^{3 \times 3}$, $\boldsymbol R\in\SO
      3$;}\\
    \tag{W2}\label{ass:non-degenerate}
    &W\text{ is non degenerate, i.e.}\\
    &\notag\qquad W(\boldsymbol F)\geq \alpha\dist^2(\boldsymbol F,\SO 3)\quad\text{ for all
      $\boldsymbol F\in\R^{3 \times 3}$;}\\
    &\notag\qquad W(\boldsymbol F)\leq \beta\dist^2(\boldsymbol F,\SO 3)\quad\text{ for all
      $\boldsymbol F\in\R^{3 \times 3}$ with $\dist^2(\boldsymbol F,\SO 3)\leq\rho$;}\\
    \tag{W3}\label{ass:stressfree}
    &W\text{ is minimal at $\id$, i.e.}\\
    &\notag\qquad W(\id)=0;\\
    \tag{W4}\label{ass:expansion}
    &W\text{ admits a quadratic expansion at $\id$, i.e.}\\
    &\notag\qquad W(\id+\boldsymbol G)=\mathcal{Q}(\boldsymbol G)+o(|\boldsymbol G|^2)\qquad\text{for all }\boldsymbol G\in\R^{3 \times 3}\\
    &\notag\text{where $\mathcal{Q} \,:\,\R^{3 \times 3}\to\R$ is a quadratic form.}
  \end{align}
\end{definition}

\begin{definition}[admissible composite material]\label{def:composite}
  Let $0<\alpha\leq\beta$ and $\rho>0$. We say
  \begin{equation*}
    W:S^{1}\times\R^2\times \R^{3 \times 3}\to \R^+\cup\{+\infty\}
  \end{equation*}
  describes an admissible composite material of class $\mathcal W(\alpha,\beta,\rho)$ if
  \begin{enumerate}[(i)]
  \item $W$ is almost everywhere equal to a Borel function on $S^{1} \times\R^2\times\R^{3 \times 3}$,
  \item $W(\cdot,y,\boldsymbol F)$ is continuous for almost every $y\in\R^2$ and $\boldsymbol F\in\R^{3 \times 3}$,
  \item $W(x,\cdot,\boldsymbol F)$ is $Y$-periodic for all $x\in\Omega$ and almost every $\boldsymbol F\in\R^{3 \times 3}$,
  \item $W(x,y,\cdot)\in\mathcal W(\alpha,\beta,\rho)$ for all $x\in S^{1}$ and almost every $y\in\R^2$.

  \end{enumerate}
\end{definition}
\begin{assumption}
  \label{ass:main}
  We assume that
  \begin{itemize}
  \item $W$ describes an admissible composite material of class $\mathcal
    W(\alpha,\beta,\rho)$ in the sense of Definition~\ref{def:composite}.
  \item $\mathcal{Q}$ is the quadratic energy density associated to $W$
    through expansion \eqref{ass:expansion} in Definition~\ref{def:materialclass}.
  \item The following uniformity is valid
  $$ \lim_{G \to 0} \esssup_{(x,y) \in S^1 \times \calY} \frac{|W(x,y,I+G)-\mathcal{Q}(x,y,G)|}{|G|^2}=0. $$
  \end{itemize}
\end{assumption}

We collect some basic properties of admissible $W$ and the associated quadratic forms $\mathcal{Q}$;
a proof can be found in \cite[Lemma~2.7]{Neukamm-11}.

\begin{lemma}
  \label{lem:1}
  Let $W$ and $\mathcal{Q}$ satisfy the assumption (\ref{ass:main}). Then
  \begin{enumerate}[(i)]
  \item[(Q1)] $\mathcal{Q}(\cdot,y,\cdot)$ is continuous for almost every $y\in\R^2$,
  \item[(Q2)] $\mathcal{Q}(x,\cdot,\boldsymbol G)$ is $Y$-periodic
      and measurable for all $x\in S^{1}$ and all
      $\boldsymbol F\in\R^{3 \times 3}$,
  \item[(Q3)] for all $x\in S^{1}$ and almost every
      $y\in\R^2$ the map $\mathcal{Q}(x,y,\cdot)$ is quadratic and
      satisfies
    \begin{equation*}
      \alpha|\sym \boldsymbol G|^2\leq \mathcal{Q}(x,y,\boldsymbol G)=\mathcal{Q}(x,y,\sym \boldsymbol G)\leq \beta|\sym \boldsymbol G|^2\qquad\text{for all $ \boldsymbol G\in\R^{3 \times 3}$.}
    \end{equation*}
  \end{enumerate}
  Furthermore,  there exists a monotone function $m:\R^+\to\R^+\cup\{+\infty\}$ such that $m(\delta)\to 0$ as
  $\delta\to 0$ and
  \begin{equation}\label{eq:94}
    \forall \boldsymbol G\in\R^{3 \times 3}\,:\,|W(x,y,\id+\boldsymbol G)-\mathcal{Q}(x,y,\boldsymbol G)|\leq|\boldsymbol G|^2m(|\boldsymbol G|)
  \end{equation}
  for all $x\in S^{1}$ and almost every $y\in\R^2$.
\end{lemma}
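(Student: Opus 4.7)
The plan is to derive every claim from the uniform quadratic expansion
$$
\lim_{G\to 0}\esssup_{(x,y)\in S^1\times\calY}\frac{|W(x,y,I+G)-\mathcal{Q}(x,y,G)|}{|G|^2}=0,
$$
which is the crucial strengthening of (W4) furnished by Assumption \ref{ass:main}. In particular $\mathcal{Q}$ may be written as $\mathcal{Q}(x,y,G)=\lim_{t\to 0}t^{-2}W(x,y,I+tG)$, with the limit uniform in $(x,y)$ and uniform on $|G|\leq R$ for each $R$.

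Then (Q2) is immediate: for each fixed $G$, $\mathcal{Q}(x,\cdot,G)$ is a pointwise limit of the $\calY$-periodic, measurable maps $y\mapsto t^{-2}W(x,y,I+tG)$, so it inherits periodicity and measurability. For (Q1), the maps $(x,G)\mapsto t^{-2}W(x,y,I+tG)$ are continuous for a.e.\ $y$ (by (ii) of Definition \ref{def:composite}), and the uniform convergence transmits this continuity to $\mathcal{Q}(\cdot,y,\cdot)$. For (Q3), the quadratic nature of $\mathcal{Q}(x,y,\cdot)$ is already built into (W4). The identity $\mathcal{Q}(x,y,G)=\mathcal{Q}(x,y,\sym G)$ follows from frame indifference: since $R(t):=\exp(-t\,\skw G)\in\SO 3$ and $R(t)(I+tG)=I+t\,\sym G+O(t^2)$, one has $W(x,y,I+tG)=W(x,y,R(t)(I+tG))$, and dividing by $t^2$ and letting $t\to 0$ yields the equality (using continuity of $\mathcal{Q}(x,y,\cdot)$ to absorb the $O(t^2)$ remainder). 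Finally, for $t$ small enough one has $\dist^2(I+tG,\SO 3)\leq\rho$ and $\dist^2(I+tG,\SO 3)=t^2|\sym G|^2+o(t^2)$, so the two-sided bound in (W2) passes to the limit and gives $\alpha|\sym G|^2\leq\mathcal{Q}(x,y,G)\leq\beta|\sym G|^2$.

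For the modulus $m$, simply set
$$
m(\delta):=\sup_{0<|G|\leq\delta}\esssup_{(x,y)\in S^1\times\calY}\frac{|W(x,y,I+G)-\mathcal{Q}(x,y,G)|}{|G|^2},
$$
with $m(\delta)=+\infty$ allowed a priori. By construction $m$ is monotone non-decreasing, $m(\delta)\to 0$ as $\delta\to 0$ by the uniformity assumption, and the required inequality $|W(x,y,I+G)-\mathcal{Q}(x,y,G)|\leq|G|^2 m(|G|)$ holds by definition. I expect no essential obstacle: the only nontrivial step is the symmetrization argument in (Q3), and the whole proof is essentially the pointwise verification of \cite[Lemma 2.7]{Neukamm-11}, with the passage from pointwise to uniform being made free by the uniformity hypothesis in Assumption \ref{ass:main}.
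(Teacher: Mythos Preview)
Your argument is correct. The paper does not give its own proof of this lemma; it simply records the statement and refers the reader to \cite[Lemma~2.7]{Neukamm-11}. Your sketch spells out precisely that argument---deriving (Q1)--(Q3) from the uniform expansion via the pointwise limit $\mathcal{Q}(x,y,G)=\lim_{t\to 0}t^{-2}W(x,y,I+tG)$, handling the symmetrization through $R(t)=\exp(-t\,\skw G)$ and frame indifference, and defining $m$ as the obvious supremum---so there is nothing to compare: you have supplied the proof the paper chose to outsource, and you explicitly note the provenance yourself.
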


Let $W$ be an energy density satisfying Assumption \ref{ass:main}.
The elastic energy per unit thickness
of a deformation $u^h \in H^1(S^h;\R^3)$ of the shell $S^h$ is given by
$$
E^h(u^h)=\frac{1}{h}\int_{S^h} W\left( \Phi^h(x), r(x)/\e, \nabla u^h(x) \right)\ dx.
$$

We denote by $\mathcal{B}$ the $L^2$-closure of the set
$$
\{ q_w : w \in H^1 (S;\R^3) \}.
$$
As this is a linear space, its strong and its weak $L^2$-closure coincide.
The set
$\mathcal{B}$ is a closed linear subspace of $L^2(S; \QQ)$.
The space $\mathcal{B}$ is also encountered in the context of shell models derived from linearized elasticity;
see \cite{Sa89,Sanchez-89,Geymonat-Sanchez-95} for details.

Before we give the main statement we have to define the limit functionals. To do that we need the definition of the relaxation fields and the cell formulas.

\begin{definition}\label{def:relfield}
  We define the following operators:
  \begin{eqnarray*}
 && \mathcal{U}_0: \dot{H}^1(\mathcal Y;\R^2)  \times  L^2(I\times\mathcal Y;\R^3) \to L^2 (I \times\mathcal Y;\R^{3 \times 3}_{\sym}), \\
  && \mathcal{U}_0 (\zetaa,\bs g) = \left(\begin{array}{cc}
        \sym \nabla_y\zetaa
        & \begin{array}{cc} \bs g_1\\ \bs g_2\end{array}\\
        (\bs g_1,\;\bs g_2)&\bs g_3\\
      \end{array}\right)_{ij}\tau^i\otimes\tau^j, \\ && \\
  && \mathcal{U}_0^0: \dot{H}^1(\mathcal Y;\R^2) \times
    \dot{H}^2(\mathcal
    Y) \times  L^2(I\times\mathcal Y;\R^3) \to L^2 (I \times\mathcal Y;\R^{3\times 3}_{\sym}), \\
  && \mathcal{U}_0^0 (\zetaa,\varphi,\bs g) =
  \left(\begin{array}{cc}
        \sym\nabla_y\zetaa-t \nabla^2_y \varphi
        & \begin{array}{cc} \bs g_1\\ \bs g_2\end{array}\\
        (\bs g_1,\;\bs g_2)&\bs g_3\\
      \end{array}\right)_{ij}\ \tau^i\otimes\tau^j, \\ && \\
   && \mathcal{U}_{0,\gamma_1}^{1}: \dot{H}^1(\mathcal Y;\R^2) \times
    \dot{H}^2(\mathcal
    Y) \times  L^2(I\times\mathcal Y;\R^3) \to L^2 (I \times\mathcal Y;\R^{3 \times 3}_{\sym}), \\
  && \mathcal{U}_{0,\gamma_1}^{1} (\zetaa,\varphi,\bs g) =
 \left(\begin{array}{cc}
        \sym\nabla_y\zetaa+\tfrac{1}{\gamma_1}\varphi\AA(\hx)-t\hat \nabla^2_y \varphi
        & \begin{array}{cc} \bs g_1\\ \bs g_2\end{array}\\
        (\bs g_1,\;\bs g_2)&\bs g_3\\
      \end{array}\right)_{ij}\ \tau^i\otimes\tau^j, \\
&& \mathcal{U}_\infty:L^2(I;\dot{H}^1(\mathcal Y;\R^2))\times L^2(I; \dot{H}^1(\mathcal Y)) \times L^2(I;\R^3) \to L^2 (I \times\mathcal Y;\R^{3 \times 3}_{\sym}), \\
  && \mathcal{U}_\infty(\zetaa,\psi,\bs c)=
    \left(\begin{array}{cc}
        \sym\nabla_y\zetaa
        & \begin{array}{cc} \partial_{y_1}\psi+\bs
          c_1\\ \partial_{y_2}\psi+\bs c_2\end{array}\\
        \nabla_y\psi +(\bs c_1,\bs c_2) & \bs c_3
      \end{array}\right)_{ij}\tau^i\otimes\tau^j,\\ && \\
       && \mathcal{U}_{\gamma} : \dot{H}^1(I\times\mathcal Y;\R^3) \to  L^2 (I \times\mathcal Y;\R^{3 \times 3}_{\sym}); \text{ for } \gamma \in (0,\infty); \\
       && \mathcal{U}_\gamma (\phia) =
\sym(\nabla_y\phia,\tfrac{1}{\gamma}\partial_3\phia)_{ij}\ \tau^i\otimes\tau^j.
  \end{eqnarray*}

For $\gamma\in (0, \infty)$ we introduce the function spaces of \textit{relaxation fields}
$$
L_{\gamma}(I\times\mathcal Y) =
\left\{ \mathcal{U}_\gamma(\phi):
\phi\in\dot H^1(I\times\mathcal Y; \R^3) \right\}
$$
For $\gamma=\infty$ and $\gamma=0$ we define
\begin{align*}
L_\infty(I \times \calY) =&\left\{ {U}_\infty(\zetaa,\psi,\bs c):\, (\zeta,\psi,c) \in L^2(I;\dot{H}^1(\mathcal Y;\R^2))\times L^2(I; \dot{H}^1(\mathcal Y)) \times L^2(I;\R^3) \right\}, \\
L_0 (I\times\mathcal Y)= & \left\{ {U}_0(\zetaa,g):\, (\zeta,g) \in \dot{H}^1(\mathcal Y;\R^2)  \times  L^2(I\times\mathcal Y;\R^3) \right\}.
\end{align*}
We also introduce
\begin{align*}
L_0^0(I\times\mathcal Y) = &
\Big\{ \left(\begin{array}{cc}
        \sym\nabla_y\zetaa-t \nabla^2_y \varphi
        & \begin{array}{cc} \bs g_1\\ \bs g_2\end{array}\\
        (\bs g_1,\;\bs g_2)&\bs g_3\\
      \end{array}\right)_{ij}\ \tau^i\otimes\tau^j :
 \zeta\in \dot{H}^1(\mathcal Y;\R^2),\\  & \hspace{+10ex}  \varphi\in\dot{H}^2(\mathcal Y),\ g\in L^2(I\times\mathcal Y;\R^3)
\Big\}
\end{align*}
and for $x \in S$, $\gamma_1 \in (0,\infty)$ we define
\begin{align*}
L_{0,\gamma_1}^{1} (I \times \calY)= &
\Big\{
\left(\begin{array}{cc}
        \sym\nabla_y\zetaa+\tfrac{1}{\gamma_1}\varphi\AA(\hx)-t \nabla^2_y \varphi
        & \begin{array}{cc} \bs g_1\\ \bs g_2\end{array}\\
        (\bs g_1,\;\bs g_2)&\bs g_3\\
      \end{array}\right)_{ij}\ \tau^i\otimes\tau^j :
 \zeta\in \dot{H}^1(\mathcal Y;\R^2),\\ & \varphi\in\dot{H}^2(\mathcal Y),\ g\in L^2(I\times\mathcal Y;\R^3)
\Big\}.
\end{align*}
\end{definition}
\begin{remark}
Notice that all the operators $\mathcal{U}$ and the appropriate spaces also depend on $x \in S$. For simplicity of writing we do not write $x$ in the notation.
\end{remark}
For $\gamma \in (0,\infty]$ and $x \in S$ we define the functions $\mathcal{Q}_{\gamma} (x) : \QQ(x)_{\sym} \times \QQ(x)_{\sym} \to \R$ as follows:
\begin{equation} \label{defQg}
 \mathcal{Q}_{\gamma}(x, q^1, q^2) = \inf_{\bs U \in L_{\gamma} (I \times \calY ) }
\int_I\int_{\calY} \mathcal{Q}\Big(x + tn(x), y, q^1 + tq^2 + U \Big) \ud y \ud t.
\end{equation}
For  $\gamma_1 \in (0,\infty)$ we define $\mathcal{Q}_0^0(x), Q_{0,\gamma_1}^{1}(x): \QQ(x)_{\sym} \times \QQ(x)_{\sym} \to \R$ as follows:
\begin{align} \label{defQgi}
\mathcal{Q}_0^0(x, q^1, q^2) &= \inf_{\bs U \in L_0^0 (I \times \calY ) }
\int_I\int_{\calY} \mathcal{Q}\Big(x + tn(x), y, q^1 + tq^2 + U \Big) \ud y \ud t
\\
\label{defQg01}
\mathcal{Q}_{0,\gamma_1}^{1} (x,q^1, q^2) &=
\inf_{\bs U \in L_{0,\gamma_1}^{1} (I \times \calY ) }
\int_I\int_{\calY} \mathcal{Q}\Big(x + tn(x), y, q^1 + tq^2 + U \Big) \ud y \ud t
\end{align}
\begin{remark}
We discuss the cell formula in the limiting cases $\gamma=0$ and $\gamma=\infty$.
\begin{enumerate}[(i)]
\item In the case $\gamma=\infty$ define
 \begin{eqnarray*}
 \widetilde{L}_\infty(I\times\mathcal Y)&:=&\Bigg\{\,  \sum_{i,j=1}^3 \left(\begin{array}{cc}
        \sym\nabla_y\zetaa
        & \begin{array}{cc} \partial_{y_1}\psi+\bs
          c_1\\ \partial_{y_2}\psi+\bs c_2\end{array}\\
        \nabla_y\psi +(\bs c_1,\bs c_2) & \bs c_3
      \end{array}\right)_{ij} \taub^i \otimes \taub^j\,:
     \\
    &&\qquad \zetaa \in \dot{H}^1(\mathcal Y,\R^2),\psi \in  \dot{H}^1(\mathcal Y),\,\bs c\in \R^3  \Bigg\}.
\end{eqnarray*}
Also define for $(x,t) \in S\times I$
\begin{equation}
\widetilde{\mathcal{Q}}_\infty(x,t,q^1,q^2)=  \inf_{\bs U \in \widetilde{L}_\infty (I \times \calY; \R^{3 \times 3}_{\sym} ) }
\int_{\calY}   \mathcal{Q} \Big(x+tn(x),y, q^1+tq^2+\bs U \Big) \ud y.
\end{equation}
It is easy to see that $\widetilde{\mathcal{Q}}_\infty$ is, for a fixed $x\in S$, $t \in I$, a quadratic in $q^1,q^2$. We have
\begin{equation} \label{eq:iggg1112}
\mathcal{Q}_\infty(x,q^1,q^2)=\int_I \widetilde{\mathcal{Q}}_\infty(x+tn(x),q^1,q^2) \ud t
\end{equation}

\item Define as in \cite{Lewicka1}
\begin{equation}
\mathcal{Q}_2(x,t,q^1,q^2)=\min_{\Ma \in \R^{3 \times 3}_{\sym}} \{ \mathcal{Q}(x+tn(x),\Ma): q^1 + tq^2-\sum_{i,j=1,2} (\Ma\taub_j \cdot \taub_i)\taub^i \otimes \taub^j=0 \}.
\end{equation}
Also define
 \begin{eqnarray*}
    \widetilde{L}^0_0(I\times\mathcal Y) &=& \Bigg\{\, \sum_{i,j=1}^2
\left(\begin{array}{c}
        \sym\nabla_y\zetaa-t \nabla_y^2\varphi
      \end{array}\right)_{ij} \taub^i \otimes \taub^j\,: \zetaa\in \dot{H}^1(\mathcal Y,\R^2),
   \varphi\in \dot{H}^2(\mathcal
    Y) \Bigg\}, \\
    \widetilde{L}_{0,\gamma_1}^{1}(I\times\mathcal Y)&=&\Bigg\{\, \sum_{i,j=1}^2\left(\begin{array}{c}
        \sym\nabla_y\zetaa+\tfrac{1}{\gamma_1} \varphi\AA_{ij}(\hx)-t\nabla_y^2\varphi
      \end{array}\right)_{ij} \taub^i \otimes \taub^j\,:
     \\  &&\qquad \zetaa\in \dot{H}^1(\mathcal Y,\R^2),
   \varphi\in \dot{H}^2(\mathcal
    Y) \Bigg\}
\end{eqnarray*}
It can be easily seen that we have for the cell formula
\begin{eqnarray} \label{eq:iggg1111}
& & \mathcal{Q}_0^0(x,q^1,q^2)= \inf_{\bs U \in \widetilde{L}_0^0 (I \times \calY) }
\iint_{I \times \mathcal{Y}} \mathcal{Q}_2\Big(x+tn(x),y, q^1 + tq^2+\bs U \Big) \ud t \ud y,
\end{eqnarray}
for i=0,2 i.e.
\begin{eqnarray} \label{eq:iggg1111111}
 \mathcal{Q}_{0,\gamma_1}^{1} (x,q^1,q^2)&=& \\ \nonumber && \hspace{-10ex} \inf_{\bs U \in \widetilde{L}_{0,\gamma_1}^{1} (I \times \calY ) }
\iint_{I \times \mathcal{Y}} \mathcal{Q}_2\Big(x+tn(x),y, q^1+tq^2+\bs U \Big) \ud t \ud y.
\end{eqnarray}
In the case when $\mathcal{Q}$ does not depend on $t$ we have that
\begin{eqnarray*}
\mathcal{Q}_0^0(x,q^1,q^2) &=& \inf_{\zetaa \in \dot{H}^1(\mathcal Y,\R^2)  }\int_{\mathcal{Y}} \mathcal{Q}_2(x,y,q^1+\sum_{i,j=1}^2(\sym\nabla_y\zetaa)_{ij}\taub^i \otimes \taub^j ) \ud y\\ \nonumber & & + \frac{1}{12} \inf_{\varphi \in \dot{H}^2 (\mathcal Y)}
\int_{\mathcal{Y}} \mathcal{Q}_2 (x,y,q^2+\sum_{i,j=1}^2(\nabla^2_y\varphi)_{ij}\taub_i \otimes \taub_j ) \ud y, \\
\mathcal{Q}_0^{1,\gamma_1} (x,q^1,q^2) &=&\\ \nonumber
 \inf_{\zetaa \in \dot{H}^1(\mathcal Y,\R^2),\varphi \in \dot{H}^2 (\mathcal Y)  }&&\bigg(\int_{\mathcal{Y}} \mathcal{Q}_2\Big(x,y,q^1+\sum_{i,j=1}^2\big((\sym\nabla_y\zetaa)_{ij}+\tfrac{1}{\gamma_1}
\varphi\AA_{ij} \big)\taub^i \otimes \taub^j\Big) \ud y\\ \nonumber & & + \frac{1}{12}
\int_{\mathcal{Y}} \mathcal{Q}_2 (x,y,q^2+\sum_{i,j=1}^2(\nabla^2_y\varphi)_{ij}\taub_i \otimes \taub_j ) \ud y\bigg).
\end{eqnarray*}

\end{enumerate}

\end{remark}

\begin{remark}
In the same way as in \cite{NeuVel-12} we can prove the following: For every $q^1, q^2 \in \mathbb{S}(x)_{\sym}$ and $\hx \in S$ we have that
\begin{eqnarray*} \lim_{\gamma \to \infty} \mathcal{Q}_{\gamma}(\hx,q^1, q^2)&=& \mathcal{Q}_{\infty}(\hx,q^1, q^2)\\  \lim_{\gamma \to 0} \mathcal{Q}_{\gamma}(\hx,q^1, q^2) &=& \mathcal{Q}_0^0 (\hx,q^1, q^2)
\end{eqnarray*}
\end{remark}

\begin{remark}
Notice that when $\AA=0$ then
all spaces $L_0^0$ and $L_{0,\gamma_1}^{1}$ coincide for $\gamma_1 \in (0,\infty)$.
This corresponds to the observation in the von K\'arm\'an plate theory that
for $\gamma=0$ one obtains only one relaxation space, cf. \cite{NeuVel-12} for details.
\end{remark}

For $\gamma \in (0,\infty]$ define the functionals $I_\gamma : H^2(S;\R^3) \times L^2(S;\QQ)\to \R$ by setting
\begin{equation} \label{eq:iggg22211}
I_\gamma (\V, \B_w)=\int_S \mathcal{Q}_\gamma(\cdot,\B_w + \tfrac{1}{2} (dV)^2,\, - \bbv \big) d\HH^2,
\end{equation}
and define the functionals $I_0^0:H^2(S;\R^3) \times L^2(S;\QQ)\to \R$ by
\begin{equation} \label{eq:iggg222211}
I_0^0 (\V,\B_w)=\int_S \mathcal{Q}_0^0(\cdot,\B_w + \tfrac{1}{2} (dV)^2,\, - \bbv \big) d\HH^2,
\end{equation}
as well as, for $\gamma_1 \in (0,\infty)$, define the functionals $I_{0,,\gamma_1}^{1}:H^2(S;\R^3) \times L^2(S;\QQ)\to \R$ by
\begin{equation} \label{eq:iggg2222211}
I_{0,\gamma_1}^{1}  (\V,\B_w)=\int_S \mathcal{Q}_{0,\gamma_1}^{1} (\cdot,\B_w + \tfrac{1}{2} (dV)^2,\, - \bbv \big) d\HH^2.
\end{equation}

This is our main result:

\begin{theorem} \label{tm:glavni}
Let $W$ satisfy Assumption \ref{ass:main} and assume that
$\ua^h \in H^1(S^h;\R^3)$ satisfy
\begin{equation} \label{uuuvjet}
\limsup_{h \to 0} h^{-4}E^h(\ua^h) < \infty.
\end{equation}

Then the following are true:
\begin{enumerate}[(i)]
\item \textit{(compactness)}.
There exists a subsequence, still denoted by $(\bar{\ya}^h)$,
and there exist $Q^h\in \SO 3$ and $c^h\in\R^3$ such that
the sequences $y^h$ and $V^h$ defined by
$$
\ya^h = (\bs Q^h)^T\bar{\ya}^h-\bs c^h
$$
and
$$
\V^h(x) = \frac{1}{h} \left(\int_I \ya^h(\hx+t\na(\hx)) dt\ - x\right) \mbox{ for all }x\in S
$$
satisfy the following:
\begin{enumerate}[(a)]
    \item We have
$$
\ya^h \to \pi\mbox{ strongly in }H^1(S^{1};\R^3).
$$
    \item There exists an infinitesimal bending $\V \in H^2(S;\R^{3})$ of $S$ such that
$$
V^h \to \V\mbox{ strongly in }H^1(S;\R^3).
$$
\item There exists $B_w\in L^2(S; \QQ)$ such that
$$
\frac{1}{h} q_{\V^h} \rightharpoonup B_w\mbox{ weakly in }L^2(S;\QQ).
$$
\end{enumerate}
\item
\textit{(lower bound)}. Defining
$I_\gamma$ by (\ref{eq:iggg22211}) and $I_0^0$ by (\ref{eq:iggg222211}) and $I_0^{1,\gamma_1}$ by (\ref{eq:iggg2222211}), we have

$$
\liminf_{h \to 0} h^{-4} E^h (\ua^h) \geq
\begin{cases}
I_\gamma (\V,\B_w) &\mbox{ if }h/\e\to \gamma \in (0,\infty]
\\
I_0^0 (\V,\B_w) &\mbox{ if }\e\gg h\gg \e^2
\\
I_{0,\gamma_1}^{1} (\V,\B_w) &\mbox{ if }\e^2/h \to \tfrac{1}{\gamma_1} \in (0,\infty)
\end{cases}
$$
\item \textit{(recovery sequence)} For any infinitesimal bending $\V \in H^2(S,\R^3)$ of $S$
and any $\B_w \in \mathcal{B}$, there exist $\ua^h\in H^1(S^h;\R^3)$
satisfying (\ref{uuuvjet}), and such that the conclusions of part (i) are true
with $\bs Q^h=\I$ and $\bs c^h=0$, and
$$
\lim_{h \to 0} h^{-4} E^h (\ua^h) =
\begin{cases}
I_\gamma (\V,\B_w) &\mbox{ if }h/\e\to \gamma \in (0,\infty]
\\
I_0^0 (\V,\B_w) &\mbox{ if }\e\gg h\gg \e^2
\\
I_{0,\gamma_1}^{1} (\V,\B_w) &\mbox{ if }\e^2/h \to \tfrac{1}{\gamma_1} \in (0,\infty).
\end{cases}
$$
\end{enumerate}
\end{theorem}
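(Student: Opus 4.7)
The plan is to combine the FJM quantitative rigidity estimate with the two-scale machinery developed for shells in \cite{Lewicka1} and for homogenized plates in \cite{NeuVel-12}. I would apply geometric rigidity on cylindrical patches $(\Phi^h)^{-1}(\omega'\times I) \subset S^h$ of tangential diameter of order $h$ to construct, after a smoothing step, a rotation field $R^h : S \to \SO{3}$ satisfying
$$
\int_{S^h} |\nabla u^h - R^h\circ\pi|^2 \lesssim h\, E^h(u^h) \lesssim h^5, \qquad \int_S |\nabla R^h|^2 \lesssim h^2.
$$
Choosing $Q^h, c^h$ so that the average of $R^h$ over $S$ equals $\I$ gives $R^h \to \I$ in $H^1$ and $h^{-1}(R^h - \I) \rightharpoonup A$ weakly in $H^1(S; \R^{3\times3})$ with $A$ skew. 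A Poincar\'e-type argument then identifies the weak $H^1$-limit of $h^{-1}\nabla V^h$ with $\Omega_V$ for a vector field $V \in H^2(S)$; skew-symmetry of $A$ forces $q_V = 0$, so $V$ is an infinitesimal bending. The bound $\|h^{-1} q_{V^h}\|_{L^2} \lesssim 1$ obtained from a sharper rigidity variant yields the weak limit $B_w$.

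\textbf{Lower bound (ii).} Introduce the rescaled strain
$$
G^h(x, t) = \tfrac{1}{h^2}\bigl((R^h(x))^T \nabla u^h(x + tn(x))(\I + t\AA(x))^{-1} - \I\bigr),
$$
viewed as a function on $\O = \omega \times I$. Rigidity gives $\|G^h\|_{L^2} \lesssim 1$, so one extracts a two-scale limit $G_0 \in L^2(S \times I \times \calY; \R^{3\times 3})$. Taylor expansion \eqref{eq:94} together with frame-indifference yields
$$
h^{-4} E^h(u^h) = \int_{\O} \calQ\bigl(\cdot, \tfrac{r(\cdot)}{\e}, G^h\bigr) + o(1),
$$
and lower semicontinuity under two-scale convergence delivers
$$
\liminf_{h\to 0} h^{-4} E^h(u^h) \geq \int_S \int_I \int_{\calY} \calQ\bigl(\cdot, y, \sym G_0\bigr)\,\ud y\,\ud t\,d\HH^2.
$$
The decisive step is to identify $\sym G_0$ on $T_S \times T_S$: a careful expansion using the definitions of $V^h$, $V$, and $B_w$ shows that its tangential part equals $B_w + \tfrac12 (dV)^2 - t\,\bbv + U$, where the corrector $U$ belongs to the relaxation space $L_\gamma$, $L_0^0$, or $L_{0,\gamma_1}^1$ according to the scaling regime. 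Minimization over $U$ delivers $I_\gamma$, $I_0^0$, respectively $I_{0,\gamma_1}^1$.

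\textbf{Recovery sequence (iii) and main obstacle.} Given $V$ and $B_w = q_w$ with smooth $w$, I would build the ansatz
$$
u^h(x + tn(x)) = x + h V(x) + h^2 w(x) + t\bigl(n(x) + h\mu_V(x)\bigr) + h^2 \phi^h(x, t),
$$
where $\phi^h(x, t) \approx \phi(x, x/\e, t)$ is a suitably cut-off two-scale corrector chosen to saturate the cell-formula infimum defining $\calQ_\gamma\bigl(x, B_w + \tfrac12 (dV)^2, -\bbv\bigr)$ (and analogously for $\calQ_0^0$ or $\calQ_{0,\gamma_1}^1$). Lemma \ref{ansatz} combined with \eqref{form-2} shows that the tangential part of the rescaled strain is then exactly $B_w + \tfrac12 (dV)^2 - t\,\bbv$ plus the corrector, so the energy converges to the required $\Gamma$-limit. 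A density/diagonal argument, using that $\calB$ is the $L^2$-closure of $\{q_w\}$, extends the construction to arbitrary $B_w \in \calB$. The subtlest point throughout is the identification of the relaxation space in the case $h \sim \e^2$, where the limit strain acquires the extra curvature term $\gamma_1^{-1}\varphi\AA(x)$: this term arises from the interaction of the second-order Taylor expansion of $(\I + t\AA)^{-1}$ with the $h^2$-order oscillatory normal corrector at scale $\e$, and its proper extraction requires pushing the two-scale expansion of the strain one order further than in the plate case \cite{NeuVel-12}. For $h \ll \e^2$ this expansion breaks down, which is exactly why that regime is treated separately under convexity in Theorem \ref{tm:glavni2}.
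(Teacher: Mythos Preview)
Your outline for compactness and for the lower bound is correct and matches the paper's route: FJM rigidity on patches gives the rotation field $R^h$, a smoothed version $V_s^h\in H^2$ of the midplane displacement is used to build a comparison deformation, and Proposition~\ref{tm:ggggg} together with Proposition~\ref{tm:1} identify the two-scale limit of the strain as $B_w+\tfrac12(dV)^2-tb_V+U$ with $U$ in the appropriate relaxation space.

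Your recovery-sequence construction, however, has a genuine gap. First a scaling issue: a corrector of order $h^2$ in the deformation produces, via the fast variable, tangential gradient contributions of order $h^2/\e$ and normal ones of order $h$, hence $O(1/h)$ in the normalized strain --- this blows up. The paper uses two correctors, one of order $h^3$ and one of order $\e h^2$ (both $\sim h^3$ when $\gamma\in(0,\infty)$), precisely so that $\nabla_h$ applied to either yields an $O(h^2)$ contribution. Second, and more seriously, in the regimes $\gamma=0$ a single oscillatory corrector $\phi$ cannot reach the relaxation spaces $L_0^0$ or $L_{0,\gamma_1}^1$: those spaces contain the field $-t\nabla_y^2\varphi$ with $\varphi\in\dot H^2(\mathcal Y)$, which in the ansatz must come from the \emph{bending} part $t\,\mu_{v+hw}$. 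The paper therefore modifies $w^{\delta,h}$ itself, adding an oscillating normal term $\tfrac{\e^2}{h}\varphi^\delta(\cdot,r/\e)\,n$; this is what feeds $\varphi$ into both the bending corrector and, via the identity $q_V=q_{V_{\ttan}}+(V\cdot n)\AA$, into the membrane strain as $\tfrac{\e^2}{h}\varphi^\delta\AA\to\gamma_1^{-1}\varphi\AA$.

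This last point also corrects your explanation of the curvature term $\gamma_1^{-1}\varphi\AA$: it does \emph{not} come from a higher-order Taylor expansion of $(I+t\AA)^{-1}$. It comes from the purely two-dimensional identity \eqref{eq:identity}, i.e.\ from the fact that on a curved surface a normal displacement produces membrane strain proportional to the second fundamental form. In the lower bound this is exactly the mechanism in Proposition~\ref{tm:1}\,(ii); in the upper bound it is why the paper perturbs $w$ in the normal direction rather than adding a separate corrector. Finally, since $V$ is only $H^2$, you cannot insert it directly into the ansatz: you need the $W^{2,\infty}$ truncations $v^{\delta,h}$ of Proposition~\ref{P:FJM} to control the $L^\infty$ norm needed for the quadratic expansion of $W$, together with the set-measure estimate to show $q_{v^{\delta,h}}/h\to 0$.
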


From now on $u^h\in H^1 (S^h;\R^3)$ will always denote a sequence satisfying \eqref{uuuvjet}.

\subsection{Unit thickness rescaling}

Recall that $\Phi^h : S^h \to S^1$ is given by
$$
\Phi^h(x) = \pi(x) + \frac{t(x)}{h}n(x).
$$
Since $\nabla t = n$, Lemma \ref{dpi} and formula \eqref{dpi-2} show (recall that $n$ is extended trivially to $S^1$):
\begin{align*}
\nabla\Phi^h &= \nabla\pi + \frac{t}{h}\nabla n + \frac{1}{h}n\otimes n
\\
&= T_S(I + t\AA)^{-1} + \frac{t}{h}\AA T_S(I + t\AA)^{-1} + \frac{1}{h}n\otimes n
\end{align*}
Since $T_S$ clearly commutes with $\AA$, we see that $T_S$ commutes with $(I + t\AA)^{-1}$ as well. Hence
\begin{equation}
\label{dphi}
\nabla\Phi^h = (I_h + \frac{t}{h}\AA)(I + t\AA)^{-1}\mbox{ on }S^h,
\end{equation}
where $I_h=T_S+\tfrac{1}{h} n \otimes n$.
Following \cite{FJM-02}, for given $u : S^h\to\R^3$ we define its rescaled version $y : S^1\to\R^3$ by
$$ 
\ya(\Phi^h) = \ua \mbox{ on }S^h.
$$ 
We define the rescaled gradient of $y$ by the condition
\begin{equation}
\label{defDh}
\nabla_h y( \Phi^h ) = \nabla u \mbox{ on }S^h.
\end{equation}
To compute $\nabla_h$ more explicitly, insert the definition of $y$ into \eqref{defDh} and use \eqref{dphi} to find
\begin{equation}
\label{fladh}
\nabla_h y =  \nabla y\ (I_h + t\AA)(I + ht\AA)^{-1} \mbox{ on }S^1,
\end{equation}

In order to express the elastic energy in terms of the new variables, we associate with $y : S^1\to\R^3$
the energy
\begin{eqnarray*}
I^h(y) &=& \int_{S^1} W\left( x, r(x)/\e, \nabla_h y(x) \right)\det\left(I+t(x) \AA(x) \right)^{-1}\ dx \\
&=& \int_S \int_I W\left( x+t n(x), r(x)/\e, \nabla_h y(x) \right)\ dt\ d\mathcal{H}^2\\.
\end{eqnarray*}
By a change of variables we have
$$
E^h(u^h) = \frac{1}{h} \int_{S^1} W\left( \cdot, r/\e, \nabla_h y^h \right)\ \left|\det\nabla (\Phi^h)^{-1}\right|.
$$
Using \eqref{dphi} it is easy to see  that
$$
E^h(u^h) = I^h(y^h) + o(h^4)\mbox{ as }h\to 0.
$$

\subsection{FJM-compactness}

The following lemma proves the first part of Theorem \ref{tm:glavni}. It
is a direct consequences of \cite[Theorem 3.1]{FJM-02} and of arguments in
\cite{FJM-06}. We refer to \cite{Lewicka1} for the extension to the present setting.

\begin{lemma}\label{lm:100}
There exist a constant $C>0$, independent of $h$, and a sequence of
matrix fields $(\Ra^h) \subset H^1(S; SO(3))$ (extended trivially to $S^h$)
and there exists a sequence of matrices $(\Q^h) \subset \SO 3$ such that:
\begin{enumerate}[(i)]
\item $\limsup_{h \to 0} h^{-5/2}\| \nabla \ua^h - \Ra^h\|_{L^2(S^h)} <\infty$
\item $\limsup_{h\to 0} h^{-1}\|\nabla \Ra^h\|_{L^2(S)}<\infty$
\item $\limsup_{h \to 0} h^{-1}\|(\Q^h)^T \Ra^h-\I\|_{L^p(S)} < \infty$, for all
    $p \in [1,\infty)$.
\item $(\Q^h)^T \Ra^h \to \I$ strongly in $H^1$.
\end{enumerate}
Moreover, there exists a matrix field $\A\in H^1(S, \so 3)$ taking values in the space of skew symmetric matrices,
such that (after passing to subsequences)
\begin{enumerate}[(i)]
\setcounter{enumi}{4}
\item $ \tfrac{1}{h} \big( (\Q^h)^T \Ra^h-\I
    \big) \rightharpoonup \A,$ weakly in $H^1(S;\R^{3 \times 3})$.
\item $\tfrac{1}{h^2} \sym\big( (\Q^h)^T
    \Ra^h-\I \big) \to \tfrac{1}{2} \A^2,\ \textrm{ strongly in
    } L^p(S;\R^{3 \times 3})$, \ for all
    $p \in [1,\infty)$.
\end{enumerate}
Moreover, the following are true:
\begin{enumerate}[(i)]
\item $\limsup_{h \to 0} \tfrac{1}{h^2} \| \nabla_h \bar{\ya}^h-\Ra^h \|_{L^2(S^1)}
    < \infty$.
\item $ \tfrac{1}{h} \big( (\Q^h)^T \nabla_h
    \ya^h-\I \big) \rightharpoonup \A$,\ \textrm{ weakly in } $H^1$ up
    to a subsequence.
\end{enumerate}
Define $\ya^h \in H^1(S^{1};\R^3)$ by
$$
\ya^h=(\Q^h)^T \bar{\ya}^h-\ca^h,
$$
where
$$
\ca^h = \fint_S \int_I \big((\Q^h)^T \bar{\ya}^h(x + tn(x)) - x\big) dt\ d\HH^2(x).
$$
Introduce the (average) midplane displacements $V^h : S\to\R^3$ by setting
\begin{equation}\label{eq:201}
  \V^h(x) := \frac{1}{h} \left(\int_I \big( \ya^h(x + t\na(x)) \big) dt - x\right)\mbox{ for all }x\in S.
\end{equation}
Then $\fint_S \V^h=0$ and (after passing to a subsequence)
\begin{enumerate}[(i)]
\setcounter{enumi}{2}
\item $ \ya^h \to \pi$, strongly in
    $H^1(S^{1};\R^3)$.
\item There exists an infinitesimal bending $V\in H^2(S; \R^3)$ of $S$ with $\O_V = A$ and such that
$
\V^h \to \V
$
strongly in $H^1(S;\R^3)$.
\item $\tfrac{1}{h} q_{V^h}$ is bounded in $L^2(S;\R^{3 \times 3})$.
\end{enumerate}
\end{lemma}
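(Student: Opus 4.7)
The statement is the standard FJM-style compactness adapted to the shell setting, and I would follow the scheme of \cite{FJM-02,FJM-06,Lewicka1}.

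\textbf{Construction of $R^h$ and extraction of the skew field.} The starting point is the quantitative rigidity theorem of \cite{FJM-02} applied on the curvilinear cylinders $\Phi^h(B_{ch}(x_i))\subset S^h$ associated to a controlled cover of $S$ by geodesic balls of radius $\sim h$. Combined with the non-degeneracy bound in Definition~\ref{def:materialclass} and the energy bound \eqref{uuuvjet}, this gives $L^2$-close local rotations, which the mollification/patching procedure of \cite{FJM-06} (adapted to shells as in \cite{Lewicka1}) assembles into $R^h\in H^1(S;\SO 3)$ satisfying (i) and (ii). From (ii) and the Poincar\'e inequality, $R^h - \fint_S R^h = O(h)$ in $L^2$; letting $Q^h$ be the $\SO 3$-projection of $\fint_S R^h$ yields (iii), and (iv) follows by combining the $L^2$-smallness with the $H^1$-bound from (ii). A weak $H^1$-subsequential limit $A$ of $\tfrac{1}{h}((Q^h)^T R^h - \I)$ gives (v). For the skew-symmetry and (vi), I would use the algebraic identity $(R-\I)^T(R-\I) = -2\sym(R-\I)$ valid on $\SO 3$: dividing by $h^2$, the left-hand side converges strongly in $L^p$ for all $p<\infty$ via the compact embedding $H^1\hookrightarrow L^p$ to $A^T A$, which forces $A\in\so 3$ and identifies the limit of $\tfrac{1}{h^2}\sym((Q^h)^T R^h -\I)$ as $\tfrac{1}{2}A^2$.

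\textbf{Rescaled gradient and midplane displacement.} The bounds (vii) and (viii) follow from (i) and (v) after the change of variables \eqref{fladh}. Defining $y^h$ and $V^h$ by the stated normalization and \eqref{eq:201}, the expansion $\nabla_h y^h = Q^h(\I + hA^h) + o(h)$ in $L^2$, with $A^h := \tfrac{1}{h}((Q^h)^T R^h - \I)\rightharpoonup A$, gives strong $H^1$-convergence $y^h\to \pi$ on $S^1$, proving conclusion (iii). Passing to the limit in the definition of $V^h$ then yields $V^h\to V$ strongly in $H^1(S;\R^3)$ for some $V\in H^1(S;\R^3)$ with $\fint_S V = 0$.

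\textbf{Identification $\Omega_V=A$, infinitesimal bending and the strain bound.} This is the delicate step. Differentiating the midplane average \eqref{eq:201} tangentially and using (viii), one identifies $\nabla_{\ttan}V\,T_S$ with the tangential block of $A$; the normal block $\mu_V\otimes n$ arises from the thickness derivative $\tfrac{1}{h}\partial_3$ in $\nabla_h y^h$, which converges to $An$, together with relation \eqref{idp-1}. Hence $\Omega_V = A$; since $A$ is skew, Lemma~\ref{irre} yields $q_V = \sym\Omega_V(T_S,T_S) = 0$, so $V$ is an infinitesimal bending, and $V\in H^2(S;\R^3)$ because $\Omega_V = A\in H^1$. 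The final $L^2$-bound on $\tfrac{1}{h}q_{V^h}$ follows from the estimate $\|\nabla_h y^h - R^h\|_{L^2} = O(h^2)$ in (vii) combined with Lemma~\ref{irre} applied to $V^h$, since the symmetric part of $(Q^h)^T \nabla_h y^h - \I$ agrees with that of $(Q^h)^T R^h - \I$ up to an $O(h^2)$ error in $L^2$, and the latter is of order $h^2$ by (vi). The hardest part is the matching of the normal block $\mu_V$ with the corresponding component of $A$, which requires careful use of the shell geometry, in particular the trivial normal extension of tangential fields and formula \eqref{dpi-2}.
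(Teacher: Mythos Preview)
Your sketch is correct and follows exactly the route the paper takes: the paper does not give an independent proof but simply defers to \cite[Theorem~3.1]{FJM-02}, the patching/averaging arguments of \cite{FJM-06}, and their adaptation to shells in \cite{Lewicka1}, which is precisely the scheme you outline. One minor point: your identification of the normal column $\mu_V\otimes n$ is more cleanly obtained from the skew-symmetry of $A$ together with the tangential identification $\nabla_{\ttan}V\,T_S = A\,T_S$ and \eqref{idp-1}, rather than from the thickness derivative directly, but the conclusion is the same.
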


In what follows we replace the sequence $\Ra^h$ by $(\Q^h)^T \Ra^h$ and the sequence $\ya^h$ by $\bar{\ya}^h$, so
we assume without loss of generality that $Q^h = Id$.
\\
Expressed in the unrescaled variables, we have
$$
V^h(x) = \frac{1}{h^2} \left(\int_{I^h} u^h(x + tn(x))\, dt\, - x\right),
$$
i.e. $x + hV^h(x) = \fint_{I^h} u^h(x + tn(x))\ dt$.

Next we modify the displacement fields $\V^h$ into more regular fields $\V^h_s$ enjoying a similar compactness.

\begin{lemma} \label{kor:1}
There exist $\V_s^h \in H^2(S; \R^3)$ with $\fint_S \V_s^h = 0$ satisfying
\begin{equation}
\label{kor-b}
\limsup_{h \to 0} h^{-1}\| \V_s^h-\V^h\|_{H^1(S)} <\infty
\end{equation}
and
\begin{equation}
\label{kor-a}
\left\|\left( \nabla_{tan} V^h_s - \frac{R^h - I}{h} \right)T_S\right\|_{L^2(S)} \leq
\left\|\left( \nabla_{tan} V^h - \frac{R^h - I}{h} \right)T_S\right\|_{L^2(S)}.
\end{equation}
Moreover, $(V^h_s)$ is uniformly bounded in $H^2(S)$ and
\begin{equation}
\label{kor-c}
V_s^h \weak V\mbox{ weakly in }H^2(S;\R^3).
\end{equation}
\end{lemma}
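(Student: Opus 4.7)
The plan is to construct $V_s^h$ as the minimizer of a quadratic functional over a suitable Hilbert space, which is tantamount to solving an elliptic Neumann problem on $S$. Set $M^h := (R^h - \id)/h \in H^1(S;\R^{3\times 3})$, which is uniformly bounded by Lemma \ref{lm:100}(ii).

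\textbf{Step 1 (a priori closeness of $V^h$ to $M^h$).} First I would show that $V^h$ itself essentially has the property demanded of $V_s^h$, namely
\[
\bigl\|(\nabla_{\ttan} V^h - M^h) T_S\bigr\|_{L^2(S)} = O(h).
\]
This is a direct computation: for $x\in S$ and $\tau\in T_xS$,
\[
(\nabla_{\ttan} V^h)(x)\tau = h^{-1}\int_I (\nabla y^h)(x + tn(x))\bigl(\id + t\AA(x)\bigr)\tau\,dt - h^{-1}\tau,
\]
and the estimate follows by combining the identity \eqref{fladh}, the quantitative rigidity bound $\|\nabla_h y^h - R^h\|_{L^2(S^1)} = O(h^2)$ from Lemma \ref{lm:100}(vii), and the vanishing first moment $\int_I t\,dt = 0$ (which kills the $h^{-1}\cdot t\AA$ term after integration).

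\textbf{Step 2 (Hilbert projection, proof of (\ref{kor-a}) and (\ref{kor-b})).} Define $V_s^h$ as the unique minimizer of
\[
J^h(W) := \int_S \bigl|(\nabla_{\ttan} W - M^h) T_S\bigr|^2 \,d\HH^2
\]
over the closed affine subspace $\{W\in H^1(S;\R^3) : \fint_S W = 0\}$; existence and uniqueness follow from the direct method together with the Poincar\'e inequality on $S$. Since $V^h$ lies in this space by Lemma \ref{lm:100}, minimality of $V_s^h$ gives (\ref{kor-a}) immediately. Combining (\ref{kor-a}) with the triangle inequality and Step 1 yields $\|\nabla_{\ttan}(V_s^h - V^h) T_S\|_{L^2(S)} = O(h)$, and then Poincar\'e's inequality (both $V_s^h$ and $V^h$ have zero mean) promotes this to (\ref{kor-b}).

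\textbf{Step 3 (elliptic regularity and (\ref{kor-c})).} The Euler-Lagrange equation
\[
\int_S \bigl(\nabla_{\ttan} V_s^h - M^h T_S\bigr) : \nabla_{\ttan} \phi \,d\HH^2 = 0 \qquad \forall\,\phi\in H^1(S;\R^3),
\]
pulled back via $\psi:\omega\to S$, becomes a uniformly elliptic linear Neumann system on the $C^3$ domain $\omega$ whose divergence-form data, built from the pullback of $M^h T_S$, is uniformly bounded in $H^1(\omega)$. Standard elliptic regularity for the Neumann problem on $C^3$ domains therefore yields $\|V_s^h\|_{H^2(S)} \leq C$ uniformly in $h$. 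Combining this uniform $H^2$-bound with (\ref{kor-b}) and the strong convergence $V^h \to V$ in $H^1(S;\R^3)$ from Lemma \ref{lm:100}(iv), we conclude $V_s^h \rightharpoonup V$ weakly in $H^2(S;\R^3)$, i.e.\ (\ref{kor-c}).

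The main technical point is Step 1, where the rigidity estimate must be combined with the cancellation from $t$-averaging to beat the factor $h^{-1}$ in the definition of $V^h$. The elliptic regularity of Step 3 is routine after pulling back to the chart $\omega$, though one has to verify that the natural (Neumann-type) boundary conditions of $J^h$ are covered by the standard regularity theory on $C^3$ domains.
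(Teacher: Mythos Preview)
Your proposal is correct and follows essentially the same approach as the paper: define $V_s^h$ as the minimizer of a quadratic functional, obtain \eqref{kor-a} from minimality, \eqref{kor-b} from Step~1 combined with Poincar\'e, and the uniform $H^2$-bound (hence \eqref{kor-c}) from elliptic regularity for the associated Neumann problem using the $H^1$-bound on $(R^h-I)/h$. The only cosmetic difference is that the paper pulls back to $\omega$ and minimizes $\int_\omega |\nabla v - p_i|^2$ componentwise in flat coordinates (so the Euler--Lagrange equation reduces to the scalar Neumann problem $-\Delta v_i = \operatorname{div} p_i$), whereas you minimize the intrinsic functional on $S$; the two formulations are equivalent.
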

\begin{proof}
We follow \cite[Proposition~3.1]{NeuVel-12}.
For $i=1,2,3$ denote by $p_i$ the $i$-th row of the matrix $\frac{\Ra^h(\psi) - \I}{h} \nabla\psi$.
We define $\tilde \V^h_s \in H^2(\tilde \omega;\R^3)$ such that $(\tilde \V^h_s)_i$ is a minimiser of the functional
\begin{equation*}
 v\mapsto\int_{\tilde\omega}|\nabla v - p_i|^2\, dx
\end{equation*}
among all $v\in H^1(\tilde \omega)$ satisfying $\int_\omega v=0$, and we define $V^h_s$ via $V^h_s(\psi) = \tilde V^h_s$.
The bound \eqref{kor-a} follows from the minimality of $V^h_s$.
Combining the tangential components of \eqref{tay-1} and \eqref{tay-2} below, we obtain
$$
\|\nabla_{tan} V^h_s - \nabla_{tan} V^h\|_{L^2(S)} \leq Ch.
$$
Hence \eqref{kor-b} follows from Poincar\'e's inequality on $S$.
\\
Since $\partial\tilde \omega$ is $C^{1,1}$, standard regularity estimates for minimisers imply that
$V^h_s\in H^2(S)$ with bounds
$$
\|V^h_s\|_{H^2(S)}\leq C\left( \|\mbox{div } \pa \|_{L^2(\tilde \omega)}+\|\pa\|_{L^2(\tilde \omega)}  \right).
$$
Hence Lemma \ref{lm:100} (v) ensures that $(V_s^h)$ is uniformly bounded in $H^2(S)$.
Since $V^h \to V$ in $H^1$, the bound \eqref{kor-b} therefore implies \eqref{kor-c}.

\end{proof}

\begin{lemma}
\label{tay}
There exist maps $F^h_s$, $F^h\in L^2(S; \R^{3\times 3})$ with
$$
\limsup_{h\to 0} h^{-2}\left( \|F_s^h\|_{L^2(S)} + \|F^h\|_{L^2(S)} \right) < \infty,
$$
such that
\begin{equation}
\label{tay-1}
R^h = I + h\O_{V^h} + F^h
\end{equation}
and
\begin{equation}
\label{tay-2}
R^h = I + h\O_{V^h_s} + F_s^h
\end{equation}
\end{lemma}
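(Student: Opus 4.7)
The plan is to prove both expansions together by splitting $F^h := R^h - I - h \Omega_{V^h}$ (and analogously $F_s^h$) into symmetric and skew parts and controlling each separately. The key ingredients are the $SO(3)$-constraint, the definition of $V^h$ as a midplane average of $y^h$, the compactness estimates of Lemma \ref{lm:100}, and the identities linking $\Omega_V$ to $\nabla_{\ttan} V$ and $\mu_V$.

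For the symmetric part, since $R^h \in SO(3)$ we have $\sym(R^h - I) = -\tfrac{1}{2}(R^h - I)^T(R^h - I)$, so Lemma \ref{lm:100}(iii) at $p = 4$ yields $\|\sym(R^h - I)\|_{L^2(S)} \lesssim h^2$. By Lemma \ref{irre} we have $h\sym\Omega_{V^h} = h\, q_{V^h}(T_S, T_S)$, and Lemma \ref{lm:100}(v) gives $\|h\sym\Omega_{V^h}\|_{L^2(S)} \lesssim h^2$. Hence $\|\sym F^h\|_{L^2(S)} \lesssim h^2$, and similarly for $\sym F_s^h$ once the corresponding midplane identity is established.

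For the skew part I treat the tangential block first. Differentiating $V^h(x) = \tfrac{1}{h}\bigl(\fint_I y^h(x + tn(x))\, dt - x\bigr)$ along a tangent direction $\tau$ and inverting \eqref{fladh} to write $\nabla y^h = \nabla_h y^h (I + ht\AA)(I_h + t\AA)^{-1}$, one checks using $\AA T_S = \AA$ and $\AA n = 0$ that $(I_h + t\AA)^{-1}(\tau + t\AA\tau) = \tau$, hence $\nabla y^h(\tau + t\AA\tau) = \nabla_h y^h(\tau + ht\AA\tau)$. Writing $\nabla_h y^h = R^h + e^h$ with $\|e^h\|_{L^2(S^1)} \lesssim h^2$ (Lemma \ref{lm:100}(i), rescaled) and exploiting the cancellation $\fint_I t\, dt = 0$ (since $I = (-\tfrac{1}{2},\tfrac{1}{2})$) yields
\begin{equation*}
(R^h - I)T_S = h\,\nabla_{\ttan} V^h + E_1^h, \qquad \|E_1^h\|_{L^2(S)} \lesssim h^2.
\end{equation*}
The normal block then follows from skew-symmetry: since $\sym(R^h - I) = O(h^2)$ in $L^2$, for any tangent $\tau$
\begin{equation*}
(R^h - I)n\cdot\tau \;=\; -(R^h - I)\tau\cdot n + O(h^2) \;=\; -h\,\partial_\tau V^h\cdot n + O(h^2) \;=\; h\,\mu_{V^h}\cdot\tau + O(h^2),
\end{equation*}
where the last equality is \eqref{idp-1}. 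Combined with $(R^h - I)n\cdot n = \sym(R^h - I)n\cdot n = O(h^2)$ and the tangency of $\mu_{V^h}$, this gives $(R^h - I)n = h\mu_{V^h} + O(h^2)$ in $L^2(S)$. Summing the two blocks yields $R^h - I = h\Omega_{V^h} + F^h$ with $\|F^h\|_{L^2(S)} \lesssim h^2$.

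For $F_s^h$, the bound \eqref{kor-a} transfers the tangential estimate directly from $V^h$ to $V_s^h$, while \eqref{kor-b} together with the definition \eqref{defmu} yields $\|\mu_{V_s^h} - \mu_{V^h}\|_{L^2(S)} \lesssim \|\nabla(V_s^h - V^h)\|_{L^2(S)} \lesssim h$, so that the normal estimate transfers as well, giving $\|F_s^h\|_{L^2(S)} \lesssim h^2$. The only computationally delicate step is the tangential calculation in the third paragraph, where one must carefully exploit the relation between $\nabla$ and $\nabla_h$ and the crucial cancellation coming from $I$ being symmetric about zero; everything else is an algebraic consequence of $R^h \in SO(3)$ and the definitions of $\Omega_V$ and $\mu_V$.
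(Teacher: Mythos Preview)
Your proof is correct and follows essentially the same route as the paper: establish the tangential block $(R^h - I)T_S = h\nabla_{\ttan}V^h\,T_S + O(h^2)$ by differentiating the midplane average and exploiting the cancellation $\int_I t\,dt = 0$, then deduce the normal block from the skew-symmetry of $R^h - I$ up to $O(h^2)$ together with \eqref{idp-1}, and finally transfer to $V_s^h$ via \eqref{kor-a}. The only cosmetic differences are that you work with the rescaled gradient $\nabla_h y^h$ on $S^1$ rather than $\nabla u^h$ on $S^h$, and you transfer the normal estimate for $V_s^h$ by bounding $\|\mu_{V_s^h}-\mu_{V^h}\|_{L^2}$ through \eqref{kor-b} instead of rederiving it from the tangential block as the paper does; both variants are straightforward and equivalent.
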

\begin{proof}
For brevity, we set $\mu_s^h = \mu_{V_s^h}$ and $\mu^h = \mu_{V^h}$.
\\
We first verify the tangential component of \eqref{tay-1}. Let $\tau$ be a $C^1$ tangent vector field along $S$. Then
by the definition of $V^h$ and using $\int_{I_h} R^h(x)t\AA(x)\ dt = 0$, we see that
$(I + h\nabla_{tan}V^h\ T_S)\tau$ equals
\begin{align*}
\dd_{\tau}(id + hV^h) &= \dd_{\tau} \left(\int_I y^h(x + tn(x))\ dt\right)
= \frac{1}{h}\dd_{\tau} \left(\int_{I_h} u^h(x + tn(x))\ dt\right)
\\
&= \frac{1}{h} \int_{I_h} \nabla u^h(x + tn(x)) (I + t\AA(x))\ dt \tau(x)
\\
&= R^h(x)\tau(x) - M^h(x)\tau(x),
\end{align*}
where we have introduced
$$
M^h(x) = -\frac{1}{h} \int_{I_h} \left(\nabla u^h(x + tn(x)) - R^h(x)\right)(I + t\AA(x))\ dt.
$$
Clearly,
\begin{align*}
\int_S |M^h|^2\ d\HH^2 &= \frac{1}{h}\int_{S\times I_h} |\nabla u^h(x + tn(x)) - R^h(x)|^2\ |I + t\AA(x)|^2\ d\HH^2(x)\ dt
\\
&\leq \frac{C}{h}\int_{S^h} |\nabla u^h(x) - R^h(x)|^2\ dx \leq Ch^4.
\end{align*}
To verify the normal component of \eqref{tay-1},

we compute using the tangential part of \eqref{tay-1}:
\begin{align*}
\tau\cdot R^h n &= - n\cdot R^h\tau + 2n\cdot (\sym R^h)\tau
\\
&= - n\cdot (R^h - I)\tau + 2n\cdot\sym\left( R^h - I \right)\tau
\\
&= - hn\cdot\dd_{\tau} V^h - n\cdot F^h\tau + 2n\cdot\sym\left( R^h - I \right)\tau
\\
&= h\mu^h\cdot\tau - n\cdot F^h\tau + 2n\cdot\sym\left( R^h - I \right)\tau.
\end{align*}
In the last step we used \eqref{idp-1}. As
\begin{equation}
\label{tay-7}
\left\|\sym\left( R^h - I \right) \right\|_{L^2(S)} \leq Ch^2
\end{equation}
by Lemma \ref{lm:100}, we conclude that
\begin{equation}
\label{tay-8}
\left\|T_S R^hn - h\mu^h\right\| \leq Ch^2.
\end{equation}
But again by \eqref{tay-7} we have
$$
\|n\cdot (R^h n - n)\|_{L^2(S)} = \left\|n\cdot\sym\left( R^h - I \right) n\right\|_{L^2(S)} \leq Ch^2.
$$
Since $R^h n = (n\cdot R^h n)\ n + T_S R^h n$,
we conclude that $R^h n$ agrees -- up to an error term whose $L^2(S)$-norm is dominated by $h^2$ -- with $n + h\mu^h$. This concludes
the proof of \eqref{tay-1}.

The tangential component of \eqref{tay-1} together with \eqref{kor-a} imply that the tangential component of \eqref{tay-2} is satisfied.
But then the normal component of \eqref{tay-2} follows from its tangential component in exactly the same way in which
the normal component of \eqref{tay-1} followed from its tangential component.
\end{proof}

\subsection{Two-scale convergence}

Recall that we extend the chart $r$ trivially from $S$ to $S^1$.

\begin{definition}[two-scale convergence]\label{def:two-scale}
We say that a sequence $g^h\in L^2(S^{1})$, weakly two-scale
converges in $L^2$ to the function $g\in
L^2(S^{1},L^2(\calY))$ as $h\to 0$, if the sequence $g^h$ is
bounded in $L^2(S^{1})$ and
 \begin{equation*}
   \lim\limits_{h\to 0}\int_{S^{1}} g^h(x)\,\psi(x, r(x)/\e)\ud
   x=\iint_{S^{1}\times \calY}g(x,y)\,\psi(x,y)\ud y\ud x
 \end{equation*}
 for all $\psi\in C_c^\infty(S^{1},C(\calY))$. We say that $g^h$
 strongly two-scale converges to $g$ if, in addition,
 \begin{equation*}
   \lim\limits_{h\to
     0}\|g^h\|_{L^2(S^{1})}=\|g\|_{L^2(S^{1}\times Y)}.
 \end{equation*}
\end{definition}

We write
$g^h\wtto g$ in $L^2$ (resp. $g^h\stto g$ in $L^2$) for weak
(resp. strong) two-scale convergence in $L^2$.

For the basic properties of two-scale convergence
we refer to \cite{Nguetseng-89,
   Allaire-92, Visintin-06}.
If $g^h \wtto g$ then $g^h\rightharpoonup \int_{\calY} g(\cdot,y)dy$ weakly in $L^2$.
If $g^h$ is bounded in $L^2(S^{1})$ then it has
subsequence which weakly two scale converges to some $g \in
L^2(S^{1};L^2(\calY))$.

The following lemma summarizes standard results about two scale convergence and adapts them to a possibly curved surface.
Its proof follows easily from the analogous statements for the planar case (see v) and vi) of Lemma \ref{L:two-scale} in the Appendix).

\begin{lemma} \label{lm:iggg111}
 \begin{enumerate}[(i)]
 \item if
$(g^h)_{h>0}\subset H^1(S^{1})$ is bounded, then
there exist $g_0 \in H^1(S^{1})$ and
$g_1 \in L^2(S^{1};\dot{H}^1(\calY))$ such that, after passing to a subsequence,
$\nabla g^h \wtto g$, where
$$
g=\nabla
g_0+ \nabla_y g_1(x,y).
$$
\item  if $(g^h)\subset H^2(S^{1}; \R^3)$ is bounded, then there exist  $ g_0 \in H^2(S^{1})$ and
$ g_1 \in L^2(S^{1};\dot{H}^2(\calY))$ such that, after passing to a subsequence,
$$
\nabla^2  g^h \wtto  g,
$$
where
$$
g=\nabla^2
 g_0+\sum_{i,j \leq 2} \big(\partial^2_{y_iy_j} g_{1}(x,y)\big) \taub^i \otimes \taub^j.
$$
\item  if $(g^h)$ is bounded in $H^1(S^{1}; \R^3)$ then we have $\nabla \bs g^h
\wtto \bs g$, along a subsequence,  and there exist $\bs g_0 \in H^1(S^{1};\R^3)$ and
$\bs g_1 \in L^2(S^{1};\dot{H}^1(\calY;\R^3))$ such that $$\bs g=\nabla
\bs g_0+\sum_{i \leq 3;j \leq 2} \big(\nabla_y \bs g_1(x,y)\big)_{ij} \taub^i \otimes \taub^j.$$
\item if $(g^h)$ is bounded in $H^2(S^{1}; \R^3)$ then we have $\nabla^2 \bs g^h
\wtto \bs g$, along a subsequence,  and there exist $\bs g_0 \in H^2(S^{1};\R^3)$ and
$\bs g_1 \in L^2(S^{1};\dot{H}^2(\calY;\R^3))$ such that $$\bs g=\nabla^2
\bs g_0+\sum_{i \leq 3;j,k \leq 2} \big(\partial^2_{y_jy_k} \bs g_{1,i}(x,y)\big) \taub^i \otimes \taub^j \otimes \taub^k$$
\end{enumerate}
\end{lemma}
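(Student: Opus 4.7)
The plan is to reduce all four assertions to their planar analogues (Lemma~\ref{L:two-scale}\,(v)--(vi) in the Appendix) by pulling back via the $h$-independent $C^2$-diffeomorphism $r_e:S^{1}\to\Omega$ introduced in Section~\ref{drugi}.

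\textbf{Step 1 (pull-back).} Given $(g^h)$ bounded in $H^k(S^{1})$ with $k\in\{1,2\}$, set $\tilde g^h := g^h\circ r_e^{-1}$. Since $r_e$ and $r_e^{-1}$ are of class $C^2$ and independent of $h$, the sequence $(\tilde g^h)$ is bounded in $H^k(\Omega)$ with norms comparable to those of $(g^h)$. Because $r(r_e^{-1}(\xi,t))=(\xi_1,\xi_2)$, the oscillation variable $r(x)/\e$ on $S^{1}$ pulls back to the standard planar oscillation $\xi/\e$ on $\Omega$. Hence every admissible test function $\varphi(x,r(x)/\e)$ on $S^{1}$ pulls back to a planar two-scale test function $\varphi(r_e^{-1}(\xi,t),\xi/\e)$ on $\Omega$, and weak two-scale convergence on $S^{1}$ is equivalent, via $r_e$, to the standard planar weak two-scale convergence on $\Omega$.

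\textbf{Step 2 (planar result).} Applying Lemma~\ref{L:two-scale}\,(v) for the first-order assertions (i), (iii) and Lemma~\ref{L:two-scale}\,(vi) for the second-order assertions (ii), (iv) to $\tilde g^h$ on $\Omega$, one obtains, along a subsequence, a macroscopic limit $\tilde g_0\in H^k(\Omega)$ and a microscopic corrector $\tilde g_1\in L^2(\Omega;\dot H^k(\calY))$ of zero mean in $y$, such that $\nabla\tilde g^h$ (respectively $\nabla^2\tilde g^h$) weakly two-scale converges to the sum of the corresponding macroscopic and microscopic parts, the microscopic part involving only $y$-derivatives of $\tilde g_1$ because the $\e$-oscillation acts solely in the $\xi$-directions.

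\textbf{Step 3 (push-forward and tangent-frame identification).} Setting $g_0:=\tilde g_0\circ r_e$ and $g_1(x,y):=\tilde g_1(r_e(x),y)$ and using $g^h=\tilde g^h\circ r_e$, the chain rule yields $\nabla g^h=(\nabla\tilde g^h)(r_e)\,\nabla r_e$, where $\nabla r_e$ is given by \eqref{eq:500}. In the two-scale limit, the macroscopic part collapses to $\nabla g_0$ by the chain rule applied to $g_0$. For the microscopic part, the two-dimensional gradient $\nabla_y\tilde g_1$ couples only to the first two rows of $\nabla r_e$, namely $\tau^1$ and $\tau^2$; this produces precisely the tangent-frame structure $\sum_{j\leq 2}(\partial_{y_j}g_1)\,\tau^j$ asserted in (i) and (iii). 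For (ii) and (iv), iterating the chain rule introduces two factors of $\nabla r$ into the microscopic contribution, yielding the $\tau^j\otimes\tau^k$ contractions with $\partial^2_{y_jy_k}g_1$ stated there.

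\textbf{Main obstacle.} The only delicate bookkeeping appears in the second-order assertions: one must check that the lower-order terms generated by differentiating the $h$-independent $C^1$ factor $\nabla r_e$ contribute exclusively to $\nabla^2 g_0$ on the macroscopic side, rather than leaking into the corrector. This follows because the product of a smooth $x$-dependent factor with a weakly two-scale convergent sequence is again weakly two-scale convergent, with limit obtained by pointwise multiplication; the macroscopic part of the resulting expression then equals $\nabla^2 g_0$ by the chain rule applied to $g_0=\tilde g_0\circ r_e$.
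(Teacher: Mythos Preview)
Your proposal is correct and takes essentially the same approach as the paper, which merely states that the result ``follows easily from the analogous statements for the planar case'' in Lemma~\ref{L:two-scale}\,(v)--(vi). Your three-step outline (pull back via $r_e$, apply the planar result on $\Omega$, push forward using the chain rule and the frame $\tau^j$) makes explicit exactly what the paper leaves as a one-line remark.
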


\section{Two-scale compactness and lower bound} \label{Compactness}

Next we will identify the space of possible two scale limits of
symmetrized gradients. The following auxiliary result is standard  and it can be
easily derived, e.g., using Fourier transforms.

\begin{lemma} \label{lm:1}
Let $\B \in L^2(\omega;L^2(\calY;\R^{2 \times 2}_{\sym}))$ have the following property:
for every
$$
\Psib \in C_0^{\infty}(\omega; C^{\infty}(\calY;\R^{2 \times 2}_{\sym}))
$$
satisfying
\begin{equation}
\Psib(\xi,y)=\psi(\xi) \cof \nabla^2 F(y),
\end{equation}
for some $\psi \in C_c^\infty(\omega)$, $F\in
C^\infty(\calY)$ such that
$
\int_{\calY}  F(y) \ud y=0,
$

we have
$$
\iint_{\omega \times \calY} \B(\xi,y) :\Psib(x,y) \ud y \ud \xi
= 0.
$$
Then there exist unique $\B \in L^2(\omega;\R^{3 \times 3}_{\sym})$ and
$\wa \in L^2(\omega;\dot{H}^1(\calY;\R^2))$ such that
$$
\B(\xi,y)=\B(\xi)+\sym \nabla_y \wa(\xi,y)
$$
\end{lemma}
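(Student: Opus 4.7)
The plan is to show that the hypothesis forces $\B(\xi,\cdot)$ to lie, for a.e.\ $\xi\in\omega$, in the $L^2(\calY;\R^{2\times 2}_{\sym})$-orthogonal complement of $\{\cof\nabla_y^2 F:F\in\dot{H}^2(\calY)\}$, and then to identify this complement with $\R^{2\times 2}_{\sym}\oplus\sym\nabla_y\dot{H}^1(\calY;\R^2)$. Starting from the hypothesis with $\Psib(\xi,y)=\psi(\xi)\cof\nabla_y^2 F(y)$ and letting $\psi\in C_c^\infty(\omega)$ be arbitrary, one obtains
\begin{equation*}
\int_{\calY}\B(\xi,y):\cof\nabla_y^2 F(y)\,\ud y=0\quad\text{for a.e.\ }\xi\in\omega,
\end{equation*}
for every fixed $F\in C^\infty(\calY)$ of zero mean; separability of $\dot{H}^2(\calY)$ then lets a single null set serve for all such $F$, and by continuity this extends to every $F\in\dot{H}^2(\calY)$.

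The core ingredient is the $L^2(\calY;\R^{2\times 2}_{\sym})$-orthogonal decomposition
\begin{equation*}
L^2(\calY;\R^{2\times 2}_{\sym})=\R^{2\times 2}_{\sym}\ \oplus\ \sym\nabla_y\dot{H}^1(\calY;\R^2)\ \oplus\ \cof\nabla_y^2\dot{H}^2(\calY).
\end{equation*}
Orthogonality of the constants with each of the other two summands is immediate from periodicity. The orthogonality of $\sym\nabla_y \wa$ with $\cof\nabla_y^2 F$ is obtained by integration by parts, using the two-dimensional identity that $\cof\nabla_y^2 F$ is row-wise divergence-free. Completeness is verified on Fourier modes: for each $k\in\Z^2\setminus\{0\}$ the three-dimensional space $\R^{2\times 2}_{\sym}$ decomposes as the orthogonal sum of the two-dimensional image of $a\mapsto\sym(a\otimes k)$ (injective because $k\neq 0$) and the one-dimensional span of $\cof(k\otimes k)$, and a direct computation confirms the orthogonality of these summands. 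Closedness of each image follows from the Fourier identity $|\widehat{\cof\nabla_y^2 F}(k)|=|k|^2|\hat F(k)|$ and from Korn's inequality on the torus in the form $\|\nabla_y \wa\|_{L^2}^2\leq 2\|\sym\nabla_y \wa\|_{L^2}^2$ for mean-zero $\wa$, which itself follows from $|\sym(a\otimes k)|^2\geq\tfrac{1}{2}|a|^2|k|^2$.

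Applying the decomposition pointwise in $\xi$, together with the vanishing of the $\cof\nabla_y^2$-component derived in the first paragraph, yields
\begin{equation*}
\B(\xi,y)=\bar \B(\xi)+\sym\nabla_y \wa(\xi,y),
\end{equation*}
where $\bar \B(\xi):=\int_{\calY}\B(\xi,y)\,\ud y$ is manifestly in $L^2(\omega;\R^{2\times 2}_{\sym})$, and $\wa(\xi,\cdot)\in\dot{H}^1(\calY;\R^2)$ is constructed modewise, i.e.\ by solving a Stokes-type problem on $\calY$ with data $\B(\xi,\cdot)-\bar \B(\xi)$. Since the three projectors are bounded linear operators on $L^2(\calY;\R^{2\times 2}_{\sym})$, they commute with integration in $\xi$, giving $\wa\in L^2(\omega;\dot{H}^1(\calY;\R^2))$ with norm controlled by $\|\B\|_{L^2(\omega\times\calY)}$. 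Uniqueness is then immediate from the orthogonality of the three summands: averaging a putative zero sum over $\calY$ gives $\bar \B\equiv 0$, whereupon $\sym\nabla_y \wa=0$ combined with periodicity and the zero-mean condition forces $\wa=0$. The only genuinely nontrivial input is the modewise Fourier decomposition of $\R^{2\times 2}_{\sym}$ into $\sym(a\otimes k)$ and $\cof(k\otimes k)$; everything else is straightforward bookkeeping.
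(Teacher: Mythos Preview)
Your proof is correct and follows exactly the route the paper indicates: the paper does not give a detailed argument but states that the result ``is standard and can be easily derived, e.g., using Fourier transforms,'' and your modewise decomposition of $\R^{2\times 2}_{\sym}$ into $\{\sym(a\otimes k):a\in\R^2\}\oplus\R\,\cof(k\otimes k)$ is precisely that Fourier-transform proof written out in full. The only remark is that the target space $\R^{3\times 3}_{\sym}$ in the statement is a typo for $\R^{2\times 2}_{\sym}$, which you silently (and correctly) corrected.
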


In what follows, we will use the notation $\otto$ introduced in the appendix. We prove the next proposition in local
coordinates; of course, this is equivalent to performing computations on the level of the surface.

\begin{proposition} \label{tm:1}

Let $(\wa^h)$ be a bounded sequence in $H^2(S;\R^3)$ such that
$$
\tfrac{1}{h}q_{\wa^h}
$$
is bounded in $L^2(S; \mathbb{S})$. Then there exist
$w_0 \in H^2(S)$, $\wa_{1} \in L^2(S;\dot{H}^2(\mathcal{Y};\R^3))$ and $\B \in \dot{L}^2(S \times Y;\mathbb{S})$ such that,
after passing to a subsequence,
\begin{equation} \label{nnnaknadno}
\nabla_{\ttan} \nabla_{\ttan} \wa^h \wtto \nabla_{\ttan} \nabla_{\ttan} \wa_0 +\sum_{i \leq 3,j,k \leq 2} \big(\partial^2_{y_jy_k} \wa_{1,i}(x,y)\big) \taub^i \otimes \taub^j \otimes \taub^k
\end{equation}
and
$$
\tfrac{1}{h}q_{\wa^h} \wtto \B.
$$
Set $\B_w =\int_Y \B(\cdot,y)\ud y$. Then the following are true:
\begin{enumerate}[(i)]
\item If $h\gg\e^2$ then there exists a unique
$v \in L^2(S;\dot{H}^1(\mathcal{Y};\R^2))$ such that
\begin{equation*} \label{eq:9}
\B = \B_w + \sum_{i,j=1,2} \left(\sym\nabla_y v\right)_{ij} \taub^i \otimes \taub^j.
\end{equation*}
\item If $h\sim\e^2$ and if we set $\lim_{h \to 0} \tfrac{\eh^2}{h}=\tfrac{1}{\gamma_1}$, then there exists a unique
$v \in L^2(S;\dot{H}^1(\mathcal{Y};\R^2))$ such that
$$
B = B_w +  \sum_{i,j=1,2} \left(\sym\nabla_y v\right)_{ij} \taub^i \otimes \taub^j +
\frac{1}{\gamma_1} \wa_{1,3} \AA
$$
\item If $h \ll \e^2$, then there exists a unique
$v \in L^2(S;\dot{H}^1(\mathcal{Y};\R^2))$ such that
$$
\sum_{i,j=1,2} \left(\sym\nabla_y v\right)_{ij} \taub^i \otimes \taub^j + (\wa_{1,3}) \AA = 0.
$$
\end{enumerate}

\end{proposition}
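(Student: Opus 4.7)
\emph{Step 1 (Compactness and reduction).} Since $(w^h)\subset H^2(S;\R^3)$ is bounded, Lemma~\ref{lm:iggg111}(iv) extracts, along a subsequence, $w_0\in H^2(S;\R^3)$ and $w_1\in L^2(S;\dot H^2(\calY;\R^3))$ for which \eqref{nnnaknadno} holds. The $L^2$-boundedness of $\tfrac1h q_{w^h}$ gives a further subsequence with $\tfrac1h q_{w^h}\wtto B\in L^2(S\times\calY;\mathbb{S})$. By Rellich's theorem $\nabla w^h\to\nabla w_0$ strongly in $L^2$, while $q_{w^h}=h\cdot(q_{w^h}/h)\to 0$ strongly, so $q_{w_0}=0$: $w_0$ is an infinitesimal bending. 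Subtracting $w_0$ leaves both $q_{w^h}$ and the oscillating Hessian invariant, so one may assume $w_0=0$; then $\bar w^h\to 0$ strongly in $H^1(\omega;\R^3)$.

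\emph{Step 2 (Tangential part of $w_1$ vanishes).} Throughout $h\ll\e$, so $\e/h\to\infty$. Formally $\bar w^h\approx\e^2\bar w_1(\xi,\xi/\e)$ gives $q_{w^h}\approx\e\,\sym((\nabla\psi)^T\nabla_y\bar w_1)$, so that $q_{w^h}/h\approx(\e/h)\sym((\nabla\psi)^T\nabla_y\bar w_1)$; boundedness of $q_{w^h}/h$ in $L^2$ together with $\e/h\to\infty$ forces $\sym((\nabla\psi)^T\nabla_y\bar w_1)=0$. Using $\partial_\alpha\psi\cdot n=0$ this reduces to $\sym\nabla_y\bar V=0$ for the tangential covariant components $\bar V_\alpha:=\sum_{i=1,2} g_{\alpha i}w_{1,i}$, where $g_{\alpha i}=\taub_\alpha\cdot\taub_i$. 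Periodicity with zero mean on the torus forces $\bar V=0$, and invertibility of $g$ then gives $w_{1,1}=w_{1,2}=0$. Hence $w_1 = w_{1,3}\, n$. (Rigorously this is established by testing $\tfrac1h q_{w^h}$ against non-divergence-free matrix fields and observing that the otherwise-unbounded $\e/h$-prefactor of the IBP remainder must vanish.)

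\emph{Step 3 (Piola test and normal contribution).} For $\chi\in C_c^\infty(\omega)$, $F\in\dot H^2(\calY)$ smooth, set $\Phi(y)=\cof\nabla^2_y F(y)$; by Piola $\partial_{y_\beta}\Phi^{\alpha\beta}=0$. Starting from $q_{w^h}=\sym((\nabla\psi)^T\nabla\bar w^h)$, one integration by parts in $\xi$ transfers $\nabla$ off $\bar w^h$; using $(\cof\nabla^2_y F)^{\alpha\beta}(\xi/\e)=\e^2\epsilon^{\alpha\gamma}\epsilon^{\beta\delta}\partial^2_{\xi_\gamma\xi_\delta}[F(\xi/\e)]$ and two further integrations by parts converts $\bar w^h$ back into $\nabla^2\bar w^h$ at the cost of a prefactor $\e^2$, the remainders being $o(\e^2)$ thanks to the strong $H^1$-convergence. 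Dividing by $h$ and passing to the two-scale limit via \eqref{nnnaknadno}, one applies the Gauss formula $\partial_\alpha\partial_\beta\psi=\Gamma^\gamma_{\alpha\beta}\partial_\gamma\psi-\AA_{\alpha\beta}n$: Step~2 annihilates every tangential contribution (each involves $\bar V_\mu=0$), leaving only the normal piece $-\chi\AA_{\alpha\beta}n^k$. After integration by parts in $y$ and using $n\cdot w_1=w_{1,3}$,
\begin{equation*}
\int_\omega\!\int_\calY B:\chi\Phi\,dy\,d\xi\;=\;\lim_{h\to 0}\frac{\e^2}{h}\int_\omega\!\int_\calY w_{1,3}\,\AA\,:\,\chi\Phi\,dy\,d\xi.
\end{equation*}

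\emph{Step 4 (Trichotomy via Lemma~\ref{lm:1}).} If $h\gg\e^2$, $\e^2/h\to 0$ and the right-hand side vanishes for every admissible $\chi,F$; Lemma~\ref{lm:1} applied to $B$ yields $B=B_w+\sum_{ij}(\sym\nabla_y v)_{ij}\taub^i\otimes\taub^j$, which is (i). If $h\sim\e^2$ with $\e^2/h\to 1/\gamma_1$, the identity rearranges to $\int\!\int(B-\tfrac1{\gamma_1}w_{1,3}\AA):\chi\Phi\,dy\,d\xi=0$; Lemma~\ref{lm:1} applied to $B-\tfrac1{\gamma_1}w_{1,3}\AA$ (whose $y$-mean is $B_w$ since $w_{1,3}$ has zero mean) gives (ii). If $h\ll\e^2$, divergence of the prefactor forces $\int\!\int w_{1,3}\AA:\chi\Phi\,dy\,d\xi=0$ by itself; Lemma~\ref{lm:1} applied to $w_{1,3}\AA$ yields the constraint $\sum_{ij}(\sym\nabla_y v)_{ij}\taub^i\otimes\taub^j+w_{1,3}\AA=0$ of (iii). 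Uniqueness of $v\in L^2(S;\dot H^1(\calY;\R^2))$ follows from the fact that on the torus $\sym\nabla_y v=0$ with zero mean forces $v=0$. The main obstacle is the rigorous version of Step~2: establishing that the tangential covariant components of $w_1$ must vanish, which requires a careful two-scale argument balancing the diverging prefactor $\e/h$ against the asymptotics of $\bar w^h$. With Step~2 in place, the tangential contributions in Step~3 vanish identically via the Gauss decomposition, cleanly isolating the $w_{1,3}\AA$ normal contribution.
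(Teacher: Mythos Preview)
Your outline is broadly on the right track---the Piola/cofactor test together with Lemma~\ref{lm:1} is exactly the mechanism the paper uses---but Step~2 contains a genuine error that propagates into Step~3 and breaks your treatment of case~(i).

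You write ``Throughout $h\ll\e$, so $\e/h\to\infty$,'' and use this to force $w_{1,1}=w_{1,2}=0$. But $h\ll\e$ is \emph{not} a hypothesis of the proposition. Case~(i) only assumes $h\gg\e^2$, which covers the entire regime $\gamma=\lim h/\e\in(0,\infty]$ of the main theorem, where $h\sim\e$ or even $h\gg\e$. In those regimes $q_{w^h}/\e=(h/\e)(q_{w^h}/h)$ need not tend to zero, your boundedness argument collapses, and there is no reason for the tangential components $\bar w_1$ to vanish. Since your Step~3 explicitly relies on Step~2 to kill the tangential contributions arising from the Gauss decomposition, your key identity is unproven in case~(i).

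The paper handles this differently. It never claims $\bar w_1=0$ in case~(i). Instead, the computation labelled \eqref{jedandan1} shows---using only the orthogonality $\int\sym\nabla\bar w^h:\cof\nabla^2(\cdot)=0$ together with $\tfrac{1}{\e}\sym\nabla\bar w^h\otto\sym\nabla_y\bar w_1$ and $\tfrac{\bar w^h}{\e^2}\otto\bar w_1$---that $\tfrac{1}{\e^2}\int\sym((\nabla\psi)^T\nabla\tilde w^h_{\ttan}):\chi\,(\cof\nabla_y^2F)(\cdot/\e)$ has a \emph{finite} limit depending on $\bar w_1$. Multiplying by the prefactor $\e^2/h\to 0$ then annihilates the whole tangential side regardless of whether $\bar w_1$ vanishes; the normal side is handled the same way via \eqref{merd-3}. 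Only in cases~(ii) and~(iii) does the paper invoke $h\ll\e$ (which there is genuinely available) to deduce $\bar w_1=0$ via Korn's inequality, exactly as in your Step~2.

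A secondary issue: your Step~3 identity carries remainders you estimate as ``$o(\e^2)$''. After dividing by $h$, this becomes $o(\e^2)/h$, which in case~(iii) with $h\ll\e^2$ is indeterminate and need not vanish; thus your ``divergence of the prefactor'' argument in Step~4(iii) is not justified as written. The paper avoids this by working with $q^h/\e^2$ rather than $q^h/h$ in case~(iii): since $q^h/\e^2\to 0$ strongly, the left-hand side of the analogue of \eqref{merd-5} vanishes directly, and one reads off $\int\!\int w_{1,3}\AA:\chi\Phi=0$ without any limiting-prefactor gymnastics.
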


\begin{proof}
The existence of $w_0$ and $w_1$ follows from Lemma \ref{lm:iggg111}.
In local coordinates (\ref{nnnaknadno}) can be expressed as
$$ \nabla^2 \tilde{w}^h \wtto \nabla^2 \tilde{w}_0+\nabla^2_y \check{w}_1,$$
where $\tilde{w}^h=w^h\circ \psi$, $\tilde{w}_0=w_0 \circ \psi$, $\check{w}_{1}=\tilde{w}_{1,i}  \tilde{\taub}^i$, $\tilde{w}_{1}= w_{1}\circ \psi$, $\tilde{\taub}^i=\taub^i \circ \psi$ where by slight abuse of notation we write $(w_{1,i}\circ \psi)(x,y)=w_{1,i}(\psi(x),y)$, for $(x,y) \in \omega \times \calY$. Denote also $\bar{w}^h_\alpha=\tilde{w}^h \cdot \partial_\alpha \psi$, $\bar{w}_{0,\alpha}=w_0 \cdot \partial_\alpha \psi$, for $\alpha=1,2$ and $\bar{w}_{1}=(\check{w}_{1,1},\check{w}_{1,2})$.
By Lemma \ref{L:two-scale-h1} in the appendix we have
\begin{equation}
\label{merd-2}
\frac{1}{\e} \sym \nabla \bar{w}^h \otto  \sym \nabla_y \bar{w}_1, \quad \frac{1}{\e} \sym \left((\nabla \psi)^T \nabla\o{w}^h_{\ttan} \right) \otto  \sym \nabla_y \bar{w}_1.
\end{equation}
and
\begin{equation}
\label{merd-3}
\frac{\tilde{w}^h}{\e^2} \otto \check{w}_1, \quad \frac{\bar{w}^h}{\e^2} \otto \bar{w}_1, \quad \frac{\tilde{w}^h\cdot n}{\e^2} \otto \tilde{w}_{1,3}.
\end{equation}

Now let $F\in C^{\infty}(\mathcal{Y})$ and $\p\in C^{\infty}_0(\omega)$.
With \eqref{merd-1} in mind, we compute:
\begin{align*}
&\int_{\omega} \sym  \nabla\bar{w}^h  : (\cof \nabla^2 F)\left( \frac{\cdot}{\e} \right) \p
\\
&= \e^2\int_{\omega}   \sym  \nabla\bar{w}^h  : \cof
\left[
\nabla^2 \left( F\left( \frac{\cdot}{\e} \right) \p\right)
- \frac{2}{\e} (\nabla F)\left( \frac{\cdot}{\e} \right)\otimes \nabla\p - F\left( \frac{\cdot}{\e}\right) \nabla^2\p
\right]
\\
&= -\e^2\int_{\omega}  \sym  \nabla\bar{w}^h: \cof
\left[\frac{2}{\e} (\nabla F)\left( \frac{\cdot}{\e} \right)\otimes \nabla\p + F\left( \frac{\cdot}{\e}\right) \nabla^2\p
\right].
\end{align*}

We used that the term  $\sym \nabla\o{w}^h $ is
$L^2$-orthogonal to test matrix fields of the form $\cof \nabla^2 F$. From this and from \eqref{merd-1} and \eqref{merd-3} we deduce that
\begin{align}\label{jedandan1}
 &\int_{\omega} \frac{\sym \left((\nabla \psi)^T \nabla\o{w}^h_{\ttan} \right)}{\e^2} :(\cof \nabla^2 F) \left(\frac{\cdot}{\e} \right) \p
\\ \nonumber
&= - \int_{\omega} \frac{\sym \nabla\bar{w}^h}{\e} : \cof
\left[2 (\nabla F)\left( \frac{\cdot}{\e} \right)\otimes \nabla\p + \e F\left( \frac{\cdot}{\e}\right) \nabla^2\p
\right]
- \int_{\omega} \frac{\Gamma\cdot\bar{w}^h}{\e^2} : (\cof \nabla^2 F)\left( \frac{\cdot}{\e} \right) \p
\\ \nonumber
&\to - \int_{{\omega}\times Y}
\left[
{\sym_y \nabla \bar{w}_1}(\cdot, y) : \cof
\left[2 (\nabla F)\left( y \right)\otimes \nabla\p \right]
+
(\Gamma\cdot\bar{w}_1(\cdot, y)) : (\cof \nabla^2 F)\left( y \right) \p
\right].
\end{align}
Recall the identity
\begin{equation} \label{jedandan}
q_{w^h}\equiv \sym \left((\nabla \psi)^T \nabla\o{w}^h \right)= \sym \left((\nabla \psi)^T \nabla\o{w}^h_{\ttan} \right)+(\tilde{w}^h \cdot n) \AA.
\end{equation}
Assume first that $h \gg \e^2$. From the assumption $\frac{1}{h} q_{w^h} \wtto B$ and (\ref{merd-3}), (\ref{jedandan1}) and Lemma \ref{lm:1}
we conclude that there exists $\check{v} \in L^2(\omega;\dot{H}^1(\calY;\R^3))$ such that
$$ \sym \left((\nabla \psi)^T \nabla\o{w}^h \right) \wtto \tilde{B}_w+\sym \nabla_y \check{v}. $$
Here $\tilde{B}_w=B_w \circ \psi$. This is i) in local coordinates, after defining $v_i=(\check{v} \circ \psi^{-1})\cdot \taub_i$ for $i=1,2,3$.
Case ii) we conclude as follows: By dividing the identity (\ref{jedandan}) by $\e$ and using
$$
\frac{q^h}{\e} \to 0 \mbox{ strongly in }L^2,\quad \text{when } h \ll \e,
$$
as well as (\ref{merd-2}) and (\ref{merd-3}) we conclude that $\bar{w}_1=0$ from Korn's inequality.
Using (\ref{jedandan1}) and the identity (\ref{jedandan}), after dividing it by $\e^2$, we conclude
\begin{align}
\nonumber
\lim_{h\to 0}\int_{\omega} \frac{\sym \left((\nabla \psi)^T \nabla\o{w}^h \right) }{\e^2} : (\cof \nabla^2 F)\left( \frac{\cdot}{\e} \right) \p
&= \lim_{h\to 0} \int_{\omega} (\frac{\tilde{w}^h}{\e^2}\cdot n)\AA : (\cof\nabla^2 F)\left( \frac{\cdot}{\e} \right)\p
\\
\label{merd-5}
&= \int_{{\omega}\times Y} \tilde{w}_{1,3}(\cdot, y)\AA: (\cof \nabla^2F)(y)\p.
\end{align}
Lemma \ref{lm:1} again shows that there exists $v$ such that
$$
\frac{q^h}{h} \wtto B_w + \sum_{i,j=1}^2(\sym\nabla_y v)_{ij} \taub^i \otimes \taub^j + \left( \lim_{h\to 0} \frac{\e^2}{h} \right) w_{1,3} \AA.
$$

For the case (iii) we argue similarly: as in the case of ii) we conclude $\bar{w}_1=0$. Also we know that
$
{q^h}/{\e^2} \to 0
$
strongly in $L^2$. Hence the left-hand side of \eqref{merd-5} converges to zero, so
$$
 \int_{{\omega}\times Y} \tilde{w}_{1,3} (\cdot, y) \AA: (\cof \nabla^2F)(y)\p = 0,
$$
which by Lemma \ref{lm:1} implies the claim.
\end{proof}

\begin{lemma}
\label{lm:50}
Let $(\wa^h)_{h>0} \subset H^1(S^{1};\R^3)$ be such that
    \begin{equation*}
      \limsup\limits_{h\to 0}\left( \,\|\wa^h\|_{L^2}+\|\nabla_h\wa^h\|_{L^2}\,\right)<\infty.
    \end{equation*}
    Then there exists a map $\wa_0 \in H^1(S;\R^3)$ and a field $\bs H_\gamma \in L(S \times I \times \calY; \R^{3 \times 3})$ such that
  \begin{equation}\label{P:comp1}
    \bs H_\gamma=
    \left\{\begin{aligned}
        &\sum_{i,j=1}^3\left(\,\nabla_y\wa_1,\wa_2 \right)_{ij} \taub^i \otimes \taub^j
        &&\text{for some }
        \left\{\begin{aligned}
            &\wa_1\in L^2(S;\dot{H}^1(\mathcal Y;\R^3))\\
            &\wa_2\in L^2(S \times Y \times I; \R^3))
          \end{aligned}\right\}\\
        &&&\text{if $\gamma=0$},\\
        &\sum_{i,j=1}^3\left(\,\nabla_y\wa_1,\tfrac{1}{\gamma}\partial_3\wa_1\,\right)_{ij} \taub^i \otimes \taub^j
        &&\text{for some $\wa_1\in L^2(S ,\dot{H}^1(I\times\mathcal Y; \R^3))$}\\
        &&&\text{if $\gamma\in(0,\infty)$},\\
        &\sum_{i,j=1}^3 \left(\,\nabla_y\wa_1,\wa_2\,\right)_{ij} \taub^i \otimes \taub^j
        &&\text{for some }
        \left\{\begin{aligned}
            &\wa_1\in L^2(S \times I; \dot{H}^1(\mathcal Y; \R^3))\\
            &\wa_2\in L^2(S \times I;\R^3))
          \end{aligned}\right\}\\
        &&&\text{if $\gamma=\infty$},\\
      \end{aligned}\right.
  \end{equation}
  such that, up to a
    subsequence, $\wa^h\to\wa_0$ in $L^2$ and
    \begin{equation*}
      \nabla_h\wa^h\wtto \nabla_{\ttan}\wa_0\, T_S +\bs H_\gamma \qquad\text{weakly
        two-scale in }L^2.
    \end{equation*}
Here, $\wa_0$ is the weak limit in $H^1(S)$ of
$\int_I \wa^h(x+tn(x))dt$.
\end{lemma}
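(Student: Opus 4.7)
The argument has three parts: strong $L^2$-convergence of $w^h$ to $w_0$, identification of the two-scale limit of $\nabla_h w^h$ in the flat setting after pullback to $\Omega$, and transfer of the limit back to the shell $S^1$ via the moving frame. The three listed forms for $\bs H_\gamma$ correspond exactly to the three standard planar regimes for combined dimension reduction and periodic homogenization.

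\textbf{Strong convergence.} The bound $\|\nabla_h w^h\|_{L^2}\lesssim 1$ says that the tangential derivatives $\partial_{\tau_i}w^h$ are bounded in $L^2(S^1)$ while $\|\partial_n w^h\|_{L^2(S^1)}\lesssim h$. Poincar\'e's inequality along the normal fibers $\{x+tn(x):t\in I\}$ therefore gives
$$
\left\| w^h - \int_I w^h(\cdot+tn(\cdot))\,dt \right\|_{L^2(S^1)}\lesssim h.
$$
The average $\bar w^h(x):=\int_I w^h(x+tn(x))\,dt$ is bounded in $H^1(S)$ by Fubini together with the tangential bound, so along a subsequence $\bar w^h\rightharpoonup w_0$ weakly in $H^1(S)$ and strongly in $L^2(S)$; extending $w_0$ trivially along the normal, the displayed inequality yields $w^h\to w_0$ strongly in $L^2(S^1)$.

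\textbf{Two-scale limit in flat coordinates.} Set $\widetilde w^h(\xi,t):=w^h(\psi(\xi)+tn(\psi(\xi)))$ on $\Omega=\omega\times I$. By \eqref{eq:400}--\eqref{eq:500} and \eqref{fladh}, together with the uniform bound $1/2<|I+t\AA|<3/2$, the planar rescaled gradient $(\partial_1\widetilde w^h,\partial_2\widetilde w^h,\tfrac{1}{h}\partial_3\widetilde w^h)$ is uniformly bounded in $L^2(\Omega)$. Apply the flat two-scale compactness result for combined dimension reduction and periodic homogenization (Lemma~\ref{L:two-scale} in the Appendix; see also \cite{Neukamm-10,Horneuvel12,NeuVel-12}): along a subsequence,
$$
(\partial_1\widetilde w^h,\partial_2\widetilde w^h,\tfrac{1}{h}\partial_3\widetilde w^h) \wtto (\nabla_\xi\widetilde w_0\mid 0) + \widetilde{\bs H}_\gamma,
$$
where $\widetilde{\bs H}_\gamma$ has precisely the structure listed in \eqref{P:comp1} (read in the flat basis $e_i\otimes e_j$). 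The three regimes encode which of the fast variables --- the oscillation scale $y=\xi/\e$ and the rescaled thickness $t$ --- dominates: in the critical case $\gamma\in(0,\infty)$ a single corrector $w_1(x,t,y)$ governs both the $\nabla_y$ and the $\tfrac{1}{\gamma}\partial_3$ parts, whereas for $\gamma=0$ (resp.\ $\gamma=\infty$) the two fast scales decouple and the normal component $w_2$ is an independent $L^2$-field.

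\textbf{Transfer to $S^1$ and main obstacle.} By \eqref{fladh} the shell rescaled gradient $\nabla_h w^h$ differs from the planar rescaled gradient of $\widetilde w^h$ only by right-multiplication with the matrix $(I_h+t\AA)(I+ht\AA)^{-1}$, re-expressed in the moving basis $(\tau_1,\tau_2,n)$; as $h\to 0$ this matrix converges uniformly to the map sending $e_i$ to $\tau_i$ for $i=1,2$ and $e_3$ to $n$. Therefore the flat two-scale limit transfers column-by-column in the dual basis $(\tau^1,\tau^2,\tau^3)$: the first two columns produce $\nabla_{tan}w_0\,T_S$, while the corrector becomes $\sum_{i,j=1}^3(\widetilde{\bs H}_\gamma)_{ij}\tau^i\otimes\tau^j=\bs H_\gamma$ exactly as in \eqref{P:comp1}. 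The central technical point is the intermediate regime $\gamma\in(0,\infty)$, where one must show that the \emph{same} $w_1(x,t,y)$ governs both tangential and normal components of the corrector; this is achieved by testing the rescaled gradient against oscillating fields of the form $\e\Psi(\xi,t,\xi/\e)$ and exploiting that the two-scale limit of a bounded sequence of \emph{gradients} must be curl-free jointly in the fast variables $(y,t)$, following the arguments of \cite{Neukamm-10,Horneuvel12}.
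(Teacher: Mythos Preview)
Your proposal is correct and follows essentially the same route as the paper: pull back to the flat reference domain $\Omega$ via $r_e$, invoke the known planar two-scale compactness result for combined dimension reduction and homogenization (Proposition~6.3.5 in \cite{Neukamm-10}), and then transfer the limit back to $S^1$ using the chain rule \eqref{eq:400}--\eqref{eq:500} and the fact that $(I+th\AA)^{-1}\to I$ uniformly. The paper's proof is terser --- it only writes out the case $\gamma\in(0,\infty)$ and cites \cite{Neukamm-10} directly rather than Lemma~\ref{L:two-scale} (which covers only standard two-scale convergence, not the thickness-coupled version) --- but the content is the same.
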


\begin{proof}

The lemma is an analogue of Proposition~6.3.5 in \cite{Neukamm-10}, adapted to  the manifold $S$ and the definition of two scale convergence on the manifold. Thus we will only prove the case $\gamma \in  (0,\infty)$.
Since the sequence is bounded in $H^1$ norm there exists a weak
limit $\wa_0 \in H^1¸(S^{1};\R^3)$. Let us denote by
$\tilde{\wa}^h,\ \tilde{\wa}_0$  the elements of
$H^1(\Omega;\R^3)$,
 defined by:
\begin{equation}
\tilde{\wa}^h=\wa^h \circ \ra_e^{-1}, \quad \tilde{\wa}_0=\wa_0 \circ \ra_e^{-1}.
\end{equation}
By Proposition~6.3.5 in \cite{Neukamm-10}, which is proved for planar domains, $\tilde{\wa}_0$ does not depend on $t$
and there exists $\check{\wa}_1 \in L^2(\omega;\dot{H}^1(I \times
\calY;\R^3))$ such that:
\begin{equation} \label{eq:2000}
\nabla_h \tilde{\wa}^h \wtto
(\nabla_\xi \tilde{\wa}_0,0)+( \nabla_y
\check{\wa}_1,\tfrac{1}{\gamma}
\partial_t \check{\wa}_1).
\end{equation}
Then we have that $\breve{\wa}_1 \in L^2(S;\dot{H}^1(I
\times\calY;\R^3))$. Using (\ref{eq:400}) and (\ref{fladh}) we
conclude
\begin{eqnarray}
\nabla_h \wa^h &=& \nabla \wa^h (\I+t\AA)(\I_h+th\AA)^{-1}
\\
\nonumber&=&  \nabla \tilde{\wa}^h[\taub^1,\taub^2,\taub^3]^T (\I+t\AA)^{-1}(\I+t\AA)(\I_h+th\AA)^{-1} \\
\nonumber&=& \nabla_h \tilde{\wa}^h [\taub^1,\taub^2,\taub^3]^T (\I+th\AA)^{-1}.
\end{eqnarray}
By using (\ref{eq:2000}) we conclude that
\begin{eqnarray}
\nabla_h \wa^h &\wtto& \left((\nabla_\xi \tilde{\wa}_0)\circ \ra,0 \right) [\taub^1,\taub^2,\taub^3]^T+( \nabla_y
\breve{\wa}_1\circ \ra_e,\tfrac{1}{\gamma}
\partial_t \breve{\wa}_1\circ \ra_e ) [\taub^1,\taub^2,\taub^3]^T \\
\nonumber &=&  \nabla \wa_0 \, T_S+\sum_{i,j=1}^3 ( \nabla_y \wa_1,\tfrac{1}{\gamma} \partial_t \wa_1)_{ij} \taub^i \otimes \taub^j,
\end{eqnarray}
where $(\wa_1)_i= (\breve{\wa}_1 \circ \ra_e) \cdot \taub_i $. The
last property follows from the fact that $ \tilde{\wa}_0$ does
not depend on $t$.
\end{proof}

The following lemma is fairly straightforward; we refer to \cite[Corollary 2.3.4]{Neukamm-10} for a proof.

\begin{lemma} \label{lm:10}
Let $(\bs E^h_{\text{app}})\subset L^2(\Omega; \R^{3\times 3})$ be such that
$$
\bs E^h_{\text{app}} \stackrel{2, \gamma}{\weak}  \bs E_{\text{app}} \textrm{ in } L^2(\Omega \times \calY; \R^{3\times 3}).
$$
Then
$$
h^{-2}\left(\sqrt{(\I+h^2 \bs E^h_{\text{app}})^T(\I+h^2 \bs E^h_{\text{app}})}-\I\right)
\stackrel{2, \gamma}{\weak} \sym \bs E_{\text{app}} \ \textrm{ in }  L^2(\Omega \times \calY;\R^{3\times 3})
$$
\end{lemma}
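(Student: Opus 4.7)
The plan is to reduce the assertion to the linearization of $F \mapsto \sqrt{F^T F}$ at the identity, while handling the fact that $\bs E^h_{\text{app}}$ is only bounded in $L^2(\Omega)$ (and not pointwise) by a truncation argument. Set
$$
F^h := h^{-2}\Bigl(\sqrt{(\I+h^2 \bs E^h_{\text{app}})^T(\I+h^2 \bs E^h_{\text{app}})}-\I\Bigr).
$$
First I would establish the pointwise bound $|F^h|\leq |\bs E^h_{\text{app}}|$, via the elementary identity $|\sqrt{F^T F} - \I| = \dist(F,O(3))$ (immediate from the SVD of $F$) together with $\dist(F,O(3))\leq|F-\I|$. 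This gives boundedness of $(F^h)$ in $L^2(\Omega; \R^{3\times 3})$, hence existence of subsequential two-scale limits and, in particular, makes the claimed two-scale convergence meaningful.

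Next, smoothness of $G \mapsto \sqrt{(\I+G)^T(\I+G)}$ near $G=0$ yields constants $\delta, C > 0$ with $|\sqrt{(\I+G)^T(\I+G)} - \I - \sym G|\leq C|G|^2$ whenever $|G|\leq\delta$. Substituting $G = h^2 \bs E^h_{\text{app}}$, on the set $\{h^2|\bs E^h_{\text{app}}|\leq \delta\}$ we have $F^h = \sym \bs E^h_{\text{app}} + r^h$ with $|r^h|\leq C h^2|\bs E^h_{\text{app}}|^2$. Since $\sym \bs E^h_{\text{app}} \wtto \sym \bs E_{\text{app}}$ by linearity of $\sym$, it suffices to show that for every admissible test field $\psi \in C_c^\infty(\Omega; C(\calY; \R^{3\times 3}))$,
$$
\int_\Omega \bigl(F^h - \sym \bs E^h_{\text{app}}\bigr)(x) : \psi\bigl(x, r(x)/\e\bigr)\, dx \longrightarrow 0.
$$
For each fixed threshold $k > 0$, I would split the domain into $\{|\bs E^h_{\text{app}}|\leq k\}$ and its complement. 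On the first set, once $h$ is small enough that $h^2 k \leq \delta$, the Taylor estimate bounds the integrand pointwise by $C h^2 k |\bs E^h_{\text{app}}| \|\psi\|_\infty$, contributing $O(h^2 k)$ as $h\to 0$. On the complement, the pointwise bound from the first paragraph gives $|F^h - \sym \bs E^h_{\text{app}}| \leq 2|\bs E^h_{\text{app}}|$, and Cauchy--Schwarz combined with Chebyshev's inequality $|\{|\bs E^h_{\text{app}}|>k\}| \leq \|\bs E^h_{\text{app}}\|_{L^2}^2/k^2$ bound this contribution by $C\|\psi\|_\infty/k$. Letting first $h\to 0$ and then $k\to\infty$ closes the argument.

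The main obstacle is that the remainder $F^h - \sym \bs E^h_{\text{app}}$ need not vanish in $L^2(\Omega)$ without an equi-integrability assumption on $(\bs E^h_{\text{app}})$; the two-region truncation above circumvents this by combining the quadratic Taylor bound (where $h^2|\bs E^h_{\text{app}}|$ is small) with the global linear bound from the first step (where it is large), the two-scale convergence then being recovered in the weaker distributional sense of testing against smooth $\psi$, which nonetheless determines the $L^2$ two-scale limit uniquely.
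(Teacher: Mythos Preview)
Your proof is correct. The paper itself does not give a proof of this lemma; it simply writes ``The following lemma is fairly straightforward; we refer to \cite[Corollary~2.3.4]{Neukamm-10} for a proof.'' Your truncation-plus-Taylor argument --- bounding $|F^h|\leq |\bs E^h_{\text{app}}|$ globally via $|\sqrt{F^TF}-\I|=\dist(F,O(3))\leq|F-\I|$, using the quadratic Taylor remainder on $\{|\bs E^h_{\text{app}}|\leq k\}$, controlling the complement via Chebyshev, and testing against smooth $\psi$ --- is exactly the standard route for such linearization statements and is almost certainly what the cited reference does as well. One small remark: you correctly observe that the remainder $F^h-\sym\bs E^h_{\text{app}}$ need not go to zero in $L^2$ without equi-integrability, and that weak two-scale convergence only requires the pairing with test functions to vanish; this is the only subtle point, and you handle it cleanly.
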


\begin{proposition} \label{tm:ggggg}
Assume that there is $B \in L^2(S \times \calY;\QQ)$ such that
$$
\tfrac{1}{h}q_{\V^h_s} \wtto B \mbox{ in }L^2(S; \QQ),
$$
and assume that there is ${B}_w \in L^2(S;\QQ)$ such that
$$
\tfrac{1}{h}q_{\V^h} \rightharpoonup {B}_{w}, \text{ weakly in }L^2(S; \QQ).
$$
Assume also that
$$
\nabla_{\ttan} \nabla_{\ttan}(V^h_s \cdot n) \wtto \nabla_{\ttan} \nabla_{\ttan} (V \cdot n)+\sum_{i,j=1,2} (\partial^2_{y_i y_j} \varphi) \taub^i \otimes \taub^j,
$$

and set
\begin{equation} \label{p36-1444}
\bs E^h =\frac{\sqrt{(\nabla_h \ya^h)^T \nabla_h \ya^h}-\I}{h^2}.
\end{equation}

Then there exist
$$
\bs U_\gamma \in  L^2(S; L_\gamma(I\times\mathcal Y))
$$
such that (after passing to a subsequence)
$$
\bs E^h \wtto \bs E\mbox{ in }L^2(S^1; \R^{3\times 3}),
$$
where $\bs E$ is given by
\begin{equation}
\label{p36-1}
\bs E = \left(B_w+\dot{B} + \tfrac{1}{2} (dV)^2  - t \bbv\right)(T_S, T_S)
- t\sum_{i,j=1}^2 (\partial^2_{y_i y_j} \varphi)\taub^i \otimes \taub^j +\bs U_{\gamma},
\end{equation}
and where $\dot{B}=B-\int_Y B(\cdot,y) \ud y$.

In particular, the following are true:
\begin{enumerate}[(i)]
\item If $\gamma \in ( 0,\infty ]$ then there exists $\bs U_\gamma \in L^2(S;L_\gamma(I\times\mathcal Y))$ such that
\begin{equation} \label{eq:iggg556}
\bs E=\B_w + \tfrac{1}{2} (dV)^2 - t \bbv + {\bs U}_{\gamma}.
\end{equation}
\item If $\e^2\ll h \ll \e$, there exists $\bs U \in L^2(S;L_0^0(I\times\mathcal Y))$ such that

 \begin{equation} \label{p36-16}
\bs E=\B_w + \tfrac{1}{2} (dV)^2 - t \bbv +\bs U.
\end{equation}
\item If $h\sim \e^2$, with $\lim_{h \to 0} \tfrac{\e^2}{h}=\tfrac{1}{\gamma_1} \in ( 0,\infty)$, there exists $\bs U \in L^2(S;L_{0,\gamma_1}^{1}(I\times\mathcal Y))$
 such that
\begin{equation} \label{p36-156}
\bs E = \B_w + \tfrac{1}{2} (dV)^2 - t \bbv +\bs U.
\end{equation}

\end{enumerate}
\end{proposition}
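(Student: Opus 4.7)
The plan is to reduce the two-scale limit of $\bs E^h$ to that of $M^h := h^{-2}\bigl((R^h)^T\nabla_h \ya^h - \I\bigr)$ via polar decomposition, identify the tangential-tangential block of $\sym M$ using Proposition~\ref{tm:1} applied to $\tfrac{1}{h}q_{\V^h_s}$, and identify the remaining entries by applying Lemma~\ref{lm:50} to a suitable $L^2$-bounded sequence derived from $\nabla_h \ya^h$; together these will assemble the relaxation field $\bs U_\gamma$.

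For the reduction, since $R^h \in \SO 3$ we have $(\nabla_h \ya^h)^T \nabla_h \ya^h = (\I + h^2 M^h)^T(\I + h^2 M^h)$, and $M^h$ is $L^2$-bounded by Lemma~\ref{lm:100}(vii). Lemma~\ref{lm:10} then yields $\bs E^h \wtto \sym M$ two-scale along a subsequence. Using the Taylor expansion $R^h = \I + h\O_{\V^h_s} + F^h_s$ from Lemma~\ref{tay} (with $\|F^h_s\|_{L^2(S)} \lesssim h^2$), together with the fact that the $h^{-1}\partial_3$-factor in $\nabla_h$ does not contribute on $T_S$, I would compute that the $(T_S, T_S)$-block of $\sym M^h$ agrees, up to $o(1)$ two-scale, with
\[
\tfrac{1}{h}\, q_{\V^h_s}(T_S, T_S) + \tfrac{1}{2}(d\V^h_s)^2(T_S, T_S) - t\, b_{\V^h_s}(T_S, T_S).
\]
The key algebraic identities here are Lemma~\ref{irre} (giving $\sym \O_{\V^h_s}(T_S,T_S) = q_{\V^h_s}(T_S,T_S)$) and the formula \eqref{form-2} governing $\O_{\V}^2$ at an infinitesimal bending. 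Proposition~\ref{tm:1} now identifies the two-scale limit of $\tfrac{1}{h}q_{\V^h_s}$ as $\B$ with $\int_\calY \B\,\ud y = \B_w$, i.e.\ $\B_w + \dot{\B}$; the convergence $\V^h_s \weak \V$ in $H^2$ gives $\tfrac{1}{2}(d\V^h_s)^2 \to \tfrac{1}{2}(d\V)^2$ strongly in $L^1$; and the hypothesis on $\nabla_{\ttan}\nabla_{\ttan}(\V^h_s \cdot n)$ together with \eqref{defbV} yields $b_{\V^h_s} \wtto \bbv + \sum_{i,j\leq 2}(\partial^2_{y_iy_j}\varphi)\,\taub^i \otimes \taub^j$. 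This accounts for the first and third summands on the right-hand side of \eqref{p36-1} on the $(T_S, T_S)$-block.

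For the off-diagonal and normal-normal blocks of $\sym M$, I would apply Lemma~\ref{lm:50} to a suitable $L^2$-bounded sequence extracted from $h^{-1}(\nabla_h \ya^h - R^h)$, and then combine the resulting $\gamma$-dependent corrector with the tangential corrector produced by Proposition~\ref{tm:1}. When $\gamma \in (0, \infty]$, Lemma~\ref{lm:50} directly supplies a field of the form $\sum(\nabla_y \wa_1, \tfrac{1}{\gamma}\partial_3 \wa_1)_{ij}\,\taub^i\otimes\taub^j$ (respectively with a free $\wa_2$ when $\gamma = \infty$), and this combines with $\sym\nabla_y v$ from Proposition~\ref{tm:1}(i) to place $\bs U_\gamma$ in $L^2(S;L_\gamma(I\times\calY))$, establishing case (i). In the intermediate regime $\e^2 \ll h \ll \e$, Lemma~\ref{lm:50} with $\gamma = 0$ produces free $\wa_2$ as the last column of the corrector, while Proposition~\ref{tm:1}(i) supplies $\sym\nabla_y v$ in the tangential block, assembling an element of $L_0^0(I\times\calY)$. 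In the critical case $h \sim \e^2$, Proposition~\ref{tm:1}(ii) additionally contributes the $\tfrac{1}{\gamma_1}\wa_{1,3}\AA$ entry to the tangential block, which matches precisely the $\tfrac{1}{\gamma_1}\varphi\AA$ term in the definition of $L_{0,\gamma_1}^{1}$.

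The main obstacle lies in the singular regimes $h \lesssim \e^2$, where the in-plane and out-of-plane correctors can no longer be chosen independently. In case (iii) the tangential correctors even satisfy an algebraic compatibility with $\AA$ (forced by Proposition~\ref{tm:1}(iii)), and these constraints must be reconciled with the correctors produced by Lemma~\ref{lm:50} in such a way that the assembled field lies in the correct constrained class $L_0^0$ or $L_{0,\gamma_1}^{1}$. The curvature-induced coupling via $\AA$ between the tangential $\sym\nabla_y v$ and the normal corrector $\wa_{1,3}$ is the essential feature absent from the flat plate case of \cite{NeuVel-12}, and keeping track of it — together with checking that the $t$-linear term $-t\sum_{i,j\leq 2}(\partial^2_{y_iy_j}\varphi)\,\taub^i\otimes\taub^j$ fits consistently into the relaxation structure — is the delicate technical point of the proof.
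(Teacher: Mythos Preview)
Your overall architecture is right --- reduce to the approximate strain via Lemma~\ref{lm:10}, split off the $\tfrac{1}{2}\O_V^2$ contribution coming from $\sym(R^h-\I)/h^2$, identify the tangential block using Lemma~\ref{irre} and the hypothesis on $\nabla_{\ttan}\nabla_{\ttan}(V^h_s\cdot n)$, and capture the rest with Lemma~\ref{lm:50} plus Proposition~\ref{tm:1}. But there is a genuine gap at the heart of the argument: you never construct the comparison ansatz
\[
z^h(x) \;=\; x + h\bigl(V^h_s(x) + t(x)\,\mu_{V^h_s}(x)\bigr)\quad\text{on }S^h,
\qquad Z^h:=z^h\circ(\Phi^h)^{-1}\text{ on }S^1,
\]
and without it two of your steps do not go through.

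First, your assertion that the $(T_S,T_S)$-block of $\sym M^h$ equals $\tfrac{1}{h}q_{V^h_s}+\tfrac{1}{2}(dV^h_s)^2-t\,b_{V^h_s}$ up to $o(1)$ is precisely what requires proof. The $t$-linear term $-t\,b_{V^h_s}$ does not drop out of the Taylor expansion of $R^h$ alone; it comes from the explicit computation of $\nabla z^h$ via Lemma~\ref{ansatz}, which produces $\I+h\O_{V^h_s}-ht\,b_{V^h_s}(T_S,T_S)+\dots$. Only after matching $\nabla u^h$ to $\nabla z^h$ (using $\|\nabla u^h-\nabla z^h\|_{L^2(S^h)}\lesssim h^{5/2}$ from Lemma~\ref{tay}) does the $-t\,b_{V^h_s}$ term become visible in the strain.

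Second, Lemma~\ref{lm:50} applies to vector fields $w^h$ with $\|w^h\|_{L^2}+\|\nabla_h w^h\|_{L^2}$ bounded, not to matrix fields. You cannot ``extract'' such a sequence from $h^{-1}(\nabla_h y^h-R^h)$ (which, incidentally, tends to zero; the bounded object is $h^{-2}(\nabla_h y^h-R^h)$). The paper's route is to show $\|y^h-Z^h\|_{L^2(S^1)}+\|\nabla_h(y^h-Z^h)\|_{L^2(S^1)}\lesssim h^2$ via Poincar\'e and the gradient bound above, and then apply Lemma~\ref{lm:50} to $h^{-2}(y^h-Z^h)$. This yields the corrector $H_\gamma$ plus a term $\nabla_{\ttan}c\,T_S$ with $c$ the weak $H^1$-limit of $(V^h-V^h_s)/h$; the latter is then absorbed into $U_\gamma$ together with the $\dot B$ information from Proposition~\ref{tm:1}. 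Once you insert the ansatz $Z^h$ and use Lemma~\ref{ansatz}, the rest of your outline (including the case analysis for $\gamma$) matches the paper.
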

\begin{proof}
By Lemma \ref{lm:iggg111}, there exists a subsequence such that $E^h \wtto E$ for some $E$.
Denote by $\bs E^h_{\text{app}}$ the approximate strain
\begin{equation}
\bs E^h_{\text{app}} :=\frac{(\Ra^h)^T \nabla_h \ya^h -\I}{h^2}
\end{equation}
By Lemma \ref{lm:10} it is enough to identify the two-scale limit of
$\sym \bs E^h_{\text{app}}$.
Let us write
\begin{equation} \label{eq:249}
\Ra^h \bs E^h_{\text{app}}=\frac{\nabla_h \ya^h-\I}{h^2}-\frac{\Ra^h-\I}{h^2}.
\end{equation}
We have $\sym (\Ra^h \bs E^h_{\text{app}}) \wtto
\bs E$, because $\Ra^h \to \I$ boundedly in measure. By property (vi) of Lemma \ref{lm:100} the symmetric
part of the second term converges strongly in $L^2$
(and thus two-scale) to $\O_V^2/2$. So we need to identify the
two scale limit of
$$
\sym \left(\frac{\nabla_h \ya^h-\I}{h^2}\right).
$$
\\
For brevity we set $\mu_s^h = \mu_{V_s^h}$. As usual, we extend $V_s^h$, $n$ and $\mu_s^h$ trivially to $S^h$.

In what follows we abuse notation using $t$ also as an independent variable.
We define the maps $z^h : S^h\to\R^3$ by setting
$$
z^h(x) = x + h \left( V_s^h(x) + t(x)\mu_s^h(x) \right)\mbox{ for all }x\in S^h.
$$

Define $Q(x)$ as in \eqref{defQ} and define (compare \eqref{linwein})
$$
b^h(x) = b_{V^h_s}(x) \equiv - \nabla_{tan}\mu_s^h(x)T_S(x) + \nabla_{tan}V_s^h(x)\AA(x)
$$
and (compare \eqref{defO})
$$
\O^h(x) = \O_{V^h_s}(x) \equiv \nabla_{tan}V^h_s(x) T_S(x) + \mu^h_s(x)\otimes n(x).
$$
Then Lemma \ref{ansatz} shows that
\begin{equation}
\label{Dz-1}
\nabla z^h = I + h\O^h - ht\, b^h
- ht^2 \nabla_{tan}\mu_s^h \AA + h\left( \nabla_{tan}V_s^h + t\nabla_{tan}\mu_s^h \right)T_S Q.
\end{equation}
Note that $|Q| \leq Ct^2 \leq Ch^2$ on $S^h$, so
$$
\|Q\|_{L^2(S^h)} \leq Ch^{5/2}.
$$
In what follows $\Theta^h\in L^2(S^h)$ denote maps which may change from expression to expression, but which always satisfy
$$
\|\Theta^h\|_{L^2(S^h)} \leq Ch^{5/2}.
$$
We see from \eqref{Dz-1} that
$$
\nabla z^h = I + h\O^h + \Theta^h = R^h + \Theta^h,
$$
by \eqref{tay-2}. On the other hand, Lemma \ref{lm:100} shows that
$
\nabla u^h = R^h + \Theta^h.
$
Hence
\begin{equation}
\label{Dz-2}
\|\nabla u^h - \nabla z^h\|_{L^2(S^h)} \leq Ch^{5/2}.
\end{equation}
However, by the definition of $V^h$ and of $z^h$ we have, for $x\in S$,
$$
\frac{1}{h}\int_{I^h} z^h(x + tn(x)) - u^h(x + tn(x))\ dt = h\left( V^h_s(x) - V^h(x) \right).
$$
Hence Poincar\'e's inequality implies that
\begin{align*}
\|u^h - z^h\|_{L^2(S^h)} &\leq \left\|u^h - z^h - h\left( V^h_s - V^h \right)\right\|_{L^2(S^h)} + h\|V^h_s - V^h\|_{L^2(S^h)}
\\
&\leq \|\nabla u^h - \nabla z^h\|_{L^2(S^h)} + h^{3/2}\|V^h_s - V^h\|_{L^2(S)}
\leq Ch^{5/2},
\end{align*}
by \eqref{Dz-2} and \eqref{kor-b}.
Defining $Z^h : S^1\to\R^3$ by setting
$
Z^h(\Phi^h) = z^h
$
on $S^h$,
we have the equivalent bounds
$$
\|y^h - Z^h\|_{L^2(S^1)} + \|\nabla_h (y^h - Z^h )\|_{L^2(S^1)}\leq Ch^2.
$$

Thus, using Lemma \ref{lm:50}, we conclude that there
exists
$$
\bs H_\gamma \in L^2(S^1 \times \calY;R^{3\times 3})
$$
of the form given in Lemma \ref{lm:50} and $c \in H^1(S;\R^3)$ such that (after passing to a subsequence)
\begin{equation} \label{eq:112}
\frac{1}{h^2}\nabla_h \left(\ya^h- Z^h\right) \wtto \nabla_{tan}c\, T_S+ \bs H_\gamma.
\end{equation}
Here $c$ is a weak limit in $H^1(S;\R^3)$ of $\tfrac{\V^h-\V^h_s}{h}$.

We will now identify the two scale limit on $S^1$ of the quantity $\sym\left(\frac{\nabla_h Z^h-\I}{h^2}\right)$.
By \eqref{Dz-1} we have for all $x\in S^h$:
\begin{equation}
\label{Dz-5}
\frac{\nabla  z^h(x) - I}{h^2} = \frac{1}{h}\, \O^h(x) - \frac{t(x)}{h}\, b^h(x) + M^h(x),
\end{equation}
where
$$
\|M^h\|_{L^2(S^h)} \leq Ch^{3/2}.
$$
We must therefore identify the two-scale limits on $S$ of the first two terms.
\\
Lemma \ref{irre} implies
\begin{equation}
\label{symPh}
\frac{1}{h}\sym \O^h = \frac{1}{h}q_{V^h_s}(T_S, T_S) \wtto B(T_S, T_S)
\end{equation}
weakly two-scale in $L^2(S; \R^{3\times 3})$, by definition of $B$.
\\
It remains to identify the two-scale limit of $b^h$ on $S$.
Its weak $L^2$-limit clearly is $b_V$. This follows by comparing the definition of $b^h$
with \eqref{linwein}.
Next note that, since $V_s^h\to V$ strongly in $H^1(S)$, we know that
$
\nabla_{tan} V_s^h\AA
$
does not contribute to the oscillating part.
On the other hand, 
we have from \eqref{idp-1}:
\begin{align*}
\nabla_{tan}\mu^h
&= - \nabla_{tan}\nabla_{tan}\left( n\cdot V_s^h \right) + \nabla_{tan}\left( V_s^h\cdot\AA \right).
\end{align*}
The last term converges strongly in $L^2(S)$ to $\nabla_{tan}\left( V\cdot\AA \right)$, so it does not contribute to the oscillating part.
The contribution of the term $\nabla_{tan}\nabla_{tan}\left( n\cdot V_s^h \right)$ is given by the assumption.

We conclude that
\begin{equation}
b^h \wtto b_V + \dd^2_{y_iy_j}\varphi\ \tau^i\otimes\tau^j
\end{equation}
on $S$.
By \eqref{Dz-5}, the above convergence results on $S$ imply that
\begin{equation} \label{eq:iggg444}
\sym\left(\frac{\nabla_h Z^h-\I}{h^2}\right) \wtto
\left(B - tb_V\right)(T_S, T_S) -t (\partial^2_{y_i y_j} \varphi) \taub^i \otimes \taub^j
\end{equation}
weakly two-scale on $S^1$.

We conclude from (\ref{eq:112}) and (\ref{eq:iggg444}) that
$$
\bs E =\B_{} + \sym\big(\nabla_{\ttan} c \, T_S \big)
-\tfrac{1}{2} (\O_V^2)-t\sum_{i,j=1}^2 (\partial^2_{y_i y_j} \varphi)\taub^i \otimes \taub^j - t\bbv +\tilde{\bs U}_{\gamma}
$$
for some
$ \tilde{\bs U}_\gamma \in  L^2(S; L_\gamma(I\times\mathcal Y)).$
 Notice that
\begin{equation}
 B + \sym\big(\nabla_{\ttan}  c \, T_S\big)
 = B_w + \dot{B} + \bar{U}_\gamma, \end{equation}
 where
 $$\bar{U}_\gamma:= \tfrac{1}{2}\sum_{i=1}^2 (\nabla c \taub_i, \na)(\na \otimes \taub^i+\taub^i \otimes \na).$$
Notice that $ \bar{U}_\gamma \in L^2(S;L_\gamma(I\times \calY))$.

Define
$$
U_\gamma = \tilde{\bs U}_{\gamma} +\bar{\bs U}_{\gamma}  \in L^2(S;L_\gamma(I\times \calY)).
$$
Hence (\ref{p36-1}) is proven, using the identity (\ref{form-2}).
The remaining claims
now follow from Proposition \ref{tm:1}.
Namely, ii) and iii) are direct and i) is the consequence of the identity
\begin{equation*}
\left(\begin{array}{cc} t (\partial_{y_i y_j} \varphi)_{i,j=1}^2 & 0 \\ 0& 0 \end{array} \right)=\sym (\nabla_y, \tfrac{1}{\gamma} \partial_3) \left(\begin{array}{c} t \partial_{y_1} \varphi \\ t \partial_{y_2} \varphi  \\ \tfrac{1}{\gamma} \varphi \end{array} \right) \in L_{\gamma}(I\times \calY),
\end{equation*}
for $\gamma \in (0,\infty)$.

\end{proof}

For lower bound we need the following lemma.
\begin{lemma} \label{lm:iggg11111}
Let $(\ya^h) \subset H^1(S^{1};\R^3)$, define $E^h : S^1\to\R^{3\times 3}$ by
$$
\sqrt{(\nabla_h \ya^h)^T (\nabla_h \ya^h)} = I+h^2 \bs E^h,
$$
and assume that $\bs E^h \wtto \bs: E$. Then we have
\begin{eqnarray*}
& &\liminf_{h \to 0}  \int_{S}\int_I \mathcal{Q} (x+tn(x), r(x+tn(x))/\e, \bs E^h(x+tn(x)))\ud t \ud \mathcal{H}^2 \geq\\
& &\int_{S} \int_I \int_{\calY} \mathcal{Q} (x+tn(x),y, \bs E(\hx+tn(x),y)) \ud y \ud t \ud \mathcal{H}^2
\end{eqnarray*}
and
\begin{eqnarray*}
& &\liminf_{h \to 0}  \frac{1}{h^4}\int_{S}\int_I  W(x+tn(x), r(x+tn(x))/\e, \bs I+h^2\bs E^h(x+tn(x)))\, \ud t \ud  \mathcal{H}^2 \geq \\
& &\int_{S} \int_I \int_{\calY} \mathcal{Q} (x+tn(x),y, \bs E(x+tn(x),y)) \ud y \ud t \,d\mathcal{H}^2.
\end{eqnarray*}
\end{lemma}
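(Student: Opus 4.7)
The plan is to prove the two inequalities in order; the second will follow from the first through a truncation combined with the quadratic expansion \eqref{eq:94}.

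For the first inequality I would invoke the two-scale lower semicontinuity of non-negative convex integrands, which is standard under our hypotheses (cf.\ \cite{Visintin-06, Visintin-07}). Concretely, let $L(x,y)(\cdot,\cdot)$ denote the symmetric bilinear form polar to the quadratic density $\mathcal{Q}(x,y,\cdot)$; by Lemma \ref{lem:1} this $L$ is continuous in $x$, measurable periodic in $y$, and $\mathcal{Q}(x,y,\cdot)\ge 0$. For any smooth test field $F\in C_c^\infty(S^{1};C(\calY;\R^{3\times 3}_{\sym}))$, the inequality $\mathcal{Q}(x,y,E^h-F)\ge 0$ rearranges to the Young-type bound
\[
\mathcal{Q}(x,y,E^h)\ \ge\ 2\,L(x,y)(E^h,F(x,y))\ -\ \mathcal{Q}(x,y,F(x,y)).
\]
Substituting $y=r(x)/\e$ and integrating over $S^{1}$ (equivalently over $S\times I$ up to the uniformly bounded Jacobian $\det(I+t\AA)$, which is harmless for the inequality), the first term on the right-hand side passes to the two-scale limit via $E^h\wtto E$, and the second via the standard two-scale behavior of $F(\cdot,r(\cdot)/\e)$. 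Optimizing over $F$ (equivalently, approximating $E\in L^2(S^{1}\times\calY;\R^{3\times 3}_{\sym})$ by smooth periodic fields) yields the first inequality, since pointwise $\mathcal{Q}(E)=\sup_F\bigl[2L(E,F)-\mathcal{Q}(F)\bigr]$.

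For the second inequality I would introduce the truncation $\chi^h:=\mathbf 1_{\{|E^h|\le 1/h\}}$. On its support $|h^2E^h|\le h\to 0$ uniformly, so \eqref{eq:94} and $W\ge 0$ give
\[
\tfrac{1}{h^4}W(x,y,I+h^2E^h)\ \ge\ \chi^h\bigl(\mathcal{Q}(x,y,E^h)\ -\ m(h)\,|E^h|^2\bigr).
\]
The $m(h)$ term integrates to $o(1)$ because $m(h)\to 0$ and $E^h$ is bounded in $L^2$. It then remains to verify $\chi^h E^h\wtto E$ and apply the first inequality to the truncated sequence. But $|\{|E^h|>1/h\}|\le h^2\|E^h\|_{L^2}^2\to 0$, so Cauchy--Schwarz gives $\|(1-\chi^h)E^h\|_{L^1}\le h\|E^h\|_{L^2}^2\to 0$, and this kills any contribution against bounded two-scale test functions.

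The main obstacle lies in Part 1: the object $L(x,y)(\cdot,F(x,y))$, which one wants to treat as a two-scale test for $E^h$, has coefficients only measurable in $y$, so it is not directly admissible as a continuous test function in the sense of Definition \ref{def:two-scale}. This is overcome by a standard regularization scheme -- approximate $\mathcal{Q}$ by Caratheodory densities continuous in $y$, pass to the two-scale limit for each approximant, then take a diagonal sequence using dominated convergence and the uniform quadratic bound $\mathcal{Q}\le\beta|\sym\cdot|^2$. Once Part 1 is in place, Part 2 is essentially book-keeping.
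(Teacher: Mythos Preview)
Your proposal is correct and follows essentially the same approach as the paper. The paper's proof is terse --- it cites \cite{Visintin-06,Visintin-07} for the first inequality and invokes the ``standard truncation argument'' for the second --- and what you have written is precisely a fleshing-out of those two sentences: the convex-duality/test-field argument for two-scale lower semicontinuity is the content of the Visintin references, and your truncation $\chi^h=\mathbf 1_{\{|E^h|\le 1/h\}}$ together with the Chebyshev/$L^1$ estimate is the standard truncation the paper alludes to.
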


\begin{proof}
For the first claim we refer to \cite{Visintin-06,Visintin-07}. The second claim
then follows from the standard truncation argument.
\end{proof}

The lower bound parts of Theorem \ref{tm:glavni} and Theorem \ref{tm:glavni2}
is now a direct consequence of Proposition~\ref{tm:ggggg} and of Lemma~\ref{lm:iggg11111}.

\section{Upper bound} \label{peti}
We start with the following observation.
\begin{remark} \label{rem:zzzadnje} %
It is easy to see, by using Korn's inequality,  that $L_\gamma(I \times\mathcal Y,\R^{3 \times 3}_{\sym})$ as well as $L_0^0(I \times\mathcal Y,\R^{3 \times 3}_{\sym})$, $L_{0,\gamma_1}^{1}(I \times\mathcal Y,\R^{3 \times 3}_{\sym})$ for $\gamma \in [0,\infty]$ and $\gamma_1 \in (0,\infty)$ are closed subspaces of $L^2 (I \times\mathcal Y,\R^{3 \times 3}_{\sym})$.
Also by using Korn's inequality it is easy to see (see also \cite{Neukamm-11,Neukamm-10,NeuVel-12}) that the following coercivity bounds are satisfied:
\begin{eqnarray*}
\|\zetaa\|_{H^1}^2+\|\bs g\|_{L^2}^2 &\lesssim&  \|\mathcal{U}_0(\zetaa,\bs g)\|^2_{L^2},\\ && \hspace{-10ex} \forall \zetaa \in \dot{H}^1(\calY;\R^2), \bs g \in L^2(I\times \calY;\R^3), \\ && \\
\|\zetaa\|_{H^1}^2+\|\varphi\|_{H^2}+\|\bs g\|_{L^2}^2 &\lesssim&  \|\mathcal{U}_0^0(\zetaa,\varphi, \bs g)\|^2_{L^2},\\ && \hspace{-15ex} \forall \zetaa \in \dot{H}^1(\calY;\R^2),\varphi \in \dot{H}^2(\calY), \bs g \in L^2(I\times \calY;\R^3) \text{ and }  \\ && \\
\|\zetaa\|_{H^1}^2+\|\varphi\|_{H^2}+\|\bs g\|_{L^2}^2 &\lesssim&  \|\mathcal{U}_{0,\gamma_1}^{1} (\zetaa,\varphi,\bs g)\|^2_{L^2},\\ && \hspace{-15ex} \forall \zetaa \in \dot{H}^1(\calY;\R^2),\varphi \in \dot{H}^2(\calY), \bs g \in L^2(I\times \calY;\R^3) \text{ and } \gamma_1 \in (0,\infty) \\ && \\
\|\zetaa\|_{H^1}^2+\|\psi\|_{H^1}+\|\bs c\|_{L^2}^2 &\lesssim&  \|\mathcal{U}_\infty (\zetaa,\psi,\bs c)\|^2_{L^2},\\ && \hspace{-10ex} \forall \zetaa \in \dot{H}^1(\calY;\R^2),\psi \in L^2(I;\dot{H}^1(\calY)), \bs c \in L^2(I;\R^3),\\ && \\
\|\phia\|_{H^1}^2 &\leq& C(\gamma) \|\mathcal{U}_\gamma (\phia)\|_{L^2}^2, \ \forall \phia \in \dot{H}^1(I \times \calY;\R^3).
\end{eqnarray*}
Here the constant absorbed into the symbol $\lesssim$ depends on $\eta_1,\eta_2$.
\end{remark}

The following two lemmas and remark are analogous to \cite[Lemma~2.10, 2.11]{NeuVel-12}.

\begin{lemma}
  \label{lem:2a}
  For $\gamma \in (0,\infty]$ there exists a bounded linear operator
  \begin{equation*}
    { \Pi_\gamma}: L^2(S,\QQ)\times L^2(S,\QQ)\to L^2(S,L_\gamma(I\times\mathcal Y))
  \end{equation*}
  such that for almost every $x \in S$ we have
$$
 \mathcal{Q}_{\gamma}(x, q^1, q^2) =
\int_I\int_{\calY} \mathcal{Q}\Big(x + tn(x), y, q^1 + tq^2 + {\bs\Pi_\gamma}[q^1, q^2](x, t, y) \Big) \ud y \ud t.
$$
Moreover, if $q^1, q^2 \in C(S,\QQ)$ then
$$
(x,t,y)\mapsto{\bs \Pi_\gamma}[q^1, q^2](x,t,y)
$$
is continuous as well.
\end{lemma}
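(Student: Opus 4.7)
The plan is to realize $\Pi_\gamma[q^1,q^2](x,\cdot,\cdot)$ pointwise in $x\in S$ as the unique minimizer of the cell problem defining $\mathcal{Q}_\gamma(x,q^1,q^2)$, via Lax--Milgram on the closed subspace $L_\gamma(I\times\calY)\subset L^2(I\times\calY;\R^{3\times 3}_{\sym})$, and then to promote the resulting pointwise construction to a global bounded linear operator with the stated continuity.

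Fix $x\in S$ and $q^1,q^2\in\QQ(x)_{\sym}$, and let $a_x$ denote the bounded symmetric bilinear form on $L^2(I\times\calY;\R^{3\times 3}_{\sym})$ obtained by polarizing the quadratic form $U\mapsto\int_I\int_\calY\mathcal{Q}(x+tn(x),y,U)\,\ud y\,\ud t$. By Remark \ref{rem:zzzadnje}, $L_\gamma(I\times\calY)$ is closed in $L^2(I\times\calY;\R^{3\times 3}_{\sym})$, and by (Q3) of Lemma \ref{lem:1} the form $a_x$ is bounded and coercive on this subspace, with constants depending only on $\alpha,\beta,\eta_1,\eta_2$. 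Lax--Milgram yields a unique $U^\ast\in L_\gamma(I\times\calY)$ satisfying
$$a_x(U^\ast,\Phi)=-a_x(q^1+tq^2,\Phi)\qquad\forall\,\Phi\in L_\gamma(I\times\calY),$$
and by the variational characterization $U^\ast$ is precisely the unique minimizer realizing $\mathcal{Q}_\gamma(x,q^1,q^2)$ in \eqref{defQg}; we set $\Pi_\gamma(x)[q^1,q^2]:=U^\ast$. Linearity of $\Pi_\gamma(x)$ in $(q^1,q^2)$ is inherited from linearity of the Euler--Lagrange equation, and testing with $\Phi=U^\ast$ together with coercivity gives the pointwise estimate $\|\Pi_\gamma(x)[q^1,q^2]\|_{L^2(I\times\calY)}\lesssim|q^1|+|q^2|$, uniformly in $x\in S$.

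To assemble a global operator on $L^2(S,\QQ)^2$, the next step is to establish measurability of $x\mapsto\Pi_\gamma(x)[q^1(x),q^2(x)]$ as an $L^2(I\times\calY)$-valued function. For simple $\QQ$-valued data this is immediate, and the pointwise uniform bound allows approximation of general $L^2$ data; equivalently, one may apply Aumann's measurable selection theorem to the single-valued solution set of the $x$-parameterized Lax--Milgram problem, whose coefficients depend measurably on $x$ through $\mathcal{Q}(x+tn(x),y,\cdot)$ and continuously through $\tau^i(x)$. Integrating the pointwise bound over $S$ then yields boundedness of $\Pi_\gamma$ as a map from $L^2(S,\QQ)^2$ into $L^2(S;L_\gamma(I\times\calY))$.

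For the continuity assertion when $q^1,q^2\in C(S,\QQ)$, the uniformity in Assumption \ref{ass:main} combined with continuity of $n,\tau^i$ and of the data implies that both the form $a_x$ and the right-hand side $-a_x(q^1(x)+tq^2(x),\cdot)$ depend continuously on $x$ in the relevant operator and dual norms; a standard perturbation argument for Lax--Milgram then shows that $x\mapsto\Pi_\gamma(x)[q^1(x),q^2(x)]$ is continuous as an $L^2(I\times\calY)$-valued function. I expect the main obstacle to be the passage from this $L^2$-continuity in $x$ to genuine joint pointwise continuity in $(x,t,y)$: because $\mathcal{Q}$ is only measurable in $y$, one must exploit the Sobolev structure of $L_\gamma$ (whose elements are symmetric gradients of $H^1$-functions on $I\times\calY$) together with interior regularity for the Euler--Lagrange cell system in $(t,y)$ with $x$-continuous coefficients, and combine this with Sobolev embedding in the periodic/normal directions to produce a representative that is jointly continuous in $(x,t,y)$. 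The case $\gamma=\infty$, where the relaxation fields in $\mathcal{U}_\infty$ are only $L^2$ in $t$, will require the most care.
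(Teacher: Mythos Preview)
Your Lax--Milgram argument for existence, uniqueness, linearity and uniform boundedness of the pointwise minimizer, together with the measurability/approximation step to assemble a global bounded operator, is correct and is precisely the mechanism behind the paper's treatment: the paper offers no proof of its own but simply refers to the analogous \cite[Lemma~2.10, 2.11]{NeuVel-12}, where this standard quadratic-minimization argument is carried out. For the first two assertions of the lemma your proposal is therefore both correct and aligned with the paper.

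The gap you anticipate in the joint continuity assertion is genuine, but your proposed remedy does not close it. Interior regularity for the Euler--Lagrange cell system would require the coefficients of the quadratic form to be at least continuous (or VMO) in the $(t,y)$-variables; under (Q2) the map $y\mapsto\mathcal{Q}(x+tn(x),y,\cdot)$ is merely measurable, so no elliptic regularity in $y$ is available and one cannot in general produce a $y$-continuous representative of the minimizer. Sobolev embedding does not help either, since $I\times\calY$ is three-dimensional and elements of $L_\gamma$ are only $L^2$ in $(t,y)$; for $\gamma=\infty$ the situation is worse still, as the component $c\in L^2(I;\R^3)$ carries no $t$-regularity at all, which you rightly flag. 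In fact the pointwise joint continuity stated in the lemma appears to be stronger than what the hypotheses support and stronger than what the paper actually uses: in the upper-bound construction (Proposition~\ref{prop:peter111}) only the approximability of $\Pi_\gamma[q^1,q^2]$ in $L^2(S;L_\gamma(I\times\calY))$ by smooth correctors is invoked, and for that purpose your argument that $x\mapsto\Pi_\gamma(x)[q^1(x),q^2(x)]$ is continuous from $S$ into $L^2(I\times\calY)$, combined with density of smooth fields in $L_\gamma$, is already sufficient.
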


\begin{lemma}
  \label{lem:2b}
  The function
  $\mathcal{Q}_\gamma:S\times\QQ \times\QQ\to\R^+$ is continuous.
  Moreover for any $x \in S$ the function $Q_\gamma(x):\mathbb{S}(x)_{\sym} \times \mathbb{S}(x)_{\sym} \to \R$  satisfies
  \begin{eqnarray*}
  (|q^1|^2 + |q^2|^2) &\lesssim& \mathcal{Q}_\gamma( x,q^1,q^2)= \mathcal{Q}_\gamma( x,q^1,q^2)\\
        &\lesssim& |q^1|^2 + |q^2|^2.
      \end{eqnarray*}
The constant in $\lesssim$ depends only on $\alpha,\beta,\eta_1,\eta_2$.
\end{lemma}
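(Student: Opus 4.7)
The plan is to prove the lemma in three steps: the upper bound via a trivial competitor, the lower bound via an orthogonality argument, and continuity via standard semicontinuity.

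For the upper bound I would insert $U = 0$ into the definitions (\ref{defQg}), (\ref{defQgi}), (\ref{defQg01}). Using property (Q3) of Lemma \ref{lem:1}, this yields
$$
\mathcal{Q}_\gamma(x, q^1, q^2) \le \int_I \int_\calY \mathcal{Q}(x + t n(x), y, q^1 + t q^2)\, dy\, dt \le \beta \int_I |q^1 + t q^2|^2\, dt = \beta\bigl(|q^1|^2 + \tfrac{1}{12}|q^2|^2\bigr),
$$
which is dominated by a constant (depending on $\beta, \eta_1, \eta_2$) times $|q^1|^2 + |q^2|^2$, since the $\tau^i\otimes \tau^j$-expansion and the Frobenius norm are uniformly equivalent on the tangential--tangential subspace by \eqref{eq:0}.

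For the lower bound, the key observation is an orthogonality between the data $(q^1, q^2)$ and the relaxation fields. The forms $q^1, q^2 \in \QQ(x)_{\sym}$ live entirely in the tangential--tangential subspace, i.e.\ their expansion in $\tau^i \otimes \tau^j$ is supported on indices $i, j \in \{1, 2\}$. For any competitor $U$, the average $\bar U(t) := \int_\calY U(t, y)\, dy$ has the opposite property: inspecting Definition \ref{def:relfield} case by case, the candidate tangential--tangential contributions $\sym \nabla_y \zeta$, $t \nabla_y^2 \varphi$, and $\tfrac{1}{\gamma_1}\varphi \AA(x)$ all integrate to zero in $y$ by periodicity or the zero-mean condition on $\zeta, \varphi$. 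Since $\tau^i \cdot n = \delta_{i3}$, the Frobenius cross term $(q^1 + t q^2) : \bar U(t)$ vanishes identically, so pointwise $|q^1 + t q^2 + \bar U(t)|^2 = |q^1 + t q^2|^2 + |\bar U(t)|^2$. Combining (Q3), Jensen's inequality in $y$, and $\int_I t\, dt = 0$ gives
$$
\int_I \int_\calY \mathcal{Q}(x + tn(x), y, q^1 + t q^2 + U)\, dy\, dt \ge \alpha \int_I |q^1 + t q^2|^2\, dt = \alpha\bigl(|q^1|^2 + \tfrac{1}{12} |q^2|^2\bigr),
$$
uniformly in $U$; taking the infimum yields the desired coercivity, again with constants depending only on $\alpha, \eta_1, \eta_2$.

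For the continuity on $S \times \QQ \times \QQ$, since $\mathcal{Q}_\gamma(x, \cdot, \cdot)$ is a nonnegative quadratic form bounded above by $|q^1|^2 + |q^2|^2$, joint continuity reduces to continuity of the coefficient matrix in $x$. I would prove upper semicontinuity by testing at $x_n \to x_0$ with a near-optimal competitor for $x_0$, invoking continuity of $\mathcal{Q}(\cdot, y, G)$ (property (Q1)) together with dominated convergence (the integrand is pointwise bounded by $\beta|q^1 + tq^2 + U|^2$, independent of $n$). For lower semicontinuity, I would use the direct method: by the coercivity estimates of Remark \ref{rem:zzzadnje}, any sequence of almost-optimal competitors $(U_n)$ is bounded in $L^2$; extract a weak $L^2$-limit, which lies in the relaxation space (closed by Remark \ref{rem:zzzadnje}), and pass to the liminf using the weak lower semicontinuity of the convex functional $U \mapsto \int \mathcal{Q}(x_0 + tn(x_0), y, q^1 + tq^2 + U)$.

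The main obstacle I anticipate is the case-by-case verification of the orthogonality of $\bar U(t)$ with the tangential--tangential data across the three distinct relaxation spaces $L_\gamma$, $L_0^0$, and $L_{0, \gamma_1}^1$ and across all values $\gamma \in [0, \infty]$. While each individual check is elementary, care is needed to handle the extra term $\tfrac{1}{\gamma_1}\varphi \AA(x)$ appearing in $L_{0,\gamma_1}^1$, which is itself tangential--tangential but averages to zero in $y$ thanks to $\varphi \in \dot H^2(\calY)$; once this is verified uniformly, both bounds and the continuity follow from standard arguments.
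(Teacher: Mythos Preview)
The paper does not actually prove this lemma; it merely states that Lemmas~\ref{lem:2a} and~\ref{lem:2b} are ``analogous to \cite[Lemma~2.10, 2.11]{NeuVel-12}'' and moves on. Your argument is therefore not being compared against anything explicit here, and what you have written is essentially the standard proof one would expect in the cited reference.

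Your treatment of the two-sided bound is correct. The upper bound via $U=0$ is immediate. For the lower bound, your key observation---that the $\calY$-average of the tangential--tangential block of every admissible $U$ vanishes---is exactly right, and your case check (including the $\tfrac{1}{\gamma_1}\varphi\AA$ term, which averages to zero because $\varphi\in\dot H^2(\calY)$) is complete. One small remark: you do not actually need Jensen; expanding $\int_\calY|q^1+tq^2+U|^2\,dy$ directly gives $|q^1+tq^2|^2+\int_\calY|U|^2\,dy$ once the cross term vanishes, which is slightly cleaner.

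For continuity there is one point you should make more explicit. Both the density $\mathcal Q(x+tn(x),y,\cdot)$ and the relaxation space $L_\gamma(I\times\calY)$ depend on $x$ (the latter through the frame $\tau^i(x)$; see the remark following Definition~\ref{def:relfield}). In your lower-semicontinuity step you extract a weak limit of competitors and invoke weak l.s.c.\ of the functional at $x_0$, but the competitors $U_n$ live in the space at $x_n$ and are integrated against $\mathcal Q$ at $x_n$. The fix is to work at the level of the \emph{parameters} $(\phi)$, $(\zeta,\varphi,g)$, etc., which by Remark~\ref{rem:zzzadnje} are bounded in the relevant Sobolev spaces: pass to a weak limit of the parameters, note that $\tau^i(x_n)\to\tau^i(x_0)$ strongly so that $U_n\rightharpoonup U_0\in L_\gamma^{(x_0)}$, and use that $\mathcal Q$ is quadratic with coefficients continuous in $x$ and uniformly bounded by $\beta$ to control the error $\int\bigl(\mathcal Q(x_n+tn,y,\cdot)-\mathcal Q(x_0+tn,y,\cdot)\bigr)$. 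This is routine but should be said.
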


Lemma \ref{lem:2a} and Lemma \ref{lem:2b} remain true
for the quadratic forms $\mathcal{Q}_0^0$, $\mathcal{Q}_0^{1,\gamma_1}$,
the spaces $L_0^0(I\times \calY)$, $L_{0,\gamma_1}^{1}(I\times \calY)$,
and the appropriate operators
$$
\Pi_0^0:L^2(S,\QQ)\times L^2(S,\QQ)\to
    L^2(S,L_0^0(I\times\mathcal Y))
$$
and
$$
\Pi_{0,\gamma_1}^{1}:L^2(S,\QQ)\times L^2(S,\QQ)\to
    L^2(S,L_{0,\gamma_1}^{1} (I\times\mathcal Y))
$$
for $\gamma_1 \in (0,\infty)$.

We need the following
auxiliary result concerning the linearization of the square root of a matrix.
Its proof is straighforward by Taylor expansion.

\begin{lemma}
  \label{lem:3}
There exists $\eta>0$ and a nondecreasing function $m: (0,\eta) \to \R^+$ such that $m (\delta) \to 0$ as $\delta\to 0$ such that the following is true:
\\
Let $G^h\in L^2(S^1, \R^{3\times 3})$ and $K^h\in L^4(S^1,\R^{3\times 3})$ satisfy the following conditions:
\begin{enumerate}[(i)]
\item There exists $M\in\R$ such that
$$
  \limsup_{h \to 0}  \left(\|\sym  \bs G^h \|_{L^2(\Omega)}+\|\bs K^h \|_{L^4(\Omega)} \right) \leq M.
$$
 \item We have
$
h^{-1}\sym \bs K^h \to 0 \mbox{ strongly in }L^2(\Omega)
$
as $h\to 0$.
\item There exists $\delta < \eta$ such that
$$\limsup_{h \to 0} h\|\bs K^h \|_{L^\infty} \leq  \delta
$$
and
$$
\limsup_{h \to 0} h^2\|\bs G^h \|_{L^\infty} \leq \delta.
$$
\item We have $hG^h \to 0$ strongly in $L^4(\Omega)$ as $h\to 0$.
\end{enumerate}
Set
  \begin{equation*}
    \bs E^h = \frac{1}{h^2}\left(\sqrt{(\id+h\bs K^h+h^2\bs G^h)^t(\id+h\bs K^h+h^2\bs G^h)} - \id\right)
  \end{equation*}
and
\begin{equation}
\bs E^h_{app} = \sym\bs G^h-\frac{1}{2}(\bs K^h)^2.
\end{equation}
Then we have
  \begin{equation} \label{opravdaj1}
    \limsup\limits_{h\to 0}\|\bs E^h-\bs E^h_{app} \|_{L^2}\leq M^2 m(\delta),
  \end{equation}
and
 \begin{align} \label{opravdaj2}
\limsup_{h \to 0} &\Big| \frac{1}{h^4}\int_{S}\int_I W(x+tn(x), r(x+tn(x))/\e, \bs I+h^2 \bs E^h(x+tn(x)))\ud t \ud \mathcal{H}^2\\ \nonumber
&-\int_{S}\int_I \mathcal{Q} (x+tn(x), r(x+tn(x))/\e, \bs E^h_{app}(x+tn(x)) \ud t \ud \mathcal{H}^2\Big| \leq (M+1)^4 m(\delta).
  \end{align}
If, moreover, $\bs E^h_{app} \stto \bs E(x,y)$ strongly two scale,
then
\begin{align} \label{opravdaj3}
\limsup_{h \to 0} &\Big| \frac{1}{h^4}\int_{S}\int_I
       W(x+tn(x), r(x+tn(x))/\e, I + h^2\bs E^h(x+tn(x)) \ud t \ud \mathcal{H}^2 \\ \nonumber
& -\int_{S}\int_I \int_{\calY} \mathcal{Q} (x+tn(x),y, \bs E(x+tn(x),y)) \ud y \ud t \ud \mathcal{H}^2 \,dx\Big| \leq (M+1)^4 m(\delta).
\end{align}
\end{lemma}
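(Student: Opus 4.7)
The plan is to Taylor expand the matrix square root around $I$ and to use the scaling hypotheses (i)--(iv) to control the resulting error. Since for symmetric $X$ close to $0$ one has $\sqrt{I+X}=I+\tfrac12 X-\tfrac18 X^2+R(X)$ with $|R(X)|\leq C|X|^3$, and since hypothesis (iii) yields $\|(F^h)^T F^h-I\|_{L^\infty}\leq C\delta$ (where $F^h:=I+hK^h+h^2G^h$), this expansion is valid pointwise with a uniform cubic remainder.

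For \eqref{opravdaj1} I expand
$$
X:=(F^h)^T F^h-I= 2hS + h^2\bigl(2\sym G^h + (K^h)^T K^h\bigr)+h^3\bigl((K^h)^T G^h+(G^h)^T K^h\bigr)+h^4(G^h)^T G^h,
$$
with $S:=\sym K^h$ and $W:=\skw K^h$. Substituting this into the Taylor expansion and dividing by $h^2$ gives
$$
E^h = \tfrac{S}{h}+ \sym G^h+ \tfrac12\bigl((K^h)^T K^h-S^2\bigr)+\mathcal{R}^h,
$$
where $\mathcal{R}^h$ collects the $h^3$- and $h^4$-contributions of $X/(2h^2)$, the non-leading part of $X^2/(8h^2)$, and $R(X)/h^2$. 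Combining this with the algebraic identity $(K^h)^T K^h-S^2+(K^h)^2 = S^2+2SW$ (valid for any $K^h=S+W$ with $S$ symmetric and $W$ skew), the sought difference reduces to
$$
E^h-E^h_{app} = \tfrac{S}{h}+\tfrac12 S^2+SW+\mathcal{R}^h.
$$
Each term is then estimated separately: $\|S/h\|_{L^2}\to 0$ by (ii); the interpolation $\|S\|_{L^4}^2\leq \|S\|_{L^\infty}\|S\|_{L^2}\leq (\delta/h)\cdot o(h)=o(\delta)$ forces $\|S^2\|_{L^2}\to 0$ and $\|SW\|_{L^2}\leq \|S\|_{L^4}\|K^h\|_{L^4}\to 0$; the cross-terms of $\mathcal{R}^h$ involving $G^h$ are absorbed by $\|hG^h\|_{L^4}\to 0$ from (iv); and after showing $\|X\|_{L^2}=O(h^2)(M+1)^2$, the remaining contributions to $\mathcal{R}^h$ are bounded by $C\|X\|_{L^\infty}\|X\|_{L^2}/h^2\leq C(M+1)^2\delta$ and $C\|X\|_{L^\infty}^2\|X\|_{L^2}/h^2\leq C(M+1)^2\delta^2$. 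Summing yields $\limsup\|E^h-E^h_{app}\|_{L^2}\leq(M+1)^2\tilde m(\delta)$.

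For \eqref{opravdaj2} I invoke the uniform quadratic expansion of $W$ from Lemma~\ref{lem:1}: $|W(x,y,I+G)-\mathcal{Q}(x,y,G)|\leq |G|^2 m_W(|G|)$ with $m_W(s)\to 0$ at $0$, applied to $G=h^2 E^h$. Since $\|h^2 E^h\|_{L^\infty}\leq C\delta$ (because $h^2 E^h=\sqrt{I+X}-I$ pointwise), integrating gives
$$
\Big|\tfrac{1}{h^4}\!\int W(\cdot,r/\e,I+h^2 E^h)-\int\mathcal{Q}(\cdot,r/\e,E^h)\Big|\leq m_W(C\delta)\,\|E^h\|_{L^2}^2\leq (M+1)^4 m_W(C\delta),
$$
where the bound $\|E^h\|_{L^2}\leq C(M+1)^2$ follows from \eqref{opravdaj1} together with the direct estimate $\|E^h_{app}\|_{L^2}\leq M+\tfrac12\|K^h\|_{L^4}^2$. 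To pass from $\mathcal{Q}(E^h)$ to $\mathcal{Q}(E^h_{app})$ I use $|\mathcal{Q}(A)-\mathcal{Q}(B)|\leq C(|A|+|B|)|A-B|$, Cauchy--Schwarz, and \eqref{opravdaj1}. Finally, \eqref{opravdaj3} follows by combining \eqref{opravdaj2} with the standard convergence $\int\mathcal{Q}(\cdot,r(\cdot)/\e,E^h_{app})\to\iint\mathcal{Q}(\cdot,y,E)$ under strong two-scale convergence of $E^h_{app}$; cf.\ \cite{Visintin-06,Visintin-07}.

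The main obstacle is neither the Taylor expansion itself nor the $W$-to-$\mathcal{Q}$ reduction, but the interplay between the $L^2$-smallness of $\sym K^h$ coming from (ii) and the $L^\infty$-control of order $\delta/h$ coming from (iii). The algebraic identity $(K^h)^T K^h-S^2+(K^h)^2=S^2+2SW$ is what allows the nominally $O(1)$ terms $\tfrac12 S^2$ and $SW$ in $E^h-E^h_{app}$ to be absorbed through $L^2$--$L^\infty$ interpolation on $S$; without this identification one would be left with contributions of order $\|K^h\|_{L^4}^2\leq M^2$, which do not vanish with $\delta$.
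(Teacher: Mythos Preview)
Your argument is correct and follows essentially the same route as the paper: both proofs Taylor expand the square root (the paper writes $\sqrt{(I+A)^T(I+A)}\approx I+\sym A+\tfrac12 A^TA$ with $A=hK^h+h^2G^h$, which is exactly your first-order expansion of $\sqrt{I+X}$), reduce to showing that an intermediate approximation differs from $E^h_{app}$ by terms going to $0$ in $L^2$, and then pass from $W$ to $\mathcal{Q}$ via Lemma~\ref{lem:1} and from $\mathcal{Q}(E^h_{app})$ to the two-scale limit via strong two-scale convergence. Your write-up in fact supplies the details the paper omits, in particular the identity $(K^h)^TK^h+(K^h)^2=2S^2+2SW$ and the $L^2$--$L^\infty$ interpolation $\|S\|_{L^4}^2\leq\|S\|_{L^\infty}\|S\|_{L^2}$ needed to see that $S^2$ and $SW$ vanish.
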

\begin{proof}
We will just give the sketch of the proof. By Taylor expansion there exists $\eta_1>0$ and nondecreasing function $m_1:(0,\eta_1) \to \R^+$ such that $m_1(\delta) \to 0$ as $\delta \to 0$ and for every
$A\in \R^{3 \times 3}$ which satisfies $|A-I|<\eta_1$ we have
 \begin{equation} \label{idd1}
 \left| \sqrt{(I+A^t)(I+A)}-\left(I+\sym A+\frac{1}{2} A^t A\right)\right|\leq m_1(|A-I|)\left( \sym A+\frac{1}{2} A^T A \right).
\end{equation}
If we plug into this identity $A=hK^h+h^2G^h$, divide by $h^2$ and integrate and let $h \to 0$ we obtain
$$ \limsup_{h \to 0} \|E^h-\tilde{E}^h_{app}
\|_{L^2} \leq m_1(\delta) \|\tilde{E}^h_{app}\|_{L^2}, $$
where
$$ \tilde{E}^h_{app}=\frac{\sym K^h}{h}+\sym G^h+\tfrac{1}{2}(K^h)^tK^h+h\sym \left((G^h)^tK^h\right)+\tfrac{1}{2} h^2(G^h)^tG^h.  $$
Using the assumptions it is easy to prove that $$ \lim_{h \to 0} \| \tilde{E}^h_{app}-E^h_{app}\|_{L^2} \to 0.$$
Now (\ref{opravdaj1}) immediatelly follows.
(\ref{opravdaj2}) follows from Lemma \ref{lem:1} and triangular inequality. (\ref{opravdaj2}) follows from
the fact that
\begin{align*}
& \int_{S}\int_I \mathcal{Q} (x+tn(x), r(x+tn(x))/\e, \bs E^h_{app}(x+tn(x)) \ud t \ud \mathcal{H}^2 \to \\
& \int_{S}\int_I \int_{\calY} \mathcal{Q} (x+tn(x),y, \bs E(x+tn(x),y)) \ud y \ud t \ud \mathcal{H}^2 \,dx,
\end{align*}
as $h \to 0$ which is the consequence of the continuity of the integral functionals with respect to strong two scale convergence, see \cite{Visintin-06,Visintin-07}.

\end{proof}

We give here a general computation that will be needed in the proof of the next proposition.
Let $P\in C^1(S^1; C^1(\calY; \R^3))$ define $P^h : S^1\to\R^3$ by $P^h = P(\cdot, r/\e)$, where,
as usual, $r$ is extended trivially from $S$ to $S^1$.
Then by \eqref{fladh}
\begin{align*}
\nabla_h P^h &= \frac{1}{h}\dd_n P^h\otimes n + \nabla P^h T_S (I + t\AA)(I + ht\AA)^{-1}T_S
\\
&= \frac{1}{h}\dd_n P(\cdot, r/\e)\otimes n
\\
&+
\left( \nabla P(\cdot, r/\e) + \frac{1}{\e}\nabla_y P(\cdot, r/\e)\nabla_{tan} r\ T_S( I + t\AA )^{-1} \right)(I + t\AA)(I + ht\AA)^{-1}T_S,
\end{align*}
because having extended $r$ trivially to $S^1$, we have $\dd_n r = 0$ and \eqref{dpi-2} applies.
We use the notation $\dd_n P$ to denote the $n$-derivative with respect
to the first argument only. Since $(I + ht\AA)^{-1}$ agrees with $I$ up to a term
that on $S^1$ is uniformly bounded by $h$, and since $h \leq C\e$, we conclude that
\begin{equation}
\label{nablaph}
\left\|\nabla_h P^h - \frac{1}{h}\dd_n P(\cdot, r/\e)\otimes n - \frac{1}{\e}\nabla_y P(\cdot, r/\e)\nabla_{tan} r\, T_S\right\|_{L^{\infty}(S^1)}\leq C.
\end{equation}
Note that the linear operator $\nabla_{tan} r T_S$ on the tangent space can be expressed as, see (\ref{eq:500}),
$$
\nabla_{tan} r T_S = e_1\otimes \tau^1 + e_2\otimes\tau^2,
$$
which is just the pullback operator $\psi^*$ from the tangent space to $\R^2$.

\begin{proposition} \label{prop:peter111}
For every $\B_w \in \mathcal{B}$ and for every infinitesimal bending $V \in H^2(S;\R^{3 \times 3})$ of $S$,
there exists a sequence 
$(\ya^h)\subset H^1(S^1,\R^3)$ 
satisfying the following:
\begin{enumerate}[(i)]
\item $\ya^h\to\pi$ strongly in $H^1(S^{1};\R^3)$.
\item The maps $V^h(x) = h^{-1}\int_I y^h(x + tn(x))\ dt\ - x$ satisfy
$$
\bs V^h\to \V\mbox{ strongly in }H^1(S;\R^3)
$$
and
$$
\tfrac{1}{h} q_{ \bs V^h} \weak \B_w\mbox{ weakly in }L^2(S;\QQ).
$$
\item We have
$$
\lim_{h \to 0} h^{-4} I^h(\ya^h) =
\begin{cases}
I_\gamma (\V,\B_w) &\mbox{ if }\lim h/\e = \gamma\in (0,\infty];
\\
I_0^0 (\V,\B_w) &\mbox{ if }\e\gg h \gg \e^2;
\\
I_{0,\gamma_1}^{1} (\V,\B_w) &\mbox{ if $h\sim \e^2$ with }\lim \e^2/h = 1/\gamma_1.
\end{cases}
$$
\end{enumerate}
\end{proposition}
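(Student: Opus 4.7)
The plan is to construct the recovery sequence as the generic shell ansatz of Lemma \ref{ansatz} combined with a homogenization corrector that realises the cell-formula minimiser, and to pass to the energy limit via Lemma \ref{lem:3}. First I would reduce by continuity of the limit functionals (Lemma \ref{lem:2b}) and by density to the case where $V\in C^\infty(\bar S;\R^3)$ is a smooth infinitesimal bending, $B_w=q_w$ for some smooth vector field $w$, and the cell minimiser $U=\Pi_{*}[B_w+\tfrac{1}{2}(dV)^2,-b_V]$ supplied by Lemma \ref{lem:2a} is smooth in all arguments. Density of smooth infinitesimal bendings in the $H^2$-bending space of $S$ is standard on compact $C^3$ surfaces, cf.\ \cite{Lewicka1,Hornung12}.

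On $S^h$ I would put the base deformation
$$
z^h(x)=x+h\bigl(V(\pi(x))+t(x)\mu_V(\pi(x))\bigr)+h^2\bigl(w(\pi(x))+t(x)\nu(\pi(x))-\tfrac{t(x)^2}{2}\sigma(\pi(x))\bigr),
$$
with auxiliary tangential field $\nu$ and normal field $\sigma$ chosen so that, after applying Lemma \ref{ansatz} and expanding in $h$, the tangential--tangential block at order $h^2$ of the Cauchy--Green tensor of $z^h$ reproduces the target midsurface strain $\bigl(B_w+\tfrac{1}{2}(dV)^2-tb_V\bigr)(T_S,T_S)$. The key algebraic cancellations use $\sym\Omega_V=q_V=0$ (so the $O(h)$ part of the gradient is skew) together with $-\tfrac{1}{2}\Omega_V^2(T_S,T_S)=\tfrac{1}{2}(dV)^2(T_S,T_S)$, which is \eqref{form-2}. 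A homogenization corrector $\Psi^h$ then realises $U$ in the rescaled gradient: for $\gamma\in(0,\infty)$ write $U=\mathcal{U}_\gamma(\phi)$ with smooth $\phi\in\dot{H}^1(I\times\calY;\R^3)$ and set
$$
\Psi^h(x)=\e h\,\phi\bigl(\pi(x),\,t(x)/h,\,r(\pi(x))/\e\bigr),
$$
so that the computation behind \eqref{nablaph} gives $\nabla_h\Psi^h\stto(\nabla_y\phi,\tfrac{1}{\gamma}\partial_3\phi)$ strongly two-scale, which is exactly $\mathcal{U}_\gamma(\phi)$ in the tangent--normal frame. The regimes $\gamma=\infty$ and $h\gg\e^2$ are handled by analogous correctors carrying only the in-plane $\e$-oscillation; in the case $h\sim\e^2$ the extra $\tfrac{1}{\gamma_1}\varphi\AA$ term in $L_{0,\gamma_1}^{1}$ is produced by adding a normal midplane wiggle $\e^2\varphi(\pi(x),r(\pi(x))/\e)\na(\pi(x))$ to $z^h$, whose quadratic curvature contribution matches via the relation $\e^2\sim h/\gamma_1$. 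I then set $y^h\circ\Phi^h=z^h+h^2\Psi^h$ on $S^h$; properties (i) and (ii) follow directly from the construction, using $q_V=0$ to obtain $q_{V^h}/h\rightharpoonup q_w=B_w$ after the oscillatory contributions average out.

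For (iii) I would write $\nabla_h y^h=I+hK^h+h^2G^h$ with $K^h$ collecting $\Omega_V$ and the $O(h)$ perturbations (arranged so that $\sym K^h$ is of higher order, with the drift absorbed into $G^h$), and $G^h$ collecting the second-order shell corrections together with the order-$1$ part of $\nabla_h\Psi^h$. The smoothness reductions yield uniform $L^\infty$ bounds on $K^h$ and $G^h$, and $h^{-1}\sym K^h\to 0$ strongly in $L^2$ after the absorption just mentioned, so that all hypotheses of Lemma \ref{lem:3} are met. Its conclusion \eqref{opravdaj3}, combined with the strong two-scale convergence $E^h_{\text{app}}=\sym G^h-\tfrac{1}{2}(K^h)^2\stto E$ where $E$ is the right-hand side of \eqref{p36-1} with the chosen relaxation field $U$, delivers the sharp energy limit in all three regimes. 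A standard diagonal extraction finally passes from the smooth approximations to general $(V,B_w)$.

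The main obstacle is the regime $h\sim\e^2$: the relaxation space $L_{0,\gamma_1}^{1}$ carries the explicit coupling $\tfrac{1}{\gamma_1}\varphi\AA$ between the cell-scale oscillation and the second fundamental form, and realising this in the ansatz requires balancing a normal midplane oscillation of amplitude $\e^2\sim h/\gamma_1$ against $\AA$ without disturbing the bending correction $-tb_V$ produced by the $ht\mu_V$ piece. Matching both contributions at the correct order in $h$ while keeping $\sym K^h=o(h)$ and without introducing spurious strains in the $t$-direction is the crux of that regime. A secondary technical point is the measurable selection and smoothing of the cell-formula minimiser $\Pi_{*}$, which is handled by Lemma \ref{lem:2a}.
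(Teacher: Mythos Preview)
Your overall architecture---shell ansatz from Lemma~\ref{ansatz}, homogenization corrector realising the cell minimiser, energy passage via Lemma~\ref{lem:3}, then diagonalisation---matches the paper's proof. The handling of the $h\sim\e^2$ regime by a normal midplane oscillation of amplitude $\e^2$ is also the right idea, and corresponds to the paper's modification $\tilde w^{\delta,h}=w^{\delta,h}+\tfrac{\e^2}{h}\varphi^\delta(\cdot,r/\e)\,n$.

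There is, however, a genuine gap in your reduction step. You write that ``density of smooth infinitesimal bendings in the $H^2$-bending space of $S$ is standard on compact $C^3$ surfaces'' and cite \cite{Lewicka1,Hornung12}. This is not a standard result, and neither reference supplies it for a general $C^3$ surface; the question whether every $H^2$ solution of the first-order system $q_V=0$ can be approximated in $H^2$ by smooth solutions is a nontrivial PDE issue that depends on the type of the surface. Your subsequent algebra relies on this in an essential way: you use $\sym\Omega_V=q_V=0$ exactly to make the $O(h)$ block of the gradient skew, and without an \emph{exact} smooth bending the term $h^{-1}\sym K^h$ does not vanish.

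The paper avoids this problem entirely. It does \emph{not} approximate $V$ by smooth bendings; instead it applies the Lipschitz truncation of Proposition~\ref{P:FJM} to produce $v^{\delta,h}\in W^{2,\infty}$ with $h\|v^{\delta,h}\|_{W^{2,\infty}}\le\delta$, $v^{\delta,h}\to V$ in $H^2$, and $\HH^2(\{v^{\delta,h}\neq V\})=o(h^2)$. These truncations are \emph{not} infinitesimal bendings, but the measure estimate combined with a Lipschitz bound on $q_{v^{\delta,h}}$ yields $h^{-1}q_{v^{\delta,h}}\to 0$ strongly in $L^2$, which is exactly hypothesis~(ii) of Lemma~\ref{lem:3} for $K^h=\Omega_{v^{\delta,h}}$. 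This is the key device you are missing; once it is in place, the remaining organisation of the proof (the explicit normal corrector $p^\delta$ built from $A_\delta^2 n$, the choice of $o^\delta$ to hit $\Pi_\gamma$, and the two-parameter diagonalisation via Lemma~\ref{L:attouche}) proceeds much as you outline.
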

\begin{proof}
The proof is a modification of the proof of the recovery sequence in \cite{Lewicka1}, cf. also \cite{FJM-06}.

By definition and by density, since ${\B}_w\in \calB$, there exist
$w_n\in C^{\infty}_0(S; \R^3)$ such that $q_{w_n} \to B_w$ strongly in $L^2(S)$.
Hence
$$
\limsup_{n\to\infty}\limsup_{h\to 0} \left( \|q_{w_n} - B_w\|_{L^2(S; \QQ)} + h\|w_n\|_{W^{2,\infty}(S; \R^3)}\right) = 0.
$$
Lemma \ref{L:attouche} yields a sequence $n_h$ with $n_h\to\infty$ as $h\to 0$, such that the maps
$$
w^h = w_{n_h}
$$
satisfy
 \begin{equation}\label{eq:jelgotovo}
 \limsup_{h \to 0} h \| \wa^{h} \|_{W^{2,\infty}(S; \R^3)} = 0,
 \end{equation}
and
\begin{equation}
\label{qw-1}
q_{ \wa^{h}} \to \B_w \mbox{ strongly in } L^2(S; \QQ).
\end{equation}
In order to have common proof for all cases (see the case $\gamma=0$) we will assume that  there exists a constant $M_1 > 0$ such that and double index sequence $q_{w^{\delta,h}}$ such that
\begin{equation}\label{citat1}
\|q_{w^{\delta,h}}\|_{L^2} \leq M_1,
\end{equation}
and for all $\delta>0$ we have
\begin{eqnarray} \label{eq:15}
&&\limsup_{h \to 0} h \|\nabla \wa^{\delta,h} \|_{L^4}=0, \ \limsup_{h \to 0} h \|\nabla^2 \wa^{\delta,h} \|_{L^2} \leq M_1; \\
\nonumber && \limsup_{h \to 0} h^2 \|\nabla \wa^{\delta,h} \|_{L^\infty}=0, \ \limsup_{h \to 0} h^3 \|\nabla^2 \wa^{\delta,h} \|_{L^\infty}=0.
\end{eqnarray}
and
\begin{equation} \label{citat2}
q_{w^{\delta,h}} \rightharpoonup B_w, \text{ weakly in } L^2
\end{equation}

Now let $\V \in H^{2}(S,\R^3)$ be an infinitesimal bending of $S$. We  approximate $\V$ by a sequence $\va^{\delta,h} \in
W^{2,\infty}(S;\R^3)$ such that, for each
$\delta>0$ we have:
\begin{eqnarray}
\nonumber & &\limsup_{h \to 0} \|\va^{\delta,h}-\V\|_{H^2(S)}=0,\quad \limsup_{h \to 0} h\|\va^{\delta,h}\|_{W^{2,\infty}(S)} \leq \delta \\
& & \label{eq:16}\\
& &\nonumber \lim_{h \to 0} \frac{1}{h^2} \HH^2\left(\{ x \in S : \va^{\delta,h}(x) \neq \V(x) \}\right) = 0.
\end{eqnarray}
The existence of such $\va^{\delta,h}$ follows by Proposition  \ref{P:FJM} in
the appendix.
We claim that
\begin{equation}\label{eq:18}
\frac{q_{\va^{\delta,h}}}{h} \to 0 \mbox{ strongly in }L^2(S)
\end{equation}
as $h\to 0$. In fact,

\begin{eqnarray}
\label{eq:20}\frac{1}{h} \|q_{ \va^{\delta,h}}\|_{L^2(S)} &\leq& \frac{1}{h} \left|\{ x \in S; \va^{\delta,h}(x) \neq \V(x)\}\right|^{1/2}
\cdot \|\nabla_{tan}{ \va^{\delta,h}}\|_{L^{\infty}(\{v^{\delta, h}\neq V\})},
\end{eqnarray}
and this converges to zero by \eqref{eq:16} and because $q_{v^{\delta, h}}$ is uniformly bounded in $L^{\infty}(S)$ (see below).
From this we deduce (\ref{eq:18}).
\\
To see that $q_{v^{\delta, h}}$ is uniformly bounded in $L^{\infty}(S)$,
note that the Lipschitz constants of all $q_{\va^{\delta,h}}$ are bounded by ${\delta}/{h}$.
Since $q_{v^{\delta, h}} = 0$ almost everywhere on $\{\va^{\delta,h} = V\}$, we have
$$
|q_{ \va^{\delta,h}}(x)| \leq C \tfrac{1}{h}\dist\big(x,\{\va^{\delta,h}=\V\} \big) \leq C.
$$
The last estimate is true because (due to \eqref{eq:16} and bounded curvature of $S$) for small $h$
the set $\{v^{\delta, h} \neq V\}$ cannot contain a disk of radius $h$.

Now let $o^{\delta}, p^\delta \in C^1(S^1;C^1(\calY; \R^3))$ and set $p^{\delta, h} = p^{\delta}(\cdot, r/\e)$
and $o^{\delta,h} = o^{\delta}(\cdot, r/\e)$.

We define $z^{\delta, h} : S^h\to\R^3$ by
$$
z^{\delta, h} = id + h\left( v^{\delta, h} + hw^{\delta, h}+ t\mu_{v^{\delta, h} + hw^{\delta, h}}\right) + h^3p^{\delta, h}( \Phi^h )
+ \e h^2 o^{\delta, h}(\Phi^h),
$$
and we define $y^{\delta, h} : S^1\to\R^3$ by $y^{\delta, h}(\Phi^h) = z^{\delta, h}$.
Clearly, for each $\delta$, as $h\to 0$ we have
$$
y^{\delta, h} \to \pi \mbox{ strongly in }H^1(S^1).
$$
For $x\in S$ we define
$$
V^{\delta, h}(x) = \frac{1}{h^2}\left(\int_{I^h} z^{\delta, h}(x + tn(x))\ dt - x\right) = \frac{1}{h}\left(\int_I y^{\delta, h}(x + tn(x))\ dt\ - x\right).
$$
Hence
$$
\V^{\delta,h} = \va^{\delta,h} + h\wa^{\delta,h} + \e h \t{o}^{\delta} + h^2 \t p^\delta.
$$
Here we have introduced $\t o^{\delta, h} : S\to\R^3$ by
$$
\t{o}^{\delta, h}(x) = \int_{I} o^{\delta, h}\left( x + tn(x) \right)\ dt,
$$
and $\t{p}^{\delta, h}$ is defined similarly.

From  \eqref{citat2} and (\ref{eq:18})  we deduce that for each $\delta$, as $h\to 0$ we have
\begin{equation} \label{eq:iggg2119}
\frac{1}{h} q_{V^{\delta,h}} = \frac{1}{h}q_{\va^{\delta,h}} + q_{\wa^{\delta,h}} + \e q_{\t{o}^{\delta}} + h q_{\t p^\delta}
\rightharpoonup \B_w\mbox{ weakly in } L^2(S;\QQ).
\end{equation}
Thus the first two parts of the claim are satisfied (for each $\delta$).
Also from  (\ref{eq:18}) we deduce
\begin{eqnarray} \label{eq:iggg2122}
&& \limsup_{h \to 0} \left\| \frac{1}{h}  q_{\V^{\delta,h}}\right\|_{L^2} \leq R\Big( \limsup_{h\to 0}\|q_{ \wa^{\delta,h}}\|_{L^2}+\|\bs o^\delta\|_{L^2(S;H^1(I \times \calY;\R^3))} \\ \nonumber && \hspace{+25ex}+ \limsup_{h\to 0}h\|q_{\int_I \bs p^\delta \ud t}\|_{L^2}\Big),
\end{eqnarray}
for some $R>0$. Notice that for $\wa^{\delta,h}$ that satisfies (\ref{qw-1})  we have for all $\delta>0$
\begin{equation}\label{anna100}
 \limsup_{h\to 0}\|q_{\wa^{\delta,h}}\|_{L^2} =\|\B_w\|_{L^2}.
\end{equation}

In order to prove the third part, we need to understand the limiting behaviour of
$$
E^{\delta,h} = \frac{1}{h^2}\left(\sqrt{ (\nabla_h y^{\delta, h})^T\nabla_h y^{\delta, h}}-\id\right).
$$
Define $Q : S^h\to\R^{3\times 3}$ by \eqref{defQ}. Lemma \ref{ansatz} shows that
\begin{align*}
\nabla_h y^{\delta, h}(\Phi^h) = \nabla z^{\delta, h} = I + h \O_{v^{\delta,h}} + h^2G^{\delta, h}(\Phi^h),
\end{align*}
where $G^{\delta, h} : S^1\to\R^{3\times 3}$ is defined by the following equation on $S^h$:
\begin{align*}
G^{\delta, h}(\Phi^h) &= \Omega_{w^{\delta, h}} - \frac{t}{h} b_{v^{\delta, h}}(T_S, T_S) - tb_{w^{\delta, h}}(T_S, T_S)
- \frac{t^2}{h}\nabla_{tan}\mu_{v^{\delta,h} + hw^{\delta, h}}\AA
\\
&+ \frac{1}{h}\left(\nabla_{tan} v^{\delta,h} + h\nabla_{tan}w^{\delta, h} + t\nabla_{tan}\mu_{v^{\delta, h} + hw^{\delta, h}} \right)T_S Q
\\
&+ h\nabla_h p^{\delta, h}(\Phi^h) + \e \nabla_h o^{\delta, h}(\Phi^h).
\end{align*}

 In the case $\gamma = \lim h/\e$ is nonzero, we deduce from \eqref{nablaph} (applied with $P = p^{\delta}$ and $P = o^{\delta}$) that, as $h\to 0$,
\begin{equation}
\label{phoh}
 h\nabla_h p^{\delta, h} + \e \nabla_h o^{\delta, h} \stto \Xi^{\delta}.
\end{equation}
Here we have introduced $\Xi^{\delta} : S^1\times\calY \to \R^{3\times 3}$ by
$$
\Xi^{\delta}(x, y) = \left(\d_n p^{\delta}(x, y) + \frac{1}{\gamma} \d_n o^{\delta}(x, y)\right)\otimes n(x)
+ \left( \gamma\nabla_y p^{\delta}(x, y) + \nabla_y o^{\delta}(x, y)\right) \nabla_{tan} r(x)\, T_S(x).
$$
Observe that \eqref{phoh} remains true for $\gamma = 0$ provided that $\d_n o^{\delta} \equiv 0$,
and for $\gamma = \infty$ provided that $p^{\delta, h}$ does not depend on $y$. This follows from \eqref{nablaph}.
\\
Lemma \ref{irre} and \eqref{qw-1} shows that
\begin{equation}
\label{irre-4}
\sym\Omega_{w^{\delta, h}} = q_{w^{\delta, h}}(T_S, T_S) \to B_w(T_S, T_S) \mbox{ strongly in }L^2(S).
\end{equation}

And
$$
b_{v^{\delta,h}}(T_S, T_S) \to b_V(T_S, T_S)\mbox{ in }L^2(S).
$$
This holds for each $\delta>0$ as $ h \to 0$.

We conclude that
\begin{equation}
\label{irre-5}
\sym G^{\delta, h} \stto B_w(T_S, T_S) - tb_V(T_S, T_S)
+ \sym\Xi^{\delta}
\end{equation}
strongly two-scale in $S^1$ as $h\to 0$. Hence the map $E^{\delta, h}_{app} : S^1\to\R^{3\times 3}$ defined by
$$
E^{\delta,h}_{app} = \sym\bs G^{\delta,h} - \frac{1}{2}(\O_{v^{\delta, h}})^2
$$
converges strongly two-scale on $S^1$ to $E^{\delta} : S^1\times\calY\to\R^{3\times 3}$, given by
$$
E^{\delta}(x, y) = B_w(x)(T_S(x), T_S(x)) - t(x)b_V(x)(T_S(x), T_S(x)) - \tfrac{1}{2} \bs \Omega_V^2(x) + \sym\Xi^{\delta}(x, y).
$$
We now wish to apply Lemma \ref{lem:3} to $E^{\delta, h}$ and $E^{\delta, h}_{app}$ in order to conclude that

\begin{eqnarray}
\label{irre-6}
&& h^{-4} \int_{S}\int_I W\left( x+tn(x), r(x+tn(x))/\e, I + h^2E^{\delta, h}(x+tn(x)) \right)\ud t \ud \mathcal{H}^2 \to \\ \nonumber&&\int_{S}\int_I\int_{\calY} \mathcal{Q} (x+tn(x), E^{\delta}(x+tn(x),y))\ dy\ dt \ d\mathcal{H}^2 + \tilde{E}(\delta),
\end{eqnarray}
where $|\tilde{E}(\delta)| \to 0$ as $\delta\to 0$. In order to prove this, it remains to
verify that the hypotheses of Lemma \ref{lem:3} are indeed satisfied.
\\
But in fact, it is not difficult to see that there exists a constant $M$ such that for all $\delta>0$ small enough
\begin{equation} \label{philippe}
\limsup_{h \to 0} \left(\|\sym G^{\delta, h}\|_{L^2(S^1)} + \|\O_{v^{\delta, h}}\|_{L^4(S)}\right) \leq M
\end{equation}
This follows from (\ref{eq:iggg2122}) and the construction of $p^\delta$, $o^\delta$ below.
Moreover, hypotheses (ii) and (iii) of Lemma \ref{lem:3} are clearly satisfied by virtue of Sobolev embedding
and by (\ref{eq:15}), (\ref{eq:16}), (\ref{eq:18}). Hypothesis (iv) is a direct consequence of
(\ref{eq:15}) and that, as $h\to 0$,
\begin{align*}
h\| \nabla^2 \va^{\delta,h} \|_{L^4} &\leq Ch \sqrt{\|\nabla^2 \va^{\delta,h} \|_{L^\infty}} \sqrt {\|\nabla^2 \va^{\delta,h} \|_{L^2}}\to 0,
\\
h^2\| \nabla^2 \bs w^{\delta,h} \|_{L^4} &\leq h^2\sqrt{\|\nabla^2  \bs w^{\delta,h} \|_{L^{\infty}}} \sqrt {\|\nabla^2 \bs w^{\delta,h} \|_{L^2}} \to 0,
\end{align*}
which follow from (\ref{eq:15}) and (\ref{eq:16}). Hence the hypotheses of Lemma \ref{lem:3} are indeed satisfied, and
\eqref{irre-6} follows.

It remains to choose the oscillations $p^{\delta}$ and $o^{\delta}$ in an optimal way. We have to distinguish the three cases.

 \textit{The case $\gamma \in  (0,\infty)$.}
Let $A_{\delta} \in C^2 (S; \so 3)$ be such that $A_{\delta} \to \O_V$ strongly in $H^1(S)$ as $\delta\to 0$. Define
$p^{\delta} : S^1\times\calY\to\R^3$ by
$$
p^{\delta}(x, y) = t(x)\left( \frac{|A_{\delta}(x) n(x)|^2}{2}\ I + A_{\delta}^2(x) \right)n(x).
$$
Then clearly $\nabla_yp^{\delta} \equiv 0$. We claim that
\begin{equation}
\label{irre-8}
\sym\left( \d_n p^{\delta}\otimes n \right) - \frac{1}{2}A_{\delta}^2 = -\frac{1}{2}A_{\delta}^2(T_S, T_S).
\end{equation}
In fact,
\begin{align*}
A_{\delta}^2 - T_S A_{\delta}^2 T_S &= 
(n\otimes n)A_{\delta}^2 - (n\otimes n)A_{\delta}^2 (n\otimes n) + A_{\delta}^2 (n\otimes n)
\\
&= 2\sym\left( A_{\delta}^2 n \otimes n + \frac{|A_{\delta} n|^2}{2}\ n\otimes n\right)
\end{align*}
because $A_{\delta}$ is skew symmetric. And this equals $2\sym(\d_np^{\delta}\otimes n)$ because clearly
$$
\d_np^{\delta} = \frac{|A_{\delta} n|^2}{2}n + A_{\delta}^2n.
$$
Thus \eqref{irre-8} is proven.
\\
From \eqref{irre-8} and from the definition of $E^{\delta}$ and $\Xi^{\delta}$ we conclude:
$$
E^{\delta} = (B_w - tb_V - \frac{1}{2}(A^{\delta})^2)(T_S, T_S) + \tilde{\mathcal{U}}_{\gamma}(o^{\delta}),
$$
where
$$
\tilde{\mathcal{U}}_\gamma(o^\delta) := \sym\left( \nabla_yo^{\delta}\nabla_{tan}rT_S + \gamma^{-1}\d_n o^{\delta}\otimes n \right).
$$
Notice that $\tilde{\mathcal{U}}_\gamma(o^\delta)=\mathcal{U}_\gamma(\tilde{o}^\delta)$,
which is just $\mathcal{U}_\gamma$ as in Definition~\ref{def:relfield} and
$\tilde{o}^\delta_i=o^\delta \cdot \taub_i$.
Now we choose ${\tilde o}^\delta \in C^1(S^1;\dot{C}^1(\calY;\R^3))$ in such a way that
\begin{equation}
\mathcal{U}_\gamma(\tilde{o}^\delta) \to \Pi_\gamma\big(\B_w + \tfrac{(dV)^2}{2}, -b_V\big)\mbox{ strongly in }L^2(S;\R^{3 \times 3})
\end{equation}
as $\delta\to 0$.
Here, the operator $\Pi_\gamma$ is as in Lemma~\ref{lem:2a}. Then
\begin{equation}
\label{limedelta}
E^{\delta} \to (B_w - tb_V)(T_S, T_S) + \frac{(dV)^2}{2} + \Pi_\gamma\big(\B_w + \frac{(dV)^2}{2}, -b_V\big)\mbox{ strongly in }L^2(S;\R^{3 \times 3}),
\end{equation}
as $\delta \to 0$ because $- A_{\delta}^2(T_S, T_S) \to (dV)^2$ by \eqref{form-2}.
\\
By \eqref{limedelta} and by the above results, we see that
\begin{eqnarray*}
g(\delta,h) = \|\ya^{\delta,h}-\pi \|_{H^1(S^1)}+\|\bs V^{\delta,h}-\bs V\|_{H^1(S)}+\tilde{d}^K(\tfrac{1}{h} q_{ \V^{\delta,h}}, \B_w) +
\left| \tfrac{1}{h^4} I^h(\ya^h) - I_\gamma (\V,\B_w) \right|
\end{eqnarray*}
satisfies $
\limsup_{\delta \to 0} \limsup_{h\to 0} g(\delta,h)=0.
$
Here $\tilde{d}^K: L^2(S;\Ma^3) \times L^2(S;\Ma^3) \to \R $ is defined in the following way: For $K>0$
we know there exists a metric $d^K$ which defines the weak topology on the ball of radius $K$. We define:
$$
\tilde{d}^K(\Ma_1,\Ma_2)=\left\{\begin{aligned}d^K(\Ma_1,\Ma_2), & \text{ if } \|\Ma_1\|_{L^2} <K \text{ and } \|\Ma_2\|_{L^2}<K,\\ +\infty,& \text{ otherwise.}    \end{aligned} \right.
$$
The constant $K=K(R,\|\B_w\|_{L^2}, \|\bs V\|_{H^2})$ we choose in a way that the right hand side of
(\ref{eq:iggg2122}) is bounded by e.g. $\tfrac{K}{2}-1$.

Lemma \ref{L:attouche} then yields a sequence $\delta_h\to 0$ such that $g(\delta_h, h) \to 0$ as $h\to 0$.

 \textit{The case $\gamma=\infty $.}
Let $A_{\delta} \in C^2 (S; \so 3)$ be such that $A_{\delta} \to A$ strongly in $H^1(S)$ as $\delta\to 0$. Define
$\zetaa^\delta \in C^1(S;\dot{C}^1(I\times \calY;\R^2))$, $\psi^\delta \in C^1(S;\dot{C}^1(I \times \calY))$, $\bs c^\delta \in C^1(S;\dot{C}^1(I;\R^3))$ such that
\begin{equation*}
\mathcal{U}_\infty(\zetaa^\delta, \psi^\delta, \bs c^\delta)\to \Pi_\infty\big(\B_w + \tfrac{(dV)^2}{2},-b_V\big)\mbox{ strongly in }L^2(S;\R^{3 \times 3})
\end{equation*}
as $\delta\to 0$.
We will use the following fact: if $f : I\times \calY \to\R^3$ then $F(x, y) = \int_0^{t(x)} f(s, y)\ ds$ satisfies $\d_n F(x,y) = f(t(x), y)$.
Again we wish to have $p^{\delta}$ independent of $y$, in order to ensure the validity of \eqref{phoh}. We define
$$
p^{\delta}(x, y) = t(x)\left( \frac{|A_{\delta}(x) n(x)|^2}{2}\ I + A_{\delta}^2(x) \right)n(x) + 2\int_0^{t(x)} c_\alpha(s)\ ds \taub^\alpha+ \int_0^{t(x)} c_3(s)\ ds \taub^3.
$$
Then
$$
\d_n p^{\delta} = \frac{|A_{\delta}n|^2}{2}n + A_{\delta}^2 n + 2c_\alpha(t)\taub^\alpha+c_3(t)\taub^3.
$$
For $x\in S^1$ and $y\in\calY$ set
$$
o^{\delta}(x, y) = \zeta^{\delta}_{\alpha}(\pi(x), t(x), y)\tau^{\alpha}(x) + 2\psi^{\delta}(\pi(x), t(x), y) n(x).
$$
Then
$$
\nabla_y o^{\delta} =   \tau^{\alpha} \otimes \nabla_{y}\zeta^{\delta}_{\alpha} + 2  n \otimes \nabla_y\psi^{\delta}.
$$
Since $\gamma = \infty$, we have
$$
\sym\Xi^{\delta} =
\sym\left( \d_n p^{\delta} \otimes n + \nabla_y o^{\delta} \nabla_{tan} r\, T_S\right) = \mathcal{U}_\infty(\zetaa^\delta, \psi^\delta, \bs c^\delta),
$$
From now on the proof is analogous to the case $\gamma \in (0,\infty)$.

\textit{Construction for $\gamma=0$ and $\lim_{h \to 0}\tfrac{\eh^2}{h} \in [0,\infty) $ }:

In this one has to modify, in addition, the maps $\wa^{\delta,h}$.
Let $A_{\delta}$ be as before. In the case $\lim_{h \to 0} \tfrac{\eh^2}{h}=0$ choose
$\zetaa^\delta \in C^1(S;\dot{C}^1(\calY;\R^2))$, $\varphi^\delta \in C^1(S;\dot{C}^2(\calY))$, $\bs g^\delta \in C^1(S;\dot{C}^1(I \times \calY;\R^3))$ such that
\begin{equation}
\mathcal{U}^0_0(\zetaa^\delta, \varphi^\delta, \bs g^\delta) \to \Pi_0^0 \big(\B_w + \frac{(dV)^2}{2},-b_V\big)\mbox{ strongly in }L^2(S;\R^{3 \times 3}),
\end{equation}
as $\delta\to 0$. In the case $\lim_{h \to 0} \tfrac{\eh^2}{h}=\tfrac{1}{\gamma_1} \in (0,\infty)$ choose them such that
\begin{equation}\label{anna2000}
\mathcal{U}^{1}_{0,\gamma_1}(\zetaa^\delta, \varphi^\delta, \bs g^\delta)\to
\Pi_0^{1,\gamma_1} \big(\B_w + \frac{(dV)^2}{2},-b_V\big)\mbox{ strongly in }L^2(S^1).
\end{equation}
Extend $\zeta^{\delta}$ trivially to $S^1$ and define $o^{\delta}(x, y) = \zeta^{\delta}_{\alpha}(x, y)\tau^{\alpha}(x)$. Then
$\d_n o^{\delta} \equiv 0$, so \eqref{phoh} remains true. We define
\begin{eqnarray*}
p^{\delta}(x, y) &=& t(x)\left( \frac{|A_{\delta}(x) n(x)|^2}{2}\ I + A_{\delta}^2(x) \right)n(x) +2 \int_0^{t(x)} g_\alpha^{\delta}(\pi(x), s, y)\ ds\ \tau^\alpha(x) \\ & &+\int_0^{t(x)} g_3^{\delta}(\pi(x), s, y)\ ds\ \tau^3(x).
\end{eqnarray*}
We define the modified fields
\begin{eqnarray*}
 \tilde{\wa}^{\delta,h}&=&\wa^{\delta,h} +\tfrac{\e^2}{h} \varphi^\delta(\cdot,r/\e) \na,
\end{eqnarray*}
where $\wa^{\delta,h}$ is defined by the property (\ref{eq:jelgotovo}).

Notice that $\tilde{\wa}^{\delta,h}$ satisfies the condition (\ref{eq:15}) with $M_1\leq C \| \nabla^2_y \varphi^\delta \|$, for some constant $C>0$, independent of $\delta$.
Also using the following facts valid for every fixed $\delta>0$:
\begin{align}
\frac{h}{\e}\left( \nabla\t w^{\delta, h} - \nabla w^{\delta, h} \right) &  \mbox{ bounded in }L^{\infty}(S)
\\
\frac{h}{\e}\left( \Omega_{w^{\delta,h}}-\Omega_{\tilde{w}^{\delta,h}} \right) & \mbox{ bounded in }L^{\infty}(S)
\\
\frac{h}{\e}\left(b_{\tilde{w}^{\delta,h}}-\left(b_{w}^{\delta,h}+ \tfrac{1}{h}\sum_{i,j=1}^2 \partial_{y_i y_j} \varphi^\delta \taub^i \otimes \taub^j\right) \right) &  \mbox{ bounded in }L^{\infty}(S) \\
\label{citat3} q_{\tilde{\wa}^{\delta,h}} = q_{\wa^{\delta,h}}+\tfrac{\e^2}{h} \varphi^\delta(\cdot,r/\e)  \AA & \stto B_w+\tfrac{1}{\gamma_1} \varphi^\delta(x,y)\AA,
\end{align}
 we can repeat the same argument as in the case $\gamma \in (0,\infty)$.
Namely, notice that (\ref{citat1}) is valid and thus  the right hand side of (\ref{eq:iggg2122}) can be bounded, by a bound independently of $\delta$. It can be easily seen that (\ref{eq:15}), (\ref{citat2}) are valid. Instead of (\ref{irre-4}) we have (\ref{citat3}).

\end{proof}

\section{Convex shell} \label{sesti}
In this chapter we shall identify the $\Gamma$-limit for convex shells in the remaining case,
i.e. $h\ll\e^2$.
We want to demonstrate the stronger influence of the geometry in this case. We work under the assumption that there exists $C>0$ such that
\begin{equation}\label{asss2}
 \AA(\hx)
\taub \cdot \taub  \geq C  \taub \cdot \taub , \forall \hx \in S, \taub \in T_{\hx} S.
\end{equation}

\begin{definition}\label{def:relfieldcon}
  For $\hx \in S$ we define the following operator
  \begin{eqnarray*}
 && \mathcal{U}_0^{2,c}: \dot{L}^2(\calY;\R^{2\times 2}_{\sym})   \times  L^2(I\times\mathcal Y;\R^3) \to L^2 (I \times\mathcal Y;\R^{3 \times 3}_{\sym}), \\
  && \mathcal{U}^{2,c}_0 (\B,\bs g)= \sum_{i,j=1}^3 \left(\begin{array}{cc}
        \B
        & \begin{array}{cc} \bs g_1\\ \bs g_2\end{array}\\
        (\bs g_1,\;\bs g_2)&\bs g_3\\
      \end{array}\right)_{ij} \taub^i \otimes \taub^j\,
\end{eqnarray*}
and  function space of \textit{relaxation fields}
  \begin{equation}
  L_0^{2,c}(I\times\mathcal Y;\R^{3 \times 3}_{\sym}):=\left\{ \mathcal{U}_0^{2,c}(B,g)\,: B\in \dot{L}^2(\calY;\R^{2\times 2}_{\sym}),\ g \in L^2(I\times\mathcal Y;\R^3)\right\}.
  \end{equation}
\end{definition}
Again as before it can be seen that $L_0^{2,c}(I\times\mathcal Y;\R^{3 \times 3}_{\sym})$ is a
closed subspace of $L^2(I \times \calY;\R^{3 \times 3}_{\sym})$. We also define the functional $I_0^{2,c}:H^2(S;\R^3) \times L^2(S;\mathbb{S}))\to \R$
\begin{equation} \label{eq:functional}
I_0^{2,c} (\V,\B_w)=\int_S \mathcal{Q}_0^{2,c}(\hx,\B_w+\tfrac{1}{2} (dV)^2,\, -b_V \big) \ud \hx,
\end{equation}
with the quadratic form $ \mathcal{Q}_0^{2,c}(x):\mathbb{S}(x)_{\sym} \times \mathbb{S}(x)_{\sym} \to \R$:
\begin{equation} \label{defQgi111111}
 \mathcal{Q}_0^{2,c}(x,q^1,q^2)= \inf_{\bs U \in L_0^{2,c} (I \times \calY; \R^{3 \times 3}_{\sym} ) }
\iint_{I \times \mathcal{Y}} \mathcal{Q}\Big(x+tn(x),y,q^1+tq^2+\bs U \Big) \ud t \ud y.
\end{equation}
As before, it is easy to see that the definition is equivalent to the following one:
\begin{eqnarray} \label{defQgi111111111}
 \mathcal{Q}_0^{2,c}(x,q^1,q^2)&=& \\\nonumber & & \hspace{-20ex} \inf_{\dot{\B} \in \dot{L}^2(\calY;\M^2_{\sym})}
\iint_{I \times \mathcal{Y}} \mathcal{Q}_2\Big(x+tn(x),y,q^1+tq^2+\sum_{i,j=1}^2 \dot{B}_{ij} \taub_i \otimes \taub_j) \ud t \ud y.
\end{eqnarray}

In the case when $\mathcal{Q}$ does not depend on $t$ we have that
\begin{eqnarray*}
\tilde{\mathcal{Q}}_0^{2,c}(x,q^1,q^2) &=& \inf_{ \dot{\B} \in \dot{L}^2(\calY;\M^2_{\sym}) }\int_{\mathcal{Y}} \mathcal{Q}_2(x,y,q^1+\sum_{i,j=1}^2(\dot{ \B})_{ij}\taub^i \otimes \taub^j ) \ud y\\ \nonumber & & + \frac{1}{12}
\int_{\mathcal{Y}} \mathcal{Q}_2 (x,y,q^2) \ud y.
\end{eqnarray*}
Under the assumption (\ref{asss2}) it is well-known that
$\mathcal{B} = L^2(S,\mathbb{S})$, cf. e.g. \cite{Ciarletshell00}, \cite{Lewicka1}.
Thus if one wants to additionally to relax the functional $I_0^{2,c}$ with respect to $\B_w$, one in this case obtains the functional
 $ \tilde{I}_0^{2,c}:H^2(S;\R^3)\to \R$
\begin{equation} \label{eq:functional1}
 \tilde{I}_0^{2,c} (\V)=\frac{1}{12} \int_S \int_{\mathcal{Y}} \mathcal{Q}_2(\hx,y, b_V)\ud y \ud \hx.
\end{equation}
This functional is the same as in the ordinary von K\'arm\'an model.
For the form $\mathcal{Q}_0^{2,c}$ one can make assertions analogous
to Lemma \ref{lem:2a} with the appropriate operator $\Pi_0^{2,c}$ and Lemma \ref{lem:2b}.
We introduce the space
\begin{align*}
FL(S; \dot{C}^{\infty}(\calY)) =
\Big\{(x, y)\mapsto
\sum_{k\in \Z^2, \ |k| \leq n, \ k \neq 0} & c^k(x) e^{2 \pi i  k \cdot  y} :
\\
& n \in \mathbf{N} \mbox{ and }c^k \in C_0^1(S;\mathbb{C})\mbox{ with }
\overline{c}^k=c^{-k}
\Big\}.
\end{align*}
By Fourier transform it can be easily seen that $FL(S; C^{\infty}(\calY))$ is dense in $L^2(S;\dot{H}^m(\calY))$, for any $m\in \N_0$.
The following lemma resembles Lemma 3.3 in \cite{Schmidt-07}:
\begin{lemma} \label{lm:anna100}
Assume (\ref{asss2}) and let $\dot{\B} \in L^2(S;\dot{L}^2(\calY;\mathbb{S}))$.
Then there exist unique $\wa \in L^2(S;\dot{H}^1(\calY;\R^2))$ and $\varphi \in L^2(S;\dot{L}^2(Y))$ such that
\begin{equation} \label{zasrrr}
\sum_{i,j=1,2}\big(\sym \nabla_y \wa\big)_{ij}\taub^i \otimes \taub^j+\varphi \AA=\dot{\B}.
\end{equation}
Moreover, if $\dot{\B}_{ij} \in FL(S; \dot{C}^{\infty}(\calY))$ for every $i,j=1,2$ then
$\wa_i \in FL(S;\dot{H}^1(\calY))$, for $i=1,2$ and $\varphi \in FL(S;\dot{H}^1(\calY))$
\end{lemma}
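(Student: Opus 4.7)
The plan is to diagonalize the problem pointwise in $x$ by Fourier series in $y$, reducing it to a $3\times 3$ linear system for each Fourier mode $k \in \Z^2\setminus\{0\}$, and to show that the convexity hypothesis \eqref{asss2} makes this system uniformly invertible in $k$.

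First, I work in local coordinates, so that for fixed $x \in S$ the unknown pair becomes $(w(x, \cdot), \varphi(x, \cdot))$ with $w$ taking values in $\R^2$ and $\varphi$ scalar, and the equation \eqref{zasrrr} reads coordinatewise
\[
  \sym \nabla_y w(x, y) + \varphi(x, y) \AA(x) \;=\; \dot\B(x, y) \qquad\text{in } \R^{2\times 2}_{\sym},
\]
where I identify $\AA(x)$ and $\dot\B(x,y)$ with their matrices in the basis $(\taub_1,\taub_2)$. Writing the $Y$-mean-zero Fourier expansions
\[
  \dot\B(x,y) = \sum_{k\neq 0} \widehat{\dot\B}{}^k(x) e^{2\pi i k\cdot y}, \qquad
  w = \sum_{k\neq 0}\widehat w{}^k e^{2\pi i k\cdot y}, \qquad
  \varphi = \sum_{k\neq 0}\widehat\varphi{}^k e^{2\pi i k\cdot y},
\]
the equation decouples into, for each $k \neq 0$,
\[
  \pi i\bigl(k\otimes \widehat w{}^k + \widehat w{}^k\otimes k\bigr) + \widehat\varphi{}^k \AA(x) \;=\; \widehat{\dot\B}{}^k(x).
\]
This is a linear system $M(k, x)\bigl(\widehat w{}^k, \widehat\varphi{}^k\bigr)^T = \widehat{\dot\B}{}^k(x)$ in $\C^3$, where a short computation gives
\[
  \det M(k, x) \;=\; -2\pi^2 \,\bigl(\AA(x) k^\perp\cdot k^\perp\bigr),\qquad k^\perp=(-k_2, k_1).
\]
The main (and only) obstacle is nondegeneracy of this determinant, and this is precisely where the convexity assumption enters: by \eqref{asss2} we have $\AA(x)k^\perp\cdot k^\perp \geq C|k|^2$ uniformly in $x\in S$ and $k\neq 0$. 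In particular, $M(k,x)$ is invertible for every $k\neq 0$, which already yields uniqueness.

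For existence and regularity I apply Cramer's rule. Since the cofactors of $M(k,x)$ are at most linear in $k$ (the last-column cofactors are quadratic), one obtains the pointwise bounds
\[
  |k|\,|\widehat w{}^k(x)| \;\lesssim\; |\widehat{\dot\B}{}^k(x)|, \qquad
  |\widehat\varphi{}^k(x)| \;\lesssim\; |\widehat{\dot\B}{}^k(x)|,
\]
with constants depending only on the lower bound $C$ in \eqref{asss2} and on $\eta_1,\eta_2$ in \eqref{eq:0}. Since the $M(k,x)$ satisfy $\overline{M(k,x)} = M(-k,x)$ with real coefficients, the reality/Hermitian conditions $\overline{\widehat w{}^k}=\widehat w{}^{-k}$ and $\overline{\widehat\varphi{}^k}=\widehat\varphi{}^{-k}$ transfer from $\dot\B$ to $w$ and $\varphi$, giving real-valued solutions. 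Summing the squares of the above inequalities over $k$ and using Parseval,
\[
  \|w(x,\cdot)\|_{\dot H^1(\calY;\R^2)}^2 \;\lesssim\; \sum_{k\neq 0}|\widehat{\dot\B}{}^k(x)|^2 = \|\dot\B(x,\cdot)\|_{\dot L^2(\calY)}^2,\qquad
  \|\varphi(x,\cdot)\|_{\dot L^2(\calY)}^2 \;\lesssim\; \|\dot\B(x,\cdot)\|_{\dot L^2(\calY)}^2.
\]
Integrating over $S$ yields $w \in L^2(S;\dot H^1(\calY;\R^2))$ and $\varphi \in L^2(S;\dot L^2(\calY))$, establishing the first assertion.

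Finally, suppose $\dot\B_{ij} \in FL(S;\dot C^\infty(\calY))$, so that the Fourier series of $\dot\B$ contains only finitely many modes $|k|\leq n$ with coefficients in $C_0^1(S;\C)$. Then $\widehat w{}^k$ and $\widehat\varphi{}^k$ are obtained from $\widehat{\dot\B}{}^k$ by multiplication by $M(k,x)^{-1}$, whose entries are $C^1$ functions of $x$ (the surface being $C^3$, so $\AA\in C^1$) and which vanish for $k=0$. Consequently $\widehat w{}^k, \widehat\varphi{}^k \in C_0^1(S;\C)$ are supported in the same finite set of modes, proving that $w_i,\varphi \in FL(S;\dot H^1(\calY))$.
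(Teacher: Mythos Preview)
Your proof is correct and follows essentially the same route as the paper: Fourier decomposition in $y$, reduction to a $3\times 3$ linear system for each mode, and the convexity assumption \eqref{asss2} to bound the determinant from below by $C|k|^2$, yielding the estimates $|k|\,|\widehat w{}^k|+|\widehat\varphi{}^k|\lesssim|\widehat{\dot B}{}^k|$. Your explicit closed form $\det M(k,x)=-2\pi^2\,\AA(x)k^\perp\!\cdot k^\perp=-2\pi^2\,\cof\AA(x)\,k\cdot k$ makes the link to the paper's remark about the elliptic equation $\cof\AA:\nabla_y^2\varphi=\curl_y\curl_y\dot B$ transparent, and your check of the reality condition $\overline{M(k,x)}=M(-k,x)$ is a useful detail the paper omits.
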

\begin{proof}
One possible proof is to apply the operator $\curl_y\curl_y$ to both sides of \eqref{zasrrr},
which leads to the PDE
$$
\cof\AA : \nabla^2_y\p = \curl_y\curl_y \dot{B},
$$
which by virtue of \eqref{asss2} is an elliptic PDE with constant coefficients (for each $x$). We prefer to give a direct proof.
\\
There exist $b_{ij}^k$ such that for all $i,j=1,2$:
\begin{eqnarray*}
&&\dot{\B}(\hx,y)_{ij}=\sum_{k\in \Z^2} b^k_{ij}(\hx)  e^{2 \pi i  k \cdot y}, \text{ where } \sum_{k\in \Z^2}\| b^k_{ij} \|_{L^2} <\infty, \overline{b}^k_{ij}=b^{-k}_{ij},\ b^0_{ij}=0.
\end{eqnarray*}
We assume that for $i=1,2$:
$$
\wa_j= \sum_{k \in \Z^2} c^k_{j}(\hx)  e^{2 \pi i  k \cdot y},\ \overline{c}^k_{j}=c^{-k}_{j},\ c^0_j=0
$$
and
$$ \varphi=\sum_{k \in \Z^2} d^k(\hx)  e^{2 \pi i k \cdot  y },\ \overline{d}^k=d^{-k}, \ d^0=0. $$
The equation (\ref{zasrrr}) is equivalent to the following
problem for every $(k_1,k_2) \in \Z^2$ find complex coefficients $c_j^k$, $b_{ij}^k$, $d^k$ such that
\begin{eqnarray*}
k_1 c^k_1+d^k \AA_{11}&=& b^k_{11} \\
\frac{1}{2}( k_2 c^k_1+k_1 c^k_2)+d^k\AA_{12}&=& b^k_{12} \\
k_2 c^k_2+d^k \AA_{22}&=& b^k_{22}.
\end{eqnarray*}

By (\ref{eq:0}) and (\ref{asss2}) it is easy to see that there exists $C>0$ such that the determinant of the system is bounded from below
by
$C (k_1^2+k_2^2). $
Using this it follows that there exists $C>0$ such that
$$ | d^k(\hx)|^2+\sum_{i=1}^2 |k|^2 |c^k_i(\hx)|^2 \leq C(\sum_{i,j=1}^2 |b^k_{ij}(\hx)|^2), \forall \hx \in S. $$
Now all claims follow easily.
\end{proof}

\begin{theorem}\label{tm:glavni2}
Assume (\ref{asss2}) and that $h\ll\e^2$ and that $W$
satisfies Assumption \ref{ass:main}. Let $(\ua^h)\subset H^1(S^h;\R^3)$ satisfy
\begin{equation} \label{uuuuuvjet}
\limsup_{h \to 0} \tfrac{1}{h^4} E^h(\ua^h) < \infty,
\end{equation}
where
\begin{eqnarray*}
E^h(\bs u^h)&=&\frac{1}{h}\int_{S^h} \bar{W}(\Phi^h(x), \ra(x)/\e , \nabla \ua^h)dx.
\end{eqnarray*}
Define $\ya^h\in H^1(S^{1};\R^3)$ by the equation
$
\bar{\ya}^h (\Phi^h(x))=\ua^h(x)
$.
Then the following are true
\begin{enumerate}[(i)]
\item \textit{(compactness)}. There exists a subsequence of $(\bar{\ya}^h)$, still denoted by $(\bar{\ya}^h)$ and there exist $\bs Q^h \in \SO 3$ and  $ c^h \in \R^3$ such that the sequences
$\ya^h:=(\bs Q^h)^T\bar{\ya}^h-\bs c^h$ and
$\V^h :=\frac{1}{h} \left(\int_I \ya^h(\hx+t\na(\hx)) \ud t-x\right)$
satisfy the following
\begin{enumerate}[(a)]
\item $\ya^h \to \pi$ strongly in $H^1(S^{1};\R^3)$.
\item There exists an infinitesimal bending $V \in H^2(S;\R^3)$ of $S$ such that
$
\V^h \to \V \text{ strongly in } H^1(S;\R^3).
$
\item
There exists $B_w \in L^2(S;\mathbb{S})$ such that
 $$\tfrac{1}{h} q_{\V^h} \rightharpoonup \B_w  \text{ weakly in } L^2(S; \mathbb{S}).$$
    \end{enumerate}
\item   \textit{(lower bound)}. Defining $I_0^{2,c}$ by (\ref{eq:functional}) we have
$$
\liminf_{h \to 0} \tfrac{1}{h^4} E^h (\ua^h) \geq I_0^{2,c} (\V,\tilde{\B}_w),
$$
\item \textit{(recovery sequence)} For any infintesimal bending $\V \in H^2(S^{1},\R^3)$
and $\B_w \in L^2(S;\mathbb{S})$ there exists $\ua^h \in H^1(S^h;\R^3)$
satisfying (\ref{uuuuuvjet}) and such that conclusions of part (i) are true with $\bs Q^h=\I$ and $\bs c^h=0$.
Moreover, equality holds in (ii).
\end{enumerate}
\end{theorem}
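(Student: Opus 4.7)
The plan has three parts matching the theorem statement. Part (i) is essentially a citation of Lemma \ref{lm:100}: the FJM compactness argument is insensitive to the ratio of $h$ to $\e$, so the subsequences $y^h, V^h$ and the limits $V \in H^2(S;\R^3)$, $B_w \in L^2(S;\mathbb{S})$ are taken directly from that lemma.

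For the lower bound (ii), I would apply Proposition \ref{tm:ggggg} with $\gamma=0$ to identify a two-scale limit $E$ of the strain, of the form
\begin{equation*}
E=\bigl(B_w+\dot B+\tfrac{1}{2}(dV)^2-t\,b_V\bigr)(T_S,T_S)-t\sum_{i,j=1}^{2}(\partial^2_{y_iy_j}\varphi)\,\tau^i\otimes\tau^j+\mathcal U_0(\zeta,g),
\end{equation*}
where $\dot B,\varphi$ come from the two-scale limits in Proposition \ref{tm:1}. In the regime $h\ll\e^2$, Proposition \ref{tm:1}(iii) gives $\sum_{i,j=1}^{2}(\sym\nabla_y v)_{ij}\tau^i\otimes\tau^j+w_{1,3}\AA=0$; the convexity (\ref{asss2}) via the uniqueness in Lemma \ref{lm:anna100} then forces $v\equiv 0$ and $w_{1,3}\equiv 0$, whence $\varphi\equiv 0$ and the $t$-dependent oscillatory term drops out of $E$. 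Since $\dot B+\sym\nabla_y\zeta\in\dot L^2(\calY;\mathbb{S})$, the remaining expression for $E$ fits the structure $q^1+tq^2+L_0^{2,c}$ with $q^1=B_w+\tfrac{1}{2}(dV)^2$ and $q^2=-b_V$. Two-scale lower semicontinuity (Lemma \ref{lm:iggg11111}) and the pointwise infimum in the definition of $\mathcal Q_0^{2,c}$ then yield $\liminf h^{-4}E^h(u^h)\geq I_0^{2,c}(V,B_w)$.

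For the recovery sequence (iii) I would adapt Proposition \ref{prop:peter111}. Given $V,B_w$, approximate $V$ by smooth $v^\delta$ via Proposition \ref{P:FJM} and $B_w$ by $q_{w^\delta}$ with smooth $w^\delta$ (possible because $\mathcal{B}=L^2(S;\mathbb{S})$ under (\ref{asss2})). For each $\delta$, pick a near-optimal pair $(\dot B^\delta,g^\delta)$ in $L_0^{2,c}$ for the cell formula defining $\mathcal Q_0^{2,c}$; take $\dot B^\delta$ in the dense subspace $FL(S;\dot C^\infty(\calY;\mathbb{S}))$ and apply Lemma \ref{lm:anna100} to decompose $\dot B^\delta=\sum_{i,j=1}^{2}(\sym\nabla_y\eta^\delta)_{ij}\tau^i\otimes\tau^j+\varphi^\delta\AA$ with $\eta^\delta,\varphi^\delta$ smooth. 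The term $(\e^2/h)\varphi^\delta(\cdot,r/\e)\,n$ used in the $h\sim\e^2$ case of Proposition \ref{prop:peter111}, which blows up when $h\ll\e^2$, is replaced by the unscaled $\varphi^\delta(\cdot,r/\e)\,n$ added to $w^{\delta,h}$; this is consistent because $h/\e^2\to 0$ keeps $\|V^h_s\|_{H^2}$ uniformly bounded. A paired tangential oscillation $\e\,\eta^\delta_\beta(\cdot,r/\e)\,\partial_\beta\psi$ is added so that $q_{V^h_s}/h$ two-scale converges to $B_w+\dot B^\delta$, and the correctors $p^\delta,o^\delta$ are chosen so as to cancel the residual $\e^{-1}$ contributions to $\nabla_h y^{\delta,h}$ produced by the normal oscillation. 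Lemma \ref{lem:3} then identifies the strong two-scale limit of the approximate strain as the pointwise optimiser of $\mathcal Q_0^{2,c}$, and Lemma \ref{L:attouche} extracts a diagonal $\delta_h\to 0$ such that $\lim h^{-4}I^h(y^h)=I_0^{2,c}(V,B_w)$.

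The central difficulty is in (iii): realising the full range $\dot L^2(\calY;\mathbb{S})$ of admissible $\dot B^\delta$, which in the convex case necessarily contains a non-trivial $\varphi^\delta\AA$ component, while keeping the rescaled strain in $L^2$. The condition $h\ll\e^2$ is exactly what makes normal oscillations of $V^h_s$ at amplitude $h$ compatible with the $H^2$ bound, and the convexity via Lemma \ref{lm:anna100} identifies the unique coupling of $\eta^\delta$ with $\varphi^\delta$ that enables the cancellation of spurious strain terms. This pairing is what fails in the non-convex setting and explains the restriction of Theorem \ref{tm:glavni2} to convex shells.
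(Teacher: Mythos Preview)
Your proposal is correct and follows the same route as the paper: compactness via Lemma~\ref{lm:100}; the lower bound by invoking Proposition~\ref{tm:ggggg} and then using Proposition~\ref{tm:1}(iii) together with the uniqueness part of Lemma~\ref{lm:anna100} to force $\varphi=w_{1,3}=0$; and the recovery sequence by perturbing $w^{\delta,h}$ with $\varphi^\delta(\cdot,r/\e)\,n+\e\,z^\delta_\alpha(\cdot,r/\e)\,\tau^\alpha$, where $(z^\delta,\varphi^\delta)$ solve the system in Lemma~\ref{lm:anna100} for a smooth approximation $\dot B^\delta$ of the optimal oscillatory stretch. One small correction to your description of (iii): no corrector $o^\delta$ is needed to ``cancel $\e^{-1}$ contributions'' --- the $O(\e^{-1})$ part of $\nabla\tilde w^{\delta,h}$ lies in the antisymmetric part of $\Omega_{\tilde w^{\delta,h}}$ and is harmless for Lemma~\ref{lem:3} (it only affects $h^2\|G^h\|_{L^\infty}$ and $h\|G^h\|_{L^4}$, both of which vanish since $h/\e\to 0$), while the symmetric part $q_{\tilde w^{\delta,h}}$ is bounded precisely by the coupling in Lemma~\ref{lm:anna100}; the only role of $p^\delta$ is, as in the other regimes, to supply the $g^\delta$ and the $A_\delta^2$ correction.
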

\begin{proof}
We will only give the sketch of the proof since it is analogous to the previous cases.
Since $V$ is an infinitesimal bending, we have
\begin{equation} \label{anna111}
\partial_{\taub} \V(\hx)=\A(\hx) \taub, \text{ for all } \taub
\in T_{\hx} S \text{ for some } \A \in H^1(S;\so 3),
\end{equation}
Let us assume as in
Proposition \ref{tm:ggggg} that
$$
\nabla_{\ttan} \nabla_{\ttan} (\V^h_s \cdot \na) \wtto \nabla_{\ttan} \nabla_{\ttan}(V \na)+ \sum_{i,j=1}^2 (\partial^2_{y_iy_j} \varphi)\taub^i \otimes \taub^j,
$$
for some $\varphi \in L^2(S;\dot{H}^2(\calY)$. Using (iii) of Proposition \ref{tm:1} as well as Lemma \ref{lm:anna100} we conclude that $\varphi=0$. Thus from
Proposition \ref{tm:ggggg} we conclude that for $\bs E^h$ defined in (\ref{p36-1444}) we have $\bs E^h \wtto \bs E(x,y)$ where
\begin{eqnarray}\label{eq:anna235}
\bs E &=&\B_{w}+\dot{B}+\tfrac{1}{2} (dV)^2 -tb_V+\bs U,
\end{eqnarray}
for some   $\bs U \in  L^2(S; L_0(I\times\mathcal Y))$ and $\dot{B} \in L^2(S;\dot{L}^2(\calY;\mathbb{S}))$. The lower bound easily follows from Lemma \ref{lm:iggg11111} and the definition of the functional $I_0^{2,c}$.

To prove the upper bound we follow the proof of Proposition \ref{prop:peter111}, the case $\gamma=0$. Namely, let us again take   $\bs A^{\delta} \in C^2 (S; \so 3)$  such that $\lim_{\delta \to 0}\|\bs A^{\delta}-\bs A\|_{H^1}=0$ and  $\dot{\B}^\delta$ such that for every $i,j=1,2$, $(\dot{\B}^\delta)_{ij} \in FL(S;\dot{C}^\infty(\calY))$,  $\bs g^\delta \in C^1(S;C^1(I \times \calY;\R^3))$ and
\begin{equation}
\lim_{\delta \to 0}\left\| \mathcal{U}^{2,c}_0(\dot{\B}^\delta, \bs g^\delta)-\Pi_0^{2,c} \big(\B_w+\tfrac{1}{2} (dV)^2,-b_V\big)\right\|_{L^2(S;\R^{3 \times 3})}=0.
\end{equation}
By Lemma \ref{lm:anna100} there exist
$\bs z^\delta \in (FL(S;\dot{C}^\infty(\calY)))^2$ and $\varphi^\delta \in FL(S;\dot{C}^\infty(\calY) $
 solving the system
 \begin{equation}
\sum_{i,j=1,2}\big(\sym \nabla_y \bs z^\delta \big)_{ij}\taub^i \otimes \taub^j+\varphi^\delta \AA_{ij}=\dot{\B}^{\delta}.
\end{equation}

We define
\begin{eqnarray*}
&& \tilde{\va}^{\delta,h}=\va^{\delta,h}, \\
&& \tilde{\wa}^{\delta,h}=\wa^{\delta,h} + \varphi^\delta(\cdot,\ra/\e) \na+\eh \big(\bs z^\delta_1(\cdot,\ra/\e)\taub^1+\bs z^\delta_2(\cdot,r/\e)\taub^2\big), \\
&& p^{\delta}(x, y) = t(x)\left( \frac{|A_{\delta}(x) n(x)|^2}{2}\ I + A_{\delta}^2(x) \right)n(x) +2 \int_0^{t(x)} g_\alpha^{\delta}(\pi(x), s, y)\ ds\ \tau^\alpha(x) \\ & &\hspace{+5ex}+\int_0^{t(x)} g_3^{\delta}(\pi(x), s, y)\ ds\ \tau^3(x).
\end{eqnarray*}
where $\va^{\delta,h}$ is defined in (\ref{eq:16}) and $\wa^{\delta,h}$ is defined by the property (\ref{eq:jelgotovo}).

Notice that, similarly as before:
\begin{align}
\e\left( \nabla\t w^{\delta, h} - \nabla w^{\delta, h} \right) &  \mbox{ bounded in }L^{\infty}(S)
\\
\e\left( \Omega_{w^{\delta,h}}-\Omega_{\tilde{w}^{\delta,h}} \right) & \mbox{ bounded in }L^{\infty}(S)
\\
\e\left(b_{\tilde{w}^{\delta,h}}-\left(b_{w}^{\delta,h}+ \tfrac{1}{\e^2}\sum_{i,j=1}^2 \partial_{y_i y_j} \varphi^\delta \taub^i \otimes \taub^j\right) \right) &  \mbox{ bounded in }L^{\infty}(S) \\
\|q_{\tilde{\wa}^{\delta,h}}\|_{L^2}&  \mbox{ bounded independately of } \delta,h \\
\label{citat4}\tfrac{1}{\e} \left(q_{\tilde{\wa}^{\delta,h}} - \left(q_{\wa^{\delta,h}}+\dot{\B}^\delta\right) \right)&  \mbox{ bounded in }L^{\infty}(S).
\end{align}

Now we continue as in the proof of Proposition \ref{prop:peter111}, after concluding that $\tilde{\wa}^{\delta,h}$ satisfies the condition (\ref{eq:15}).
Boundedness of the right hand side of (\ref{eq:iggg2122}) follows easily.
 It can be easily seen that (\ref{eq:15}), (\ref{citat2}) are valid. Instead of (\ref{irre-4}) we have (\ref{citat4}).
\end{proof}

\appendix

\section{Auxiliary results}
In the sequel we consider the sequence $\eh \to 0$ as $h \to 0$ and $\Omega \subset \R^3$ a Lipschitz domain and $\taub_i=\ee_i$,  for $i=1,2,3$, where $\ee_i$ are standard coordinate vectors.
The set $\mathcal{Y}$ can be considered as the set $[0,1)^3$ i.e. $[0,1)^2$ with the topology of torrus.
The claims can be trivially extended to $\R^n$. For the proofs see e.g. \cite{Allaire-92}. For the last claim see \cite[Lemma~3]{Velcic-12}.
\begin{lemma}\label{L:two-scale}
  \begin{enumerate}[(i)]
  \item Any sequence that is bounded in $L^2(\Omega)$ admits a two-scale
    convergence subsequence.
  \item Let $f \in L^2(\Omega \times \calY)$ and $\{f^h\}_{h>0} \subset L^2(\Omega)$ be such that $f^h\wtto f(x,y)$
    weakly. Then $f^h\wto\int_Y f(\cdot,y)\,dy$ weakly in $L^2(\Omega)$.
  \item Let $f^0 \in L^2(\Omega)$ and $\{f^h\}_{h>0} \subset L^2(\Omega)$ be such that $f^h\wto
    f^0$ weakly in
    $L^2$. Then (after passing to subsequences) we have $f^h\wtto f^0(x)+\t f(x,y)$ for some $\t f\in L^2(\Omega\times
    Y)$ with $\int_Y\t f(\cdot,y)\,dy=0$ almost everywhere in $\Omega$.
  \item Let $f^0 \in L^2(\Omega)$ and $\{f^h\}_{h>0} \subset L^2(\Omega)$ be such that $f^h\to f^0$ strongly in
    $L^2$. Then $f^h\stto f^0(x)$.
  \item Let $f^0 \in H^1(\Omega)$ and $\{f^h\}_{h>0} \subset H^1(\Omega)$ be such that $f^h\wto f^0$ weakly in $H^1$.
    Then (after passing to subsequences)
    \begin{equation*}
      \nabla f^h\wtto \nabla f^0+\nabla_y\phi(x,y)
    \end{equation*}
    for some $\phi\in L^2(\Omega, \dot{H}^1(\mathcal Y))$.
  \item Let $f^0 \in H^2(\Omega)$ and $\{f^h\}_{h>0} \subset H^2(\Omega)$ be such that $f^h\wto f^0$ weakly in $H^2$.
    Then (after passing to subsequences)
    \begin{equation*}
      \nabla^2 f^h\wtto \nabla^2 f^0+\nabla^2_y\phi(x,y)
    \end{equation*}
    for some $\phi\in L^2(\Omega, \dot{H}^2(\mathcal Y))$.
  \end{enumerate}
\end{lemma}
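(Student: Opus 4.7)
The plan is to follow the classical Nguetseng–Allaire framework, treating items (i)--(iv) as essentially direct consequences of the definition and reserving the real work for the corrector statements (v) and (vi).

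For part (i) I would use the standard duality argument. Define the linear functional
$T_h(\psi) = \int_\Omega f^h(x)\psi(x, x/\eh)\, dx$
on the separable space $L^2(\Omega; C(\calY))$. By the bound on $\|f^h\|_{L^2}$ and the elementary estimate $\|\psi(\cdot,\cdot/\eh)\|_{L^2(\Omega)}\le \|\psi\|_{L^2(\Omega;C(\calY))}$, the sequence $T_h$ is bounded in the dual, so after extracting a subsequence it converges weakly-$*$ to some functional that, by the Riesz representation, is represented by $f\in L^2(\Omega\times\calY)$. Parts (ii) and (iv) follow by choosing test functions that depend only on $x$, respectively by expanding $\|f^h-f^0\|_{L^2}^2$ and using the definition of strong two-scale convergence. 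Part (iii) is obtained by combining (i) (to get two-scale convergence to some $\tilde g$ along a subsequence) with (ii) (which forces $\int_\calY \tilde g(\cdot,y)\,dy = f^0$), and then setting $\tilde f = \tilde g - f^0$.

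For (v), the key observation is that $\eh \nabla f^h$ is bounded in $L^2$, so $\eh \nabla f^h \wtto 0$ and in fact the two-scale limit of $\nabla f^h$ (which exists along a subsequence by (i)) must have the correct structural form. Concretely, test $\nabla f^h$ against $\psi(x, x/\eh)$ where $\psi \in C_c^\infty(\Omega; C^\infty(\calY; \R^3))$ is chosen divergence-free in $y$, i.e. $\operatorname{div}_y \psi = 0$. Integration by parts gives
\begin{equation*}
  \int_\Omega \nabla f^h \cdot \psi(x, x/\eh)\, dx = -\int_\Omega f^h \left(\operatorname{div}_x \psi\right)(x, x/\eh)\, dx - \tfrac{1}{\eh}\int_\Omega f^h (\operatorname{div}_y \psi)(x, x/\eh)\, dx.
\end{equation*}
The last term vanishes by choice of $\psi$, and the first converges to $\iint_{\Omega\times \calY} f^0 \operatorname{div}_x \psi\, dy\, dx = -\iint_{\Omega\times \calY} \nabla f^0 \cdot \psi\, dy\, dx$. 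Hence if $g$ is the two-scale limit of $\nabla f^h$, the field $g - \nabla f^0$ is $L^2$-orthogonal to all $\operatorname{div}_y$-free test fields; by the standard Helmholtz-type decomposition on $\calY$ this means $g - \nabla f^0 = \nabla_y \phi$ for some $\phi \in L^2(\Omega; \dot H^1(\calY))$, which is the claim.

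Part (vi) is the main obstacle because the natural space of test fields is more constrained. I would argue as follows: by (v) applied componentwise to $\nabla f^h$, there is $\phi\in L^2(\Omega;\dot H^1(\calY;\R^3))$ with $\nabla^2 f^h \wtto \nabla^2 f^0 + \nabla_y \phi$, where $\nabla_y\phi$ must be symmetric and curl-free in $y$. Symmetry forces $\phi = \nabla_y \Phi$ (in the sense of distributions on $\calY$) for some $\Phi\in L^2(\Omega;\dot H^2(\calY))$; equivalently, testing $\nabla^2 f^h$ against $\Psi(x, x/\eh)$ with $\Psi$ a symmetric matrix field satisfying $\operatorname{curl}_y\operatorname{curl}_y \Psi = 0$ and integrating by parts twice (using $\eh^2 \|\nabla^2 f^h\|_{L^2}$ bounded in a weak sense via the bound on $f^h$ in $H^2$) isolates the corrector as a Hessian in $y$. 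One then applies Lemma \ref{lm:1} (which is stated precisely to handle such constant-coefficient second-order conditions on $\calY$) to conclude that the corrector has the form $\nabla_y^2 \Phi$, giving the desired representation.
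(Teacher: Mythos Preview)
The paper does not actually prove this lemma: the surrounding text simply refers the reader to Allaire \cite{Allaire-92} for parts (i)--(v) and to \cite[Lemma~3]{Velcic-12} for part (vi). Your sketch for (i)--(v) is precisely the classical Nguetseng--Allaire argument one finds in those references, so you are faithfully reconstructing what the paper delegates to the literature.

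For (vi), your first line of reasoning is correct and self-contained: apply (v) to each $\partial_i f^h$ (bounded in $H^1$) to get, on a common subsequence, $\nabla^2 f^h \wtto \nabla^2 f^0 + (\partial_{y_j}\phi_i)_{ij}$ with $\phi_i\in L^2(\Omega;\dot H^1(\calY))$; symmetry of the two-scale limit forces $\partial_{y_j}\phi_i=\partial_{y_i}\phi_j$, and on the torus a mean-zero curl-free field is a gradient, so $\phi=\nabla_y\Phi$ with $\Phi\in L^2(\Omega;\dot H^2(\calY))$. Your ``equivalently'' clause invoking Lemma~\ref{lm:1}, however, is misplaced: that lemma characterizes symmetric fields orthogonal to $\cof\nabla_y^2 F$ as being of the form $B_0+\sym\nabla_y w$, i.e.\ symmetrized gradients, not Hessians $\nabla_y^2\Phi$. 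It therefore does not deliver the conclusion you need here; you should drop that alternative and keep the symmetry argument, which is already sufficient.
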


For brevity we shall write $\e$ instead of $\eh$.
At several places in our argument we are only interested in the
oscillatory part of the two-scale limit.
In the following, we introduce as in \cite{Horneuvel12} the special
notation $\otto$ for that purpose. As a motivation consider a
sequence $\{f^h\}_{h>0} \subset L^2(\Omega)$ with weak two-scale limit $f \in
L^2(\Omega\times Y)$. Consider
\begin{equation*}
  f^0(x):=\int_Yf(x,y)\,dy\qquad\text{and}\qquad \t f(x,y)=f(x,y)-f^0(x).
\end{equation*}
According to Lemma~\ref{L:two-scale} the function $f^0$ is the weak limit of $f^h$.
We call $\t f(x,y)$ the oscillatory part of $f$. Evidently we have
\begin{multline}\label{eq:ren}
  \lim\limits_{h\to 0}\int_\Omega
  f^h(x)\varphi(x)g(\tfrac{x'}{\e})\,dx=\iint_{\Omega\times Y} \t f(x,y)\varphi(x)g(y)\,dy\,dx\\
  \text{for all $\varphi\in C^\infty_0(\Omega)$ and $g\in C^\infty(\mathcal
    Y)$ with $\int_Yg\,dy=0$.}
\end{multline}
Motivated by that we  introduce the following vocabulary (see \cite{Horneuvel12}):
\begin{definition}
  For a sequence $(f^h)_{h>0}\subset L^2(\Omega)$ and $\t f\in
  L^2(\Omega\times Y)$ with $\int_Y\t f(\cdot,y)\,dy=0$ almost everywhere in $\Omega$ we write
  $$
  f^h \otto \t f(x,y),
  $$
  if \eqref{eq:ren} holds for all  $\varphi\in C^\infty_0(\Omega)$ and $g\in C^\infty(\mathcal
    Y)$ with $\int_Yg\,dy=0$.
\end{definition}

\begin{lemma}
  \label{L:osc}
  Let $f^0$ and $f^h\in L^2(\Omega)$ be such that $f^h\wto f^0$ weakly in
  $L^2(\Omega)$ and $f^h\otto \t f(x,y)$. Then $f^h\wtto f^0(x)+\t
  f(x,y)$ weakly two-scale.
\end{lemma}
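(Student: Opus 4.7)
The plan is to split an arbitrary admissible test function into its $y$-mean and an oscillatory remainder, treat the mean by the weak $L^2$-convergence hypothesis and the remainder by the oscillatory two-scale hypothesis, and then bootstrap from separable test functions to all of $C_c^\infty(\Omega;C(\calY))$ by a density argument.

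Concretely, I would fix $\psi\in C_c^\infty(\Omega;C(\calY))$ and decompose
$$\psi(x,y)=\psi_0(x)+\widehat\psi(x,y),\qquad \psi_0(x):=\int_{\calY}\psi(x,y)\,\ud y,$$
so that $\psi_0\in C_c^\infty(\Omega)$ and $\int_{\calY}\widehat\psi(x,y)\,\ud y=0$ for every $x$. Since $\int_{\calY}\t f(x,\cdot)\,\ud y=0$ almost everywhere, the target identity of weak two-scale convergence decouples into the $\psi_0$-part
$$\int_\Omega f^h(x)\psi_0(x)\,\ud x\ \longrightarrow\ \int_\Omega f^0(x)\psi_0(x)\,\ud x=\iint_{\Omega\times\calY}\bigl(f^0+\t f\bigr)\psi_0\,\ud y\,\ud x,$$
which is immediate from $f^h\wto f^0$ in $L^2(\Omega)$, and the $\widehat\psi$-part
$$\int_\Omega f^h(x)\widehat\psi(x,x'/\e)\,\ud x\ \longrightarrow\ \iint_{\Omega\times\calY}\t f(x,y)\widehat\psi(x,y)\,\ud y\,\ud x,$$
which is what remains to be proved.

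For the latter, the hypothesis $f^h\otto\t f$ provides exactly this identity whenever $\widehat\psi$ is of separable form $\varphi(x)g(y)$ with $\varphi\in C_c^\infty(\Omega)$, $g\in C^\infty(\calY)$, and $\int_{\calY}g\,\ud y=0$. Given $\eta>0$, by Stone--Weierstrass applied to the algebra generated by tensor products one can uniformly approximate $\psi$ by a finite tensor sum; subtracting off the $y$-mean of each $y$-factor produces a finite sum $\sum_{i=1}^N\varphi_i(x)g_i(y)$ with $\int_{\calY}g_i=0$ that approximates $\widehat\psi$ within $\eta$ in sup-norm. Since weakly convergent sequences are norm-bounded, $\{f^h\}$ is bounded in $L^2(\Omega)$, so Cauchy--Schwarz yields
$$\Bigl|\int_\Omega f^h(x)\bigl[\widehat\psi-{\textstyle\sum_i}\varphi_i g_i\bigr](x,x'/\e)\,\ud x\Bigr|\ \leq\ C\,\eta\quad\text{uniformly in }h,$$
and an analogous bound using $\|\t f\|_{L^2(\Omega\times\calY)}$ controls the corresponding error on the right-hand side. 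Applying $\otto$ summand-by-summand, passing $h\to 0$, and then $\eta\to 0$ closes the argument and, together with the $\psi_0$-step, establishes $f^h\wtto f^0(x)+\t f(x,y)$.

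The only mildly delicate ingredient is the density step that upgrades $\otto$ from mean-zero separable test functions to all of $\{\widehat\psi\in C_c^\infty(\Omega;C(\calY)):\int_{\calY}\widehat\psi\,\ud y=0\}$; the rest is linear bookkeeping on the two hypotheses, and I anticipate no substantive obstacle.
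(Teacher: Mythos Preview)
Your argument is correct and is precisely the natural way to make the claim rigorous; the paper's own proof consists of the single word ``Straightforward'', and your mean/oscillatory decomposition together with the tensor-density step is exactly what that word is meant to encode.
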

\begin{proof}
  Straightforward.
\end{proof}

 The
following Lemma was needed in the proof of Proposition \ref{tm:1}.
\begin{lemma}\label{L:two-scale-h1}
  \begin{enumerate}[(i)]
 \item  Let $f^0$ and $f^h\in H^1(\Omega)$ be such that $f^h\wto f^0$ weakly in
  $H^1(\Omega)$ and assume that
  \begin{equation*}
    \nabla f^h\wtto \nabla f^0 + \nabla_y\phi(x,y)
  \end{equation*}
  for some $\phi\in L^2(\Omega; \dot{H}^1(\mathcal Y))$. Then
  \begin{equation*}
    \frac{f^h}{\e}\otto \phi.
  \end{equation*}

\item Let $f^0$ and $f^h\in H^1(\Omega)$ be such that $f^h\wto f^0$ weakly in
  $H^2(\Omega)$ and assume that
  \begin{equation*}
    \nabla^2 f^h\wtto \nabla^2 f^0 + \nabla^2_y\phi(x,y)
  \end{equation*}
  for some $\phi\in L^2(\Omega;\dot{H}^2(\mathcal Y))$. Then
  \begin{equation*}
    \frac{f^h}{\e^2}\otto \phi.
  \end{equation*}
\end{enumerate}

\end{lemma}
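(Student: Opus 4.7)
The plan is to prove both parts by the same integration-by-parts scheme: transfer one (resp.\ two) derivatives from the oscillating factor $g(x/\varepsilon)$ onto $f^h$, so that the potentially singular prefactor $1/\varepsilon$ (resp.\ $1/\varepsilon^2$) is absorbed and the resulting limit becomes a two-scale limit of $\nabla f^h$ (resp.\ $\nabla^2 f^h$) tested against a smooth periodic weight of zero mean.

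For part (i), fix $\varphi\in C_0^\infty(\Omega)$ and $g\in C^\infty(\calY)$ with $\int_\calY g\,dy=0$. By Fourier expansion on the torus one constructs $G\in C^\infty(\calY;\R^m)$ satisfying $\sum_j\partial_{y_j}G_j=g$ and $\int_\calY G_j\,dy=0$; the second property is what keeps spurious mean corrections out of the final limit. Then $g(x/\varepsilon)=\varepsilon\sum_j\partial_{x_j}[G_j(x/\varepsilon)]$, and integrating by parts (no boundary terms, since $\varphi$ has compact support) yields
\[
\int_\Omega\frac{f^h}{\varepsilon}\,\varphi\,g(x/\varepsilon)\,dx
=-\int_\Omega\varphi\,\nabla f^h\cdot G(x/\varepsilon)\,dx
-\int_\Omega f^h\,\nabla\varphi\cdot G(x/\varepsilon)\,dx.
\]
The second integral vanishes: Rellich--Kondrachov gives $f^h\to f^0$ strongly in $L^2$, while $\nabla\varphi\cdot G(x/\varepsilon)\wto 0$ weakly in $L^2$ because $\int_\calY G=0$. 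In the first integral, the hypothesis $\nabla f^h\wtto \nabla f^0+\nabla_y\phi$ allows passage to the two-scale limit: the $\nabla f^0$ contribution drops out by $\int_\calY G=0$, and the remainder $\iint\nabla_y\phi\cdot\varphi\,G\,dy\,dx$ becomes $-\iint\phi\,\varphi\,g\,dy\,dx$ after one integration by parts in $y$ on the closed manifold $\calY$. Altogether, the limit equals $\iint\phi\,\varphi\,g\,dy\,dx$, which is precisely $f^h/\varepsilon\otto\phi$.

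For part (ii), one chooses, again via Fourier, smooth periodic $(G_{ij})$, symmetric in $(i,j)$, with $\int_\calY G_{ij}\,dy=0$ and $\sum_{ij}\partial^2_{y_iy_j}G_{ij}=g$; a convenient explicit choice is $G_{ij}=\delta_{ij}\Psi$ with $\Delta_y\Psi=g$ and $\int_\calY\Psi=0$. Then $g(x/\varepsilon)=\varepsilon^2\sum_{ij}\partial^2_{ij}[G_{ij}(x/\varepsilon)]$, and two integrations by parts give
\[
\int_\Omega\frac{f^h}{\varepsilon^2}\,\varphi\,g(x/\varepsilon)\,dx
=\sum_{ij}\int_\Omega\partial^2_{ij}(f^h\varphi)\,G_{ij}(x/\varepsilon)\,dx.
\]
Leibniz produces a ``main'' term $\int\varphi\,\nabla^2 f^h:G(x/\varepsilon)\,dx$ together with cross terms of the form $\int\nabla f^h\cdot A^h\,dx$ and $\int f^h\,B^h\,dx$, where $A^h,B^h$ are $L^\infty$-bounded and converge weakly to $0$ in $L^2$ (because $\int G=0$). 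The main term is handled exactly as in (i): the hypothesis $\nabla^2 f^h\wtto\nabla^2 f^0+\nabla^2_y\phi$, together with $\int G=0$ and two integrations by parts on $\calY$, delivers $\iint\phi\,\varphi\,g\,dy\,dx$. The cross terms vanish because the Rellich--Kondrachov embedding $H^2\hookrightarrow H^1$ upgrades both $f^h\to f^0$ and $\nabla f^h\to\nabla f^0$ to strong convergence in $L^2$, and ``strong $\times$ weakly-null'' in $L^2$ is null in $L^1$.

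The main obstacle — and the reason a purely two-scale argument does not suffice — lies in the cross terms generated by the Leibniz rule: generically $\nabla^k f^h$ and the periodic weight both converge only weakly in $L^2$, and the product of two weakly convergent $L^2$-sequences need not pass to the limit. It is precisely the $H^k$-regularity built into the hypothesis which, via Rellich--Kondrachov, upgrades the lower-order derivatives $\nabla^{k-1} f^h$ to strong $L^2$-convergence and thereby closes the argument.
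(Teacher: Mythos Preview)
Your proof is correct and rests on the same device as the paper's: solve a periodic elliptic problem on $\calY$ so that the prefactor $\e^{-k}$ is absorbed as $k$ derivatives of a bounded oscillating test function, then integrate by parts $k$ times. For part (i) the two arguments are essentially identical. For part (ii) there is a minor organisational difference worth noting: the paper writes $\int_\Omega f^h\psi\,\Delta G^h = \int_\Omega (\Delta f^h)\,\psi\,G^h - 2\int_\Omega f^h\,(\nabla G^h\cdot\nabla\psi) - \int_\Omega f^h\,G^h\,\Delta\psi$, so that the middle cross term still carries $\nabla G^h\sim 1/\e$; it then invokes part (i) together with the observation that the $H^1$-corrector of $f^h$ vanishes (since $\nabla f^h\to\nabla f^0$ strongly by Rellich), obtaining $f^h/\e\otto 0$ to kill that term. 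Your version instead transfers both derivatives onto $f^h\varphi$, i.e.\ you write $\int_\Omega f^h\varphi\,\Delta G^h=\int_\Omega\Delta(f^h\varphi)\,G^h$ and expand by Leibniz, so that every cross term is tested against the bounded factor $G(\cdot/\e)$ and is handled directly by ``strong $L^2\times$ weakly null $L^2$''. This makes part (ii) self-contained and is arguably cleaner; the two decompositions are of course equivalent up to one further integration by parts on the offending cross term.
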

\begin{proof}

The proof of (i) is given in \cite[Lemma~3.7]{Horneuvel12}.
Here we prove (ii) which goes in an analogous way.  Let $G$ denote the unique solution in $C^\infty (\mathcal
  Y)$ to
  \begin{equation*}
    -\Delta_y G=g,\qquad \int_Y G\,dy=0.
  \end{equation*}
We put $G^h(x)=g(\tfrac{x'}{\e})$. Then
$\Delta_y G^h(x)=\frac{1}{\e^2} g(\tfrac{x}{\e}) $ and $\nabla G^h(x)=\tfrac{1}{\e} \nabla_y G(\tfrac{x}{\e})$.

\begin{eqnarray}
     \frac{1}{\e^2}\int_\Omega f^h(x)g(\frac{x}{\e^2})\psi(x)\,dx &=&
    \int_S f^h \Delta G^h \psi\, dx\\
    \nonumber
    &=&-\int_\Omega  \nabla f^h\cdot  \nabla (G^h\psi)\,dx
    -2\int_\Omega f^h( \nabla G^h\cdot \nabla\psi)\,dx
    \\ \nonumber & & -\int_\Omega f^h\,G^h  \Delta \psi\,dx \\
    \nonumber &=& \int_\Omega \Delta  f^h   (G^h\psi) \, dx
     -2\int_\Omega f^h( \nabla G^h\cdot \nabla\psi)\,dx
    \\ \nonumber & & -\int_\Omega f^h\,G^h  \Delta \psi\,dx \\ \nonumber
    &\to &\iint_{\Omega \times Y} \Delta_y \phi(x,y) G(y) \psi  \,dx\,dy\\ \nonumber
    &=& \iint_{\Omega \times Y}  \phi(x,y)  g(y) \psi  \,dx\,dy,
\end{eqnarray}
where we have used (i) for the claim $\tfrac{f^h}{\e} \otto 0$ that is used to conclude $\int_\Omega f^h( \nabla G^h\cdot \nabla\psi)\,dx\to 0$ .

\end{proof}

The following proposition can be found in \cite[Proposition~2]{FJM-06}
\begin{proposition}\label{P:FJM}
Let $\Omega$ be a bounded Lipschitz domain
in $\R^n$ and let $1 < p < \infty$, $k\in \mathbf{N}$ and $\lambda>0$. Suppose that $u\in W^{k,p}(\Omega)$ and
let
$$|u|_k(x) :=\sum_{|\alpha|\leq k} |\nabla^\alpha u(x)|.$$
Then there exists $u^\lambda \in W^{k,\infty}$ such that
\begin{eqnarray*}
\| u^\lambda\|_{W^{k,\infty}} &\leq& C(p,k,\Omega)\lambda, \\
\left| \{ x \in \Omega: \ u^\lambda(x) \neq u(x) \}\right| & \leq & \frac{C(p,k)}{\lambda^p} \int_{|u|_k\geq \lambda/2} |u|_k^p \ud x, \\
\| u^\lambda \|_{W^{k,p}} \leq C(p,k,\Omega)\|u\|_{W^{k,p}}.
\end{eqnarray*}
In particular
$$ \lim_{\lambda \to \infty} \lambda^p \left| \{ x\in \Omega: \ u^\lambda(x) \neq u(x) \}\right|=0,$$
and
$$ \lim_{\lambda \to \infty} \| u^\lambda-u\|_{W^{k,p}}=0. $$

\end{proposition}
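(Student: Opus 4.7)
The plan is to execute a classical Lipschitz truncation, reducing the $W^{k,p}$ case to a maximal function argument on $\R^n$ and then gluing via a McShane/Whitney type extension. First I would use a bounded extension operator for the Lipschitz domain $\Omega$ to replace $u$ by $\tilde u\in W^{k,p}(\R^n)$ with compact support and $\|\tilde u\|_{W^{k,p}(\R^n)}\leq C(\Omega)\|u\|_{W^{k,p}(\Omega)}$. All subsequent constructions will be carried out on $\R^n$ and restricted back to $\Omega$ at the end.

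Next I would introduce the Hardy--Littlewood maximal function $M$ and define the good set $G_\lambda:=\{x\in\R^n: M(|\tilde u|_k)(x)\leq \lambda/2\}$ together with its complement $B_\lambda$. The weak-type $(p,p)$ estimate for $M$ immediately gives the measure bound
\[
|B_\lambda|\leq \frac{C(p)}{\lambda^p}\int_{\{|\tilde u|_k\geq \lambda/4\}}|\tilde u|_k^p\,dx,
\]
which, after restriction to $\Omega$, yields the second estimate of the proposition (the pointwise threshold $\lambda/2$ can be tuned by an overall constant). The key analytic fact I will invoke on $G_\lambda$ is that Lebesgue points $x,y\in G_\lambda$ admit the pointwise inequality
\[
\Big|\tilde u(x)-\sum_{|\alpha|<k}\tfrac{1}{\alpha!}D^\alpha \tilde u(y)(x-y)^\alpha\Big|\leq C\lambda|x-y|^k,
\]
together with analogous inequalities for $D^\beta \tilde u$ with $|\beta|<k$; this is a standard Taylor-type estimate derived from the Poincar\'e inequality applied on concentric balls, comparing differences of Sobolev averages and bounding the tail by $M(|\tilde u|_k)$. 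Thus on $G_\lambda$ the jet $(D^\alpha \tilde u)_{|\alpha|\leq k-1}$ satisfies the compatibility conditions of Whitney's extension theorem with Lipschitz-type remainders of constant $\lesssim \lambda$.

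I then apply Whitney's extension theorem to the jet restricted to $\overline{G_\lambda}$ (or equivalently to its Lebesgue points) to produce $u^\lambda\in C^{k-1,1}(\R^n)\subset W^{k,\infty}(\R^n)$ with $\|u^\lambda\|_{W^{k,\infty}}\leq C(p,k,\Omega)\lambda$ and with $u^\lambda=\tilde u$ a.e.\ on $G_\lambda$. By construction the set $\{u^\lambda\neq u\}\cap\Omega$ is contained in $B_\lambda\cap\Omega$, so the measure estimate holds. For the $W^{k,p}$ bound I split the integral: on $G_\lambda$ we have $u^\lambda=\tilde u$ so the contribution is trivially controlled by $\|\tilde u\|_{W^{k,p}}^p$, while on $B_\lambda$ we use $|D^\alpha u^\lambda|\leq C\lambda$ together with the layer-cake identity $\int_{B_\lambda}\lambda^p\,dx=\int_0^\infty p\mu^{p-1}|\{M|\tilde u|_k>\mu\}\cap B_\lambda|\,d\mu$ combined with the strong $(p,p)$ bound for $M$, yielding $\int_{B_\lambda}|D^\alpha u^\lambda|^p\,dx\leq C\|\tilde u\|_{W^{k,p}}^p$.

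The two limit statements follow cheaply: the first is just the layer-cake integrability of $|u|_k^p$ (dominated convergence as $\lambda\to\infty$ of $\int_{\{|u|_k\geq \lambda/2\}}|u|_k^p\,dx$), and for the second we observe that $u-u^\lambda$ vanishes outside $\{u^\lambda\neq u\}$ whose measure tends to $0$, while the $W^{k,p}$ bound provides equi-integrability of $|u^\lambda|_k$; hence $\|u^\lambda-u\|_{W^{k,p}}\to 0$ by Vitali's convergence theorem. The main technical obstacle, and the step I would write out most carefully, is the pointwise Taylor-remainder inequality on Lebesgue points of $G_\lambda$, since this is where the order $k$ enters and one has to iterate the Poincar\'e inequality across dyadic balls while controlling all remainder terms by $M(|\tilde u|_k)$ rather than by $M$ applied to individual derivatives.
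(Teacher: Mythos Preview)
The paper does not give its own proof of this proposition; it simply quotes the statement from \cite[Proposition~2]{FJM-06}. Your sketch is the standard maximal-function/Whitney-extension argument (going back to Liu and Acerbi--Fusco) that underlies the proof in \cite{FJM-06}, and it is essentially correct. One small point to watch: after extending to $\tilde u$ on $\R^n$, the weak-type bound naturally produces $\int_{\{|\tilde u|_k\geq c\lambda\}}|\tilde u|_k^p\,dx$ over $\R^n$, whereas the stated estimate involves $\int_{\{|u|_k\geq\lambda/2\}}|u|_k^p\,dx$ over $\Omega$; to bridge this you need either the locality of Stein's extension (so that large values of $|\tilde u|_k$ off $\Omega$ are controlled by large values of $|u|_k$ on $\Omega$) or simply to observe that for the two ``in particular'' limits only the $\R^n$ version is needed, since $\tilde u\in W^{k,p}(\R^n)$ already forces the tail integral to vanish as $\lambda\to\infty$.
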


The following diagonalization lemma is due to \cite[Corollary 1.16]{Attouch-84}:
\begin{lemma}
  \label{L:attouche}
  Let $g:[0,\infty)\times[0,\infty)\to [0,\infty)$ and suppose that
  \begin{equation*}
    \limsup\limits_{\delta\to 0}    \limsup\limits_{h\to 0}g(\delta,h)=0.
  \end{equation*}
  Then there is a monotone function
  $(0,\infty)\ni h\mapsto\delta(h)\in(0,\infty)$ with $\lim_{h\to 0}\delta(h)=0$ and $\limsup_{h\to0}g(\delta(h),h)=0$.
\end{lemma}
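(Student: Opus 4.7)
The plan is to carry out a standard Cantor-style diagonal extraction. Define the auxiliary function
\[
\ell(\delta):=\limsup_{h\to 0}g(\delta,h),\qquad \delta\in(0,\infty).
\]
By hypothesis $\limsup_{\delta\to 0}\ell(\delta)=0$, so I can select a strictly decreasing sequence $\delta_n\searrow 0$ such that $\ell(\delta_n)<1/n$ for every $n\in\mathbb{N}$. The meaning of $\ell(\delta_n)<1/n$ together with the definition of $\limsup$ as $h\to 0$ lets me, for each $n$, choose a threshold $h_n>0$ such that
\[
\sup_{0<h\le h_n}g(\delta_n,h)<\frac{2}{n},
\]
and by replacing $h_n$ with $\min(h_n,h_{n-1}/2)$ I may assume in addition that $h_n\searrow 0$ strictly.

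Next I define $\delta:(0,\infty)\to(0,\infty)$ as the step function
\[
\delta(h)=\begin{cases}\delta_1 & \text{if }h>h_1,\\[0.4ex] \delta_n & \text{if }h_{n+1}<h\le h_n,\ n\ge 1.\end{cases}
\]
Since $\delta_n$ decreases with $n$ and the intervals $(h_{n+1},h_n]$ shift to smaller $h$ as $n$ grows, the function $h\mapsto\delta(h)$ is monotone non-decreasing, and $\delta(h)\to 0$ as $h\to 0$. Moreover, for $h\in(h_{n+1},h_n]$ we have $h\le h_n$ and $\delta(h)=\delta_n$, so by the choice of $h_n$,
\[
g(\delta(h),h)=g(\delta_n,h)<\frac{2}{n}.
\]
Letting $h\to 0$ forces $n\to\infty$, so $\limsup_{h\to 0}g(\delta(h),h)=0$, which is what is required.

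The argument is essentially routine; the only mild subtlety is ensuring simultaneous monotonicity of $\delta(h)$ and the requirement $\delta(h)\to 0$, which is handled automatically by using half-open intervals $(h_{n+1},h_n]$ and a strictly decreasing sequence $h_n\searrow 0$. No continuity or measurability of $g$ is needed since the construction uses only pointwise evaluations and the definition of $\limsup$.
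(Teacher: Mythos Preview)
Your proof is correct; the diagonal extraction via a strictly decreasing sequence $\delta_n\searrow 0$ with $\ell(\delta_n)<1/n$, paired thresholds $h_n\searrow 0$, and the resulting step function $\delta(h)$ is the standard argument and all steps are valid. The paper itself does not supply a proof of this lemma but merely cites \cite[Corollary~1.16]{Attouch-84}, so there is nothing to compare against beyond noting that your construction is exactly the routine proof one finds in that reference.
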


{\bf Acknowledgement.} Both authors were supported by Deutsche Forschungsgemeinschaft grant no. HO-4697/1-1. Part
of this work was completed while the second author was affiliated with BCAM.
\\

\bibliographystyle{alpha}

\begin{thebibliography}{FJMM03}

\bibitem[ABP91]{AcBuPe91}
Emilio Acerbi, Giuseppe Buttazzo, and Danilo Percivale.
\newblock A variational definition of the strain energy for an elastic string.
\newblock {\em J. Elasticity}, 25(2):137--148, 1991.

\bibitem[All92]{Allaire-92}
Gr{\'e}goire Allaire.
\newblock Homogenization and two-scale convergence.
\newblock {\em SIAM J. Math. Anal.}, 23(6):1482--1518, 1992.

\bibitem[Att84]{Attouch-84}
Hedy Attouch.
\newblock {\em Variational convergence for functions and operators}.
\newblock Applicable Mathematics Series. Pitman (Advanced Publishing Program),
  Boston, MA, 1984.

\bibitem[BB06]{Babadjian-Baia-06}
Jean-Fran{\c{c}}ois Babadjian and Margarida Ba{\'{\i}}a.
\newblock 3{D}--2{D} analysis of a thin film with periodic microstructure.
\newblock {\em Proc. Roy. Soc. Edinburgh Sect. A}, 136(2):223--243, 2006.

\bibitem[BFF00]{Braides-Fonseca-Francfort-00}
Andrea Braides, Irene Fonseca, and Gilles Francfort.
\newblock 3{D}-2{D} asymptotic analysis for inhomogeneous thin films.
\newblock {\em Indiana Univ. Math. J.}, 49(4):1367--1404, 2000.

\bibitem[Bra85]{Braides-85}
Andrea Braides.
\newblock Homogenization of some almost periodic coercive functional.
\newblock {\em Rend. Accad. Naz. Sci. XL Mem. Mat. (5)}, 9(1):313--321, 1985.

\bibitem[Cho97]{Choi-97}
Daniel Choi.
\newblock On geometrical rigidity of surfaces. {A}pplication to the theory of
  thin linear elastic shells.
\newblock {\em Math. Models Methods Appl. Sci.}, 7(4):507--555, 1997.

\bibitem[Cia00]{Ciarletshell00}
Philippe~G. Ciarlet.
\newblock {\em Mathematical elasticity. {V}ol. {III}}, volume~29 of {\em
  Studies in Mathematics and its Applications}.
\newblock North-Holland Publishing Co., Amsterdam, 2000.
\newblock Theory of shells.

\bibitem[CL96]{Ciarlet-Lods-96}
Philippe~G. Ciarlet and V{\'e}ronique Lods.
\newblock Asymptotic analysis of linearly elastic shells. {I}. {J}ustification
  of membrane shell equations.
\newblock {\em Arch. Rational Mech. Anal.}, 136(2):119--161, 1996.

\bibitem[CLM96]{Ciarlet-Lods-Miara-96}
Philippe~G. Ciarlet, V{\'e}ronique Lods, and Bernadette Miara.
\newblock Asymptotic analysis of linearly elastic shells. {II}. {J}ustification
  of flexural shell equations.
\newblock {\em Arch. Rational Mech. Anal.}, 136(2):163--190, 1996.

\bibitem[FJM02]{FJM-02}
Gero Friesecke, Richard~D. James, and Stefan M{\"u}ller.
\newblock A theorem on geometric rigidity and the derivation of nonlinear plate
  theory from three-dimensional elasticity.
\newblock {\em Comm. Pure Appl. Math.}, 55(11):1461--1506, 2002.

\bibitem[FJM06]{FJM-06}
Gero Friesecke, Richard~D. James, and Stefan M{\"u}ller.
\newblock A hierarchy of plate models derived from nonlinear elasticity by
  gamma-convergence.
\newblock {\em Arch. Ration. Mech. Anal.}, 180(2):183--236, 2006.

\bibitem[FJMM03]{FJMMshells03}
Gero Friesecke, Richard~D. James, Maria~Giovanna Mora, and Stefan M{\"u}ller.
\newblock Derivation of nonlinear bending theory for shells from
  three-dimensional nonlinear elasticity by {G}amma-convergence.
\newblock {\em C. R. Math. Acad. Sci. Paris}, 336(8):697--702, 2003.

\bibitem[GSP95]{Geymonat-Sanchez-95}
Giuseppe Geymonat and Enrique S{\'a}nchez-Palencia.
\newblock On the rigidity of certain surfaces with folds and applications to
  shell theory.
\newblock {\em Arch. Rational Mech. Anal.}, 129(1):11--45, 1995.

\bibitem[HNV]{Horneuvel12}
Peter Hornung, Stefan Neukamm, and Igor Vel{\v{c}}i{\'{c}}.
\newblock Derivation of the homogenized bending plate model from 3{D} nonlinear
  elasticity.
\newblock {\em Preprint:
  http://www.mis.mpg.de/preprints/2013/preprint2013\_23.pdf}.

\bibitem[Hor]{Hornung-CPDE}
Peter Hornung.
\newblock Continuation of infinitesimal bendings on developable surfaces and
  equilibrium equations for nonlinear bending theory of plates.
\newblock {\em to appear in Comm. Part. Differ. Equat.}

\bibitem[Hor12]{Hornung12}
Peter Hornung.
\newblock The {W}illmore functional on isometric immersions.
\newblock {\em MIS MPG Preprint}, 2012.

\bibitem[LDR95]{LDRa95}
Herv{\'e} Le~Dret and Annie Raoult.
\newblock The nonlinear membrane model as variational limit of nonlinear
  three-dimensional elasticity.
\newblock {\em J. Math. Pures Appl. (9)}, 74(6):549--578, 1995.

\bibitem[LDR96]{LDRa96}
H.~Le~Dret and A.~Raoult.
\newblock The membrane shell model in nonlinear elasticity: a variational
  asymptotic derivation.
\newblock {\em J. Nonlinear Sci.}, 6(1):59--84, 1996.

\bibitem[LMP10]{Lewicka1}
Marta Lewicka, Maria~Giovanna Mora, and Mohammad~Reza Pakzad.
\newblock Shell theories arising as low energy {$\Gamma$}-limit of 3d nonlinear
  elasticity.
\newblock {\em Ann. Sc. Norm. Super. Pisa Cl. Sci. (5)}, 9(2):253--295, 2010.

\bibitem[Lut85]{Lutoborski-85}
Adam Lutoborski.
\newblock Homogenization of thin elastic shell.
\newblock {\em Journal of Elasticity}, 15(1):69--87, 1985.

\bibitem[MM03]{MoMu03}
Maria~Giovanna Mora and Stefan M{\"u}ller.
\newblock Derivation of the nonlinear bending-torsion theory for inextensible
  rods by {$\Gamma$}-convergence.
\newblock {\em Calc. Var. Partial Differential Equations}, 18(3):287--305,
  2003.

\bibitem[MM04]{MoMu04}
Maria~Giovanna Mora and Stefan M{\"u}ller.
\newblock A nonlinear model for inextensible rods as a low energy
  {$\Gamma$}-limit of three-dimensional nonlinear elasticity.
\newblock {\em Ann. Inst. H. Poincar\'e Anal. Non Lin\'eaire}, 21(3):271--293,
  2004.

\bibitem[M{\"u}l87]{Mueller-87}
Stefan M{\"u}ller.
\newblock Homogenization of nonconvex integral functionals and cellular elastic
  materials.
\newblock {\em Arch. Rational Mech. Anal.}, 99(3):189--212, 1987.

\bibitem[Neu10]{Neukamm-10}
Stefan Neukamm.
\newblock {\em Homogenization, linearization and dimension reduction in
  elasticity with variational methods}.
\newblock Phd thesis, Tecnische Universit{\"a}t M{\"u}nchen, 2010.

\bibitem[Neu12]{Neukamm-11}
Stefan Neukamm.
\newblock Rigorous derivation of a homogenized bending-torsion theory for
  inextensible rods from three-dimensional elasticity.
\newblock {\em Arch. Ration. Mech. Anal.}, 206(2):645--706, 2012.

\bibitem[Ngu89]{Nguetseng-89}
Gabriel Nguetseng.
\newblock A general convergence result for a functional related to the theory
  of homogenization.
\newblock {\em SIAM J. Math. Anal.}, 20(3):608--623, 1989.

\bibitem[NV]{NeuVel-12}
Stefan Neukamm and Igor Vel{\v{c}}i{\'{c}}.
\newblock Derivation of a homogenized von {K}\'arm\'an plate theory from 3{D}
  elasticity.
\newblock {\em to appear in M3AS; Preprint:
  http://www.mis.mpg.de/preprints/2012/preprint2012\_61.pdf}.

\bibitem[Pan01]{Pa01}
Olivier Pantz.
\newblock Une justification partielle du mod\`ele de plaque en flexion par
  {$\Gamma$}-convergence.
\newblock {\em C. R. Acad. Sci. Paris S\'er. I Math.}, 332(6):587--592, 2001.

\bibitem[Sch07]{Schmidt-07}
Bernd Schmidt.
\newblock Plate theory for stressed heterogeneous multilayers of finite bending
  energy.
\newblock {\em J. Math. Pures Appl. (9)}, 88(1):107--122, 2007.

\bibitem[SP89a]{Sa89}
{\'E}variste Sanchez-Palencia.
\newblock Statique et dynamique des coques minces. {I}. {C}as de flexion pure
  non inhib\'ee.
\newblock {\em C. R. Acad. Sci. Paris S\'er. I Math.}, 309(6):411--417, 1989.

\bibitem[SP89b]{Sanchez-89}
{\'E}variste Sanchez-Palencia.
\newblock Statique et dynamique des coques minces. {II}. {C}as de flexion pure
  inhib\'ee. {A}pproximation membranaire.
\newblock {\em C. R. Acad. Sci. Paris S\'er. I Math.}, 309(7):531--537, 1989.

\bibitem[Vela]{Vel13}
Igor Vel{\v{c}}i{\'{c}}.
\newblock A note on the derivation of homogenized bending plate model.
\newblock {\em Preprint: http://arxiv.org/abs/1212.2594}.

\bibitem[Velb]{Velcic-12}
Igor Vel{\v{c}}i{\'{c}}.
\newblock Periodically wrinkled plate of {F}{\"o}ppl von {K}\'arm\'an type.
\newblock {\em to appear in Ann. Sc. Norm. Super. Pisa Cl. Sci., preprint:
  http://arxiv.org/abs/1104.0661}.

\bibitem[Vis06]{Visintin-06}
Augusto Visintin.
\newblock Towards a two-scale calculus.
\newblock {\em ESAIM Control Optim. Calc. Var.}, 12(3):371--397 (electronic),
  2006.

\bibitem[Vis07]{Visintin-07}
Augusto Visintin.
\newblock Two-scale convergence of some integral functionals.
\newblock {\em Calc. Var. Partial Differential Equations}, 29(2):239--265,
  2007.

\end{thebibliography}

\end{document}